\theoremstyle{plain}
\newtheorem{theorem}{Theorem}[section]
\newtheorem{lemma}[theorem]{Lemma}
\newtheorem{proposition}[theorem]{Proposition}
\newtheorem{corollary}[theorem]{Corollary}
\newtheorem{claim}[theorem]{Claim}
\newtheorem{assumption}[theorem]{Assumption}
\newtheorem{observation}[theorem]{Observation}
\theoremstyle{remark}
\newtheorem{remarky}[theorem]{Remark}
\newtheorem{definition}[theorem]{Definition}
\DeclareMathOperator*{\argmax}{arg\,max}
\DeclareMathOperator*{\argmin}{arg\,min}
\renewcommand{\P}{\mathbb{P}}
\newcommand{\E}{\mathbb{E}}
\def \toinp    {\buildrel {\P}\over{\longrightarrow}}
\DeclarePairedDelimiter\floor{\lfloor}{\rfloor}
\DeclarePairedDelimiter\ceil{\lceil}{\rceil}
\DeclareRobustCommand*{\ora}{\overrightarrow}
\DeclareRobustCommand*{\ola}{\overleftarrow}
\newcommand{\be}{\begin{equation}}
\newcommand{\ee}{\end{equation}}
\newcommand{\CA}{\mathcal{A}}
\newcommand{\CB}{\mathcal{B}}
\newcommand{\CE}{\mathcal{E}}
\newcommand{\CL}{\mathcal{L}}
\newcommand{\CP}{\mathcal{P}}
\newcommand{\CQ}{\mathcal{Q}}
\newcommand{\CV}{\mathcal{V}}
\newcommand{\CW}{\mathcal{W}}
\newcommand{\rd}{\mathrm{d}}
\newcommand{\re}{\mathrm{e}}
\newcommand{\ind}[1]{\mathds{1}_{\left\{{#1}\right\}}}
\newcommand{\ubar}[1]{\underaccent{\bar}{#1}}
\newcommand{\vareps}{\varepsilon}
\newcommand{\sss}[1]{{\scriptscriptstyle #1}}
\newcommand{\goodpathto}[2]{{\overset{[#1,#2)}{\leadsto}}}
\numberwithin{equation}{section}
\begin{document}
\begin{frontmatter}
 \title{Distance evolutions in growing preferential attachment graphs}
 \runtitle{Distance evolutions in PAMs}

 \begin{aug}
  \author[A]{\fnms{Joost} \snm{Jorritsma}\ead[label=e1, mark]{j.jorritsma@tue.nl}}
  \author[B]{\fnms{J\'ulia} \snm{Komj\'athy}\ead[label=e2] {J.Komjathy@tudelft.nl}}
  \address[A]{Department of Mathematics and Computer Science, Eindhoven University of Technology, \printead{e1}}
  \address[B]{Delft Institute of Applied Mathematics, Delft  University of Technology, \printead{e2}}
 \end{aug}

 \date{\today}
 \begin{keyword}[class=MCS2020]
  \kwd[Primary ]{05C80}
  \kwd[; secondary ]{05C82}
  \kwd{60C05}
 \end{keyword}
 \begin{keyword}
  \kwd{Evolution of graphs}
  \kwd{Preferential attachment}
  \kwd{Weighted-distance evolution}
  \kwd{First-passage percolation}
 \end{keyword}

 \begin{abstract}
  We study the evolution of the graph distance and weighted distance between two fixed vertices in dynamically growing random graph models. More precisely, we consider preferential attachment models with power-law exponent $\tau\in(2,3)$, sample two vertices $u_t,v_t$ uniformly at random when the graph has $t$ vertices, and study the evolution of the graph distance between these two fixed vertices as the surrounding graph grows. This yields a discrete-time stochastic process in $t'\geq t$, called the distance evolution.
  We show that there is a tight strip around the function $4\frac{\log\log(t)-\log(\log(t'/t)\vee1)}{|\log(\tau-2)|}\vee 2$ that the distance evolution never leaves with high probability as $t$ tends to infinity. We extend our results to weighted distances, where every edge is equipped with an i.i.d.\ copy of a non-negative random variable $L$.
 \end{abstract}
\end{frontmatter}
\section{Introduction}
In 1999, Faloutsos, Faloutsos, and Faloutsos studied the topology of the early Internet network, discovering power-laws in the degree distribution and short average hopcounts between routers \cite{faloutsos1999internet}. Undoubtedly, the Internet has grown explosively in the last two decades. It would be interesting to investigate what has happened to the graph structure surrounding the early routers (or their direct replacements) that were already there in 1999, ever since.
Natural questions about the evolving graph surrounding these early routers are:
\begin{itemize}
 \item How did the number of connections of the routers gradually change? Did the early routers become important hubs in the network?
 \item Can we quantify the number of hops needed to connect two early routers? Particularly, did the hopcount decrease or increase while their importance in the network changed, and more and more connections arrived? If so, how did the distance gradually evolve?
\end{itemize}
These kinds of questions drive the mathematics in the present paper.
We initiate a research line that studies how certain graph properties defined on a fixed set of vertices evolve as the surrounding graph  grows.
We consider the weighted-distance evolution  in two classical \emph{preferential attachment models} (PAMs).
Studying the evolution of a property on fixed vertices may sound as a natural mathematical question. Yet, only the evolution of the degree of fixed vertices has been addressed so far in the PAM literature \cite{dereich2009random,mori2002}.

A realization of a classical preferential attachment graph can be constructed according to an iterative procedure. One starts with an initial graph $\mathrm{PA}_1=(V_1,E_1)$ on the vertex set $V_1=\{1\}$ and edge set $E_1=\varnothing$, after which vertices arrive sequentially at deterministic times $t\in\{2,3,\dots\}$. We denote the graph at time $t$ by $\mathrm{PA}_t$ and label all the vertices by their arrival time, also called \emph{birth time}.
The arriving vertex $t$ connects to present vertices such that it is more likely to connect to vertices with a high degree at time $t$. Let $\P(\{t\to v\}\mid \mathrm{PA}_{t-1})$ denote the probability that $t$ connects to $v<t$.
We consider two classical (non-spatial) variants of the model, the so-called $(m,\delta)$-model based on \cite{berger2005spread,bollobas2004diameterpa} and the independent connection model \cite{dereich2009random}. They are formally defined in Section \ref{sec:models-examples}. They both assume that there exists $\tau\in(2,3)$ such that
\be
\P(\{t\to v\}\mid \mathrm{PA}_{t-1})  = \frac{D_v(t-1)}{t(\tau-1)} + O(1/t), \label{eq:intro-p-conn}
\ee
where $D_v(t-1)$ denotes the degree of vertex $v$ directly after the arrival of vertex $t-1$. As a result, the asymptotic degree distribution has a power-law decay with exponent $\tau$ \cite{dereich2009random,hofstad2016book1}, that we therefore call the power-law exponent.

The \emph{graph-distance evolution} is a discrete-time stochastic process that we denote by  $\big(d_G^{(t')}(u_t,v_t)\big)_{t'\geq t}$ and  define formally in Definition \ref{def:distances} below.
Here, $u_t$ and $v_t$ are two \emph{typical vertices}, i.e., they are sampled uniformly at random from the vertices in $\mathrm{PA}_t$. The graph distance $d_G^{(t')}(u_t,v_t)$ is the number of edges on the shortest path between $u_t$ and $v_t$ that uses only vertices that arrived at latest at time $t'$.
The distance evolution $\big(d_G^{(t')}(u_t,v_t)\big)_{t'\geq t}$ is nonincreasing in $t'$, since new edges arrive in the graph that may form a shorter path between $u_t$ and $v_t$.
We will now state our main result for the graph-distance evolution.
To describe the graph distance we define for $t'\geq t$, writing $a\vee b:=\max\{a,b\}$,
\begin{equation}
 K_{t,t'} = 2 \left\lfloor \frac{\log\log(t) - \log\big(\log(t'/t)\vee 1\big)}{|\log(\tau-2)|}\right\rfloor\vee 1. \label{eq:kt}
\end{equation}

\begin{theorem}[Graph-distance evolution]\label{cor:graph-evolution}
 Consider the preferential attachment model with power-law exponent $\tau\in(2,3)$. Let $u_t, v_t$ be two typical vertices in $\mathrm{PA}_t$. Then
 \begin{equation}
  \bigg(\sup_{t'\geq t} \left|d_G^{(t')}(u_t,v_t) - 2K_{t,t'}\right|\bigg)_{t\geq 1} \label{eq:graph-evolution}
 \end{equation}
 is a tight sequence of random variables.
\end{theorem}
Here, a sequence of random variables $(X_n)_{n\geq 1}$ is called tight if $\lim_{M\to\infty}\sup_n\P(|X_n|>M) = 0$.
Theorem \ref{cor:graph-evolution} tracks the evolution of $d_G^{(t')}(u_t,v_t)$ as time passes and the graph around $u_t$ and $v_t$ grows, since in \eqref{eq:graph-evolution} the supremum  is taken  over $t'$.  Below, in Theorem \ref{thm:weighted-evolution}, we extend Theorem \ref{cor:graph-evolution} to a general setting and consider the so-called \emph{weighted-distance evolution} $\big(d_L^{(t')}(u_t,v_t)\big)_{t'\geq t}$. There, we equip every edge in the graph with a weight, an i.i.d.\ copy of a random variable $L$.  We consider the evolution of the weighted distance, the sum of the weights along the least-weighted path from $u_t$ to $v_t$ that is present at time $t'$. We obtain results for \emph{any} non-negative random variable $L$ that serves as edge-weight distribution.

As a consequence of Theorem \ref{cor:graph-evolution}, we  obtain a hydrodynamic limit, i.e., a scaled version of the distance evolution converges under proper time scaling uniformly in probability to a non-trivial deterministic function.
\begin{corollary}[Hydrodynamic limit for the graph-distance evolution]\label{cor:hydro-graph}
 Consider the preferential attachment model with power-law exponent $\tau\in(2,3)$. Define $T_t(a):=t\exp(\varepsilon\log^a(t))$ for $a\geq0$ and arbitrary $\varepsilon>0$. Let $u_t, v_t$ be two typical vertices in $\mathrm{PA}_t$. Then
 \be
 \sup_{a\geq0}\left| \frac{d_G^{(T_t(a))}(u_t,v_t)}{\log\log(t)}  - (1-\min\{a,1\})\frac{4}{|\log(\tau-2)|}\right| \toinp 0, \qquad \text{as }t\to\infty.\label{eq:intro-hydro-graph}
 \ee
\end{corollary}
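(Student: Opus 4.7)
The plan is to reduce this hydrodynamic statement to a deterministic computation by invoking Corollary \ref{cor:graph-evolution} to absorb the random fluctuations, and then analyzing $K_{t,T_t(a)}$ explicitly as a function of $a$.

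First, I would observe that Corollary \ref{cor:graph-evolution} gives tightness of $X_t := \sup_{t'\geq t}\bigl|d_G^{(t')}(u_t,v_t) - 2K_{t,t'}\bigr|$. Since $T_t(a)\geq t$ for every $a\geq 0$, one has $\sup_{a\geq 0}\bigl|d_G^{(T_t(a))}(u_t,v_t) - 2K_{t,T_t(a)}\bigr|\leq X_t$, and tightness combined with $\log\log(t)\to\infty$ yields
$$\sup_{a\geq 0}\left|\frac{d_G^{(T_t(a))}(u_t,v_t)}{\log\log(t)} - \frac{2K_{t,T_t(a)}}{\log\log(t)}\right| \toinp 0.$$
Hence it suffices to prove the \emph{deterministic} uniform convergence of $2K_{t,T_t(a)}/\log\log(t)$ to $(1-\min\{a,1\})\cdot 4/|\log(\tau-2)|$.

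Next, I would compute $K_{t,T_t(a)}$ in closed form. Since $\log(T_t(a)/t) = \log^a(t)$, for $t\geq e^e$ and any $a\geq 0$ we have $\log^a(t)\geq 1$, so $\log\bigl(1\vee\log(T_t(a)/t)\bigr) = a\log\log(t)$ (checking both $a=0$, which gives $\log 1=0$, and $a>0$). Substituting into \eqref{eq:qt-kt} gives
$$K_{t,T_t(a)} = 2\left\lfloor \frac{(1-a)\log\log(t)}{|\log(\tau-2)|}\right\rfloor \vee 2.$$
Write $L=\log\log(t)$ and $c=|\log(\tau-2)|$. For $a\in[0,1-c/L]$ the floor is at least $1$, so the $\vee 2$ is inactive and the floor contributes an error of at most $4/L$ between $2K_{t,T_t(a)}/L$ and $4(1-a)/c$. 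For $a\in(1-c/L,1]$ one has $K_{t,T_t(a)}=2$ while the target $4(1-a)/c$ lies in $[0,4/L]$, so the error is at most $8/L$; and for $a>1$ the target is $0$ and $K_{t,T_t(a)}=2$, again giving error $4/L$. Taking the supremum over $a\geq 0$ yields a deterministic error of $O(1/\log\log(t))\to 0$, completing the proof.

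There is no serious obstacle here: the substantive probabilistic work has been done in Corollary \ref{cor:graph-evolution}, and what remains is just careful bookkeeping of the floor and the $\vee 2$ clamp in the definition of $K_{t,t'}$, together with the observation that the supremum over $a\geq 0$ fits inside the supremum over $t'\geq t$ handled by the tightness statement.
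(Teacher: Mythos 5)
Your proposal is correct and takes essentially the same approach as the paper, whose own ``proof'' is the one-line remark after the corollary: compute $K_{t,T_t(a)}$ from \eqref{eq:qt-kt}, substitute into Corollary \ref{cor:graph-evolution}, and divide by $\log\log(t)$. You have simply filled in the bookkeeping (the floor, the $\vee\,2$ clamp, and the three $a$-regimes) that the paper leaves to the reader.
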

This can be verified by computing the value of $K_{t,T_t(a)}$ using \eqref{eq:kt}, substituting this value into \eqref{eq:graph-evolution}, and then dividing all terms by $\log\log(t)$.

Observe that in Corollary \ref{cor:hydro-graph} all $\log\log(t)$-terms have vanished when $a=1$.
The following consequence of Corollary \ref{cor:graph-evolution} illustrates the rate at which smaller order terms appear and vanish. In particular, the graph distance is of constant order as soon as $t'/t$ is of polynomial order in $t$.
\begin{corollary}[Lower-order terms]
 Consider the preferential attachment model with power-law exponent $\tau\in(2,3)$. Let $u_t, v_t$ be two typical vertices in $\mathrm{PA}_t$.
 Let $g(t)$ be any function that is bounded from above by $2K_{t,t}$, and set
 $T_g(t):=t^{1+(\tau-2)^{-g(t)/4}}$. Then, for two typical vertices $u_t$ and $v_t$ in $\mathrm{PA}_t$,
 \[
  \Big(d_G^{(T_g(t))}(u_t,v_t) - g(t)\Big)_{t\geq 1}
 \]
 is a tight sequence of random variables.
\end{corollary}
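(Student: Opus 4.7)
The plan is to deduce the claim from Corollary~\ref{cor:graph-evolution} by means of a purely deterministic computation verifying that $T_g(t)$ is calibrated so that $2K_{t,T_g(t)} = g(t) + O(1)$. Once this identity is in hand, the triangle inequality transfers the tightness of $\sup_{t'\geq t}\bigl|d_G^{(t')}(u_t,v_t)-2K_{t,t'}\bigr|$ supplied by Corollary~\ref{cor:graph-evolution} to the quantity $d_G^{(T_g(t))}(u_t,v_t)-g(t)$. No further probabilistic input is required.

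The computation is arithmetic. The hypothesis $0\leq g(t)\leq 2K_{t,t}$ plays two roles: it ensures that $T_g(t)\geq t$, so that Corollary~\ref{cor:graph-evolution} can be evaluated at $t'=T_g(t)$, and it places $g(t)/4$ in the range in which the clause $1\vee\log(t'/t)$ inside \eqref{eq:qt-kt} is resolved by its right-hand argument. Under this condition, a short manipulation based on the identity $\exp(-\lvert\log(\tau-2)\rvert)=\tau-2$ yields
\[
\log\log t \;-\; \log\log\bigl(T_g(t)/t\bigr) \;=\; \frac{g(t)}{4}\,\lvert\log(\tau-2)\rvert,
\]
and substituting into \eqref{eq:qt-kt} collapses the ratio to $g(t)/4$. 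Hence $2K_{t,T_g(t)} = 4\bigl(\lfloor g(t)/4\rfloor\vee 1\bigr)$, and the floor together with the saturation at $2$ inside $K_{t,t'}$ contribute at most an additive constant, giving the uniform deterministic bound $\lvert 2K_{t,T_g(t)}-g(t)\rvert\leq 4$.

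Combining the two ingredients via the triangle inequality,
\[
\bigl|d_G^{(T_g(t))}(u_t,v_t)-g(t)\bigr| \;\leq\; \sup_{s\geq t}\bigl|d_G^{(s)}(u_t,v_t)-2K_{t,s}\bigr| \;+\; 4,
\]
and the tightness of the first term on the right, from Corollary~\ref{cor:graph-evolution}, passes to the left-hand side, which gives the claim. The only point that demands a little care is the interplay between the floor, the saturation at $2$, and the $1\vee\cdot$ clause in \eqref{eq:qt-kt}; all three are rendered harmless by the hypothesis $g(t)\leq 2K_{t,t}$, so no genuine obstacle arises.
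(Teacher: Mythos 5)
Your approach --- identify $2K_{t,T_g(t)}$ with $g(t)$ up to a bounded additive error and then invoke Corollary~\ref{cor:graph-evolution} together with the triangle inequality --- is exactly the intended derivation, paralleling the sketch the paper gives for Corollary~\ref{cor:hydro-graph} just after \eqref{eq:intro-hydro-graph}. Your handling of the floor, the clamp at $2$, and the $1\vee\cdot$ clause, as well as the role you assign to the hypothesis $g(t)\leq 2K_{t,t}$, are all in order.

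However, the central identity you write does not follow from the formula for $T_g(t)$ as printed. Taking $T_g(t)=t^{1+(\tau-2)^{-g(t)/4}}$ literally gives $\log(T_g(t)/t)=(\tau-2)^{-g(t)/4}\log t$, and hence
\[
\log\log t - \log\log\bigl(T_g(t)/t\bigr) = -\tfrac{g(t)}{4}\,\lvert\log(\tau-2)\rvert,
\]
which is non-positive; the floor then yields a non-positive integer, the clamp forces $K_{t,T_g(t)}\equiv 2$, and $d_G^{(T_g(t))}(u_t,v_t)-g(t)$ would fail to be tight for any unbounded admissible $g$. The identity you actually state, with the plus sign, and the ensuing $2K_{t,T_g(t)}=4\bigl(\lfloor g(t)/4\rfloor\vee 1\bigr)$, are what emerge from $T_g(t)=t^{1+(\tau-2)^{g(t)/4}}$, i.e.\ with a \emph{positive} power of $\tau-2$. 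This is the sign forced by inverting the calibration requirement $2K_{t,T_g(t)}=g(t)+O(1)$ using $\exp(-\lvert\log(\tau-2)\rvert)=\tau-2$, and it is also the sign for which $g(t)\leq 2K_{t,t}$ guarantees $\log(T_g(t)/t)\geq 1$ for large $t$, as you use. The displayed definition of $T_g(t)$ in the corollary therefore carries a sign typo, and your proof silently assumes the corrected version. Flag that correction explicitly rather than letting the discrepancy stand; with it noted, the rest of your argument is complete and correct.
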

Indeed, setting any $g(t)$ that tends to infinity with $t$ results in a time scale $T_g(t)\sim t^{1+\varepsilon_{g(t)}}$ for some $\varepsilon_{g(t)}\to 0$ as $t$ tends to infinity.
\begin{remarky}\label{remark:graph-distance-2}
 Using a similar martingale argument as in Lemma \ref{lemma:degree-lowerbound} below for the degree of the vertices $u_t$ and $v_t$, one can show that when $t'=\Theta(t^{2/(3-\tau)})$, there will be a vertex that connects to both $u_t$ and $v_t$. Hence, the distance evolution settles on two.
\end{remarky}
\subsection{Literature perspectives on PAMs}
\subsubsection{Snapshot analysis}\label{sec:lit-snapshot}
The two models studied in this paper are the most commonly used pure PAMs in the literature, i.e., in these models it is solely the preferential attachment mechanism that drives the changes in the graph topology. These PAMs are mathematically defined by Bollob\'as and Riordan \cite{bollobas2004diameterpa}, and Dereich and M\"orters \cite{dereich2009random}.
For an overview of rigorous results and references we refer to \cite{hofstad2016book1}, but also to recent works on these models \cite{berger2014asymptotic, caravenna2016diameter,dereich2012typical,dereich2009random, dereich2013random,dommers2010diameters,jorkom2019weighted}.
Since the original PAM, many variants with more involved dynamics and connection functions have been introduced. In \cite{pittel2010random, janson2019preferential}, the vertex set is fixed and only edges are formed dynamically. The variations introduced in \cite{alves2019preferentialedgestep,deijfen2009growing} allow for edges being formed (or deleted in \cite{cooper2004random, deijfen2009growing}) between existing vertices. Refs.\ \cite{dereich2009random,dereich2013random} consider a version where the attachment function can be sublinear in the degree. In \cite{cipriani2019dynamical,dereichMaillerMortersCondens2017,dereichOrtgieseCondens2014,freeman2018extensive} vertices are equipped with a fitness and in \cite{malyshkin2014powerofchoic} the arriving vertices have a power of choice. Spatial variants where vertices have a location in an underlying Euclidean space are studied in \cite{aiello2008spatial,jacob2015spatial,jacob2017robustness}. Here, closeness in Euclidean distance is combined with preferential attachment. The age-dependent random connection model \cite{gracar2018age,gracarluchtrath2020robustness} is a recent spatial version. There the connection probabilities are not governed by the degree of vertices, but by their relative age compared to the arriving vertex.
In these papers, several graph properties have been studied in the large network limit, i.e., as the number of vertices $t$ tends to infinity. Stochastic processes on PAMs have been analysed in \cite{berger2005spread,can2015metastability} for the contact process and in \cite{amin2018prefBootstrap} for bootstrap percolation.

The above mentioned results and papers provide statements about static snapshots of the graph $\mathrm{PA}_t$ in the large network limit: the network is considered at a \emph{single time} $t$ as $t$ tends to infinity.
This snapshot analysis allows for comparison to (simpler) static random graph models, such as the configuration model \cite{bollobas1980countingregulargraphs,molloyreed1995randomgraphswithgivendegree}, Chung-Lu model \cite{chung2002average}, and the Norros-Reittu model \cite{norros2006conditionally}, and strikes to classify properties of random graphs as either \emph{universal} or \emph{model-dependent}.
See \cite{hofstad2016book1, hofstad2016book2} and its references for universal properties.
Due to the snapshot analysis, temporal changes of the graph that are reflected in the statements of Theorem \ref{thm:weighted-evolution}, are absent in earlier works for graph properties other than the degree of fixed vertices \cite{dereich2009random,mori2002}.

\subsubsection{Future directions: evolving properties}
This paper commences a research line by studying an evolving graph property (other than the degree of fixed vertices  \cite{dereich2009random,mori2002}).
Statements involving the evolution of a property describe the structure of the graph during a \emph{time interval}, rather than at a \emph{single time}. We consider the distance evolution in two classical preferential attachment models.
This requires a more fine-grained control of the \emph{entire graph} than the degree evolution of a \emph{fixed vertex}, and also yields more insight in the evolution of the structure of the graph.
One of the main reasons to consider distances for these classical PAMs is that they display a notable change over time. The growth terms decrease from $\log\log$-order to constant order as the graph grows. This is in contrary to, for example, the local clustering coefficient, a graph property related to the number of triangles which a typical vertex is a member of. The local clustering coefficient of a typical vertex is  of constant order and tends to zero for typical vertices due to the locally tree-like structure in classical PAMs.

A natural extension of the present paper would be to study the distance evolution in PAMs where the asymptotic degree distribution has finite variance. For this regime, it is known that the \emph{static} typical graph distance is of order $\Theta(\log(t))$, but the precise constant has not been determined.
We expect that in this regime the time-scaling of the growth is different from the scaling of the hydrodynamic limit in  Corollary \ref{cor:hydro-graph} and to see the distance drop by a constant factor when $t'/t$ is of polynomial order, rather than stretched exponential in the logarithm.

Distances in spatial preferential attachment (SPA) are studied in \cite{hirsch2018distances_spatial_pa} for the regime where $\tau\in(2,3)$: \cite{hirsch2018distances_spatial_pa} proves an upper bound using a similar \emph{two-connector} procedure that we also use here. The lower bound for distances in SPA for $\tau\in(2,3)$ and asymptotic results for other parameter regimes remain interesting open problems.

In most PAMs the graph and its edge set are increasing over time. In \cite{cooper2004random, deijfen2009growing} variations of PAMs are introduced where edges can be deleted. As a result the distance evolution is no longer monotone and other behaviour may be expected.

The variations of PAMs mentioned in Section \ref{sec:lit-snapshot} all have properties that can be considered from a non-static perspective.
For instance, one could analyse the local clustering coefficient in versions of PAMs that are not locally tree-like.
Static analysis of the local clustering coefficient on spatial variants of PAMs have been done in \cite{gracar2018age, jacob2015spatial}.
Some frequently studied global properties  are the size of the giant component and its robustness against site or edge percolation \cite{dereich2013random,eckhoff2014vulnerability,gracarluchtrath2020robustness,jacob2017robustness}, and condensation phenomena \cite{bianconi2001bose,cipriani2019dynamical, dereichMaillerMortersCondens2017,freeman2018extensive,malyshkin2014powerofchoic}.

\subsection{Methodology}
The proof of Theorem \ref{cor:graph-evolution} and Theorem \ref{thm:weighted-evolution} below consist of a lower bound and an upper bound.
For the upper bound we prove that at all times $t'\geq t$ there is a path from $u_t$ to $v_t$ that has length at most $2K_{t,t'} + M_G$ (from \eqref{eq:kt}) for some constant $M_G$ and contains only vertices born, i.e., arrived, before time $t'$. We first heuristically argue that the scaling for the graph distance in \eqref{eq:intro-hydro-graph} is a natural scaling. After that, we turn to the difficulties that arise in handling the dynamics.
The degree $D_{q_t}(t')$ of a vertex $q_t\in\{u_t,v_t\}$ at time $t'$ is of order $(t'/q_t)^{1/(\tau-1)}$.
Writing $t'=T_t(a):=t\exp(\log^a(t))$ and approximating the birth time of the uniform vertex $q_t$ by $t$, we have that
\[
 D_{q_t}\big(T_t(a)\big)\approx \exp\big(\log^a(t)/(\tau-1)\big)=:s^\sss{(a)}.
\]
Generally, a vertex of degree $s$ is at graph distance two from many vertices that have degree approximately $s^{1/(\tau-2)}$. This allows for an iterative \emph{two-connector procedure} that starts from an initial vertex with degree at least $s^\sss{(a)}$ and reaches in the $k$-th iteration a vertex with degree approximately ${(s^\sss{(a)})}^{(\tau-2)^{-k}}$. We call $k\mapsto (s^\sss{(a)})^{(\tau-2)^{-k}}$ the degree-threshold sequence. At each iteration, we greedily extend the path by two edges, arriving to such a higher-degree vertex. In the edge-weighted version, these two edges are chosen to minimize the total edge-weight among all such two edges. This two-connector procedure to vertices with increasing degree is iterated until the  well-connected inner core is reached. The inner core is the set of vertices with degree roughly $T_t(a)^{1/(2(\tau-1))}$ at time $T_t(a)$. Hence, for $a<1$, the total number of iterations to reach the inner core is approximately
\be
\min\Big\{k: (s^\sss{(a)})^{(\tau-2)^{-k}}\geq T_t(a)^{1/(2(\tau-1))}\Big\} \approx (1-a)\frac{\log\log(t)}{|\log(\tau-2)|}.\label{eq:meth-upper-kt}
\ee
By construction, the graph distance from $u_t$ and $v_t$ to the inner core is two times the right-hand side (rhs) in \eqref{eq:meth-upper-kt}.
The graph and weighted distance between vertices in the inner core are negligible, yielding the scaling in \eqref{eq:intro-hydro-graph}, as well as the upper bounds in Theorems \ref{cor:graph-evolution} and \ref{thm:weighted-evolution}.
\begin{figure}[t]
 \centering
 \includegraphics[width=0.93\textwidth]{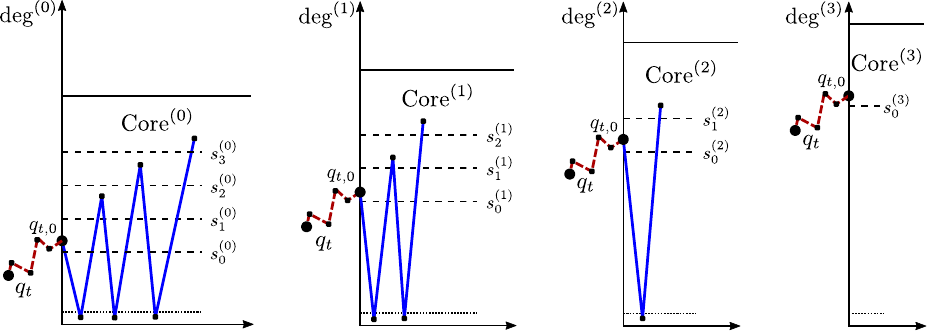}
 \caption{Construction of a path from $q_t$ to the inner core at the times $(t_i)_{i\leq 3}$, where $t_0=t$.
  The $y$-axis represents the degree of vertices at time $t_i$ and the connected dots the vertices on the path segments from $q_{t}$ via $q_{t,0}$ to the inner core. The $x$-axis represents graph distance from $q_{t,0}$. The black dashed horizontal lines represent the degree-threshold sequence, while the continuous black lines represent the maximal degree in the graph at time $t_i$. The degree of $q_{t,0}$, the maximal degree in the graph and the degree threshold for the inner core all increase over time. The degree of vertex $q_{t,0}$ satisfies the inner-core threshold at time $t_3$. The red dashed segment from $q_t$ to $q_{t,0}$
  is the same for all $i$, while the blue segment from $q_{t,0}$ is constructed at the times $(t_i)_{i\leq 3}$.}
 \label{fig:upper-bound}
\end{figure}

There are three main difficulties in the outlined procedure.
Firstly, it is not good enough to start the two-connector procedure from $u_t$ (or $v_t$) because the error terms coming from controlling the growth of the degree of $u_t$ (or $v_t$) at $t'$ close to $t$ are too large. To resolve this, we start the procedure from a vertex -- say $q_{t,0}$ -- that has degree at least $s_0^\sss{(0)}$ at time $t$ for some large but universally bounded constant $s_0^\sss{(0)}$.
The segment between $q_t$ and $q_{t,0}$ is fixed for all $t'\geq t$, so that we only have to account for a possible error once.
Secondly, we need to bound the degree of the vertex $q_{t,0}$ from below over the entire time interval $[t, \infty)$, not just at a specific time $t'$.
For this we employ martingale arguments.
Lastly, to make the error probabilities summable in $t'$, we argue that the two-connector procedure does not have be executed for every time $t'\geq t$, but only along a specific subsequence of times $(t_i)_{i\geq 0}$, where
\[
 t_i\in \big[t\exp\big((\tau-2)^{-i+1}\big),t\exp\big((\tau-2)^{-i-1}\big)\big].
\]
This sequence is chosen such that at time $t_{i+1}$ one iteration less than at time $t_i$ is needed to reach the inner core from the initial vertices, and these are exactly the times when $K_{t,t'}$ crosses an integer and hence a previously present path is no longer short enough. See Figure \ref{fig:upper-bound} for a sketch. On the time scale $T_t(a)$, the number of iterations scales linearly in $a\in[0,1]$.

\begin{figure}[t]
 \centering
 \includegraphics[width=0.93\textwidth]{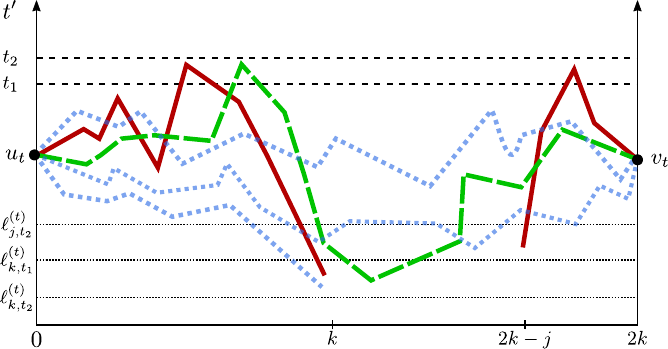}
 \caption{Good and bad path decomposition for the lower bound.
 The $y$-axis represents arrival time of vertices and the $x$-axis the graph distance from $u_t$. Bad paths are displayed in red, good paths are green and dashed. The blue dotted paths represent possible paths that are absent at time $t_1$. Let $t_2>t_1$ and $k>j$. The black tiny-dotted horizontal lines represent the birth-threshold array $(\ell_{k,t'}^\sss{(t)})_{k\geq 0, t'\geq t}$ which is decreasing in $t'$ at the times $t_1,t_2$ and also decreasing in $k$. At time $t_1$ there is neither a bad path of length at most $k$ present, nor a good path of length $2k$ that connects $u$ and $v$. Then, if $u$ and $v$ are at time $t_2$ at graph distance $2k$, there must be either a bad path of length at most $k$ emanating from $u$ or $v$ that traverses a vertex in $(t_1,t_2]$, or there must be a short good path traversing such a vertex. Observe that the good path is allowed to traverse a vertex in $[\ell_{k,t_2}^\sss{(t)},\ell_{k,t_1}^\sss{(t)})$. In particular, this holds for $t_2=t_1+1$.}
 \label{fig:lower-bound-intro}
\end{figure}
For the lower bound  we first bound the probability that the graph distance $d_G^{(t')}(u_t,v_t)$ is ever too short and then extend it to weighted distances.
To estimate the probability of a too short path being ever present, we develop a refined truncated path-counting method inspired by \cite{dereich2012typical}.
Let, for a fixed $t$, $(\ell_{k,t'}^{\sss{(t)}})_{k\geq0,t'\geq t}$ be an array of birth times, i.e., arrival times of vertices.
The path-counting method first excludes possible paths from $u_t$ to $v_t$ that are unlikely to be present in $\mathrm{PA}_{t'}$, called \emph{bad paths}.
A bad path of length $k$ reaches a vertex born before time $\ell_{k,t'}^\sss{(t)}$ using only vertices born before $t'$. The longer a path is, the more likely it is that an old vertex can be reached. Moreover, as the graph grows, it becomes more likely that there is a short path to an old vertex. The array of birth times $(\ell_{k,t'}^\sss{(t)})_{k\geq 0,t'\geq t}$ is therefore nonincreasing in both parameters.
Among the other possible paths that are too short, the \emph{good paths}, the method counts the expected number of paths from $u_t$ to $v_t$  that are present in $\mathrm{PA}_{t'}$.
More precisely, the expected number of these paths of length at most $2K_{t,t'}-M_G$ is shown to be much smaller than one for some $M_G>0$.
The decomposition of good and bad paths is done for every $t'>t$, in an interlinked way.
The crucial observation is that if there is no too short path present at time $t'$, but there is a too short path present at time $t'+1$, then the vertex labelled $t'+1$ must be on this connecting path and thus it must be either on a bad path or on a too short good path. This trick allows us to develop a first moment method much sharper than a union bound simply over $t'$, since we only need to bound the expected number of bad or too short good paths that are restricted to pass through the newly arrived vertex $t'$.
 These bounds are a factor $1/t'$ smaller than similar bounds without the restriction. As a result, the error bound is summable in $t'$ and tends to zero as $t$ tends to infinity.  See Figure \ref{fig:lower-bound-intro} for a sketch of the argument.

To extend the result from graph distances to weighted distances for Theorem \ref{thm:weighted-evolution} below, we observe that if the graph distance between  $u_t$ and $v_t$ is at least $2K_{t,t'}-M_G/2$, then the graph neighbourhoods of radius $K_{t,t'}-M_G/2$ must be disjoint. A path that connects $u_t$ to $v_t$ must cross the boundaries of these graph neighbourhoods. We bound the number of vertices at distance precisely $k$ from $q_t\in\{u_t,v_t\}$ from above, for $k\leq K_{t,t'}-M_G/2$. This allows to bound the weight of the least-weight edge between vertices at distance $k$ and $k+1$  from $q_t$ from below. The sum of these minimal weight bounds is then a lower bound to reach the boundary.
However, the error probabilities are not summable in $t'$. To resolve this, we show that it is sufficient to consider only a subsequence of times, similarly to the upper bound.

\subsection*{Organisation}
In the next section we rigorously define the models.
The lower bound is proven in Section \ref{sec:lower-bound}.
In Section \ref{sec:upper-bound} we present the proof of the upper bound.

\subsubsection*{Notation}
For two functions $f(x)$ and $g(x)$, we say $f(x)=o\big(g(x)\big)$ if $\lim_{x\to\infty} f(x)/g(x) = 0$, and write $f(x)=O(g(x))$ if $\lim_{x\to\infty} f(x)/g(x)<\infty$. For $\min\{a,b\}$ and $\max\{a,b\}$, we write $a\wedge b$ and $a\vee b$, respectively.
We define $[n]:=\{1,\dots,n\}$, while $\floor{n}:=\max\{x\in\mathbb{N}: x\leq n\}$ and $\ceil{n}:=\max\{x\in\mathbb{N}: x\geq n\}$.
Let $(X_n)_{n\geq0}$ and $(Y_n)_{n\geq 0}$ be two sequences of random variables. We say that $X_0$ dominates $Y_0$ if there exists a coupling of the random variables such that $\P(X_0\geq Y_0)=1$. Similarly, the sequence $(X_n)_{n\geq0}$ dominates $(Y_n)_{n\geq 0}$ if there exists a coupling of the sequences such that $\P(\forall_{n\geq0}:X_n\geq Y_n)=1$. A random graph dominates a random graph $H$ if there exists a coupling such that every edge in $H$ is also contained in $G$. If a random object $X$ dominates $Y$, we write $X\overset{d}\geq Y$.
We say that $(X_n)_{n\geq 0}$ converges in probability to a random variable $X_\infty$, i.e., $X_n\toinp X_\infty$, if for all $\vareps>0$ it holds that $\P(|X_n-X_{\infty}|>\vareps)=o(1)$.
A sequence of events $(\CE_n)_{n\geq 0}$ holds with high probability (whp)
if $\P(\CE_n)=1-o(1)$, and abbreviate `with probability' by w/p. The complement of an event $\CE$ is denoted by $\neg\CE$.
For a sequence of vertices in $(\pi_i)_{i\leq n}$ with birth times at most $t'$, we write $\{\pi_0\leftrightarrow\pi_1\}$ for the event that $\pi_0$ and $\pi_1$ are connected by an edge in $\mathrm{PA}_{t'}$ for $t'\geq \max\{\pi_0,\pi_1\}$. Moreover, we define $\{\pi_0\leftrightarrow\cdots\leftrightarrow\pi_n\}:=\{\pi_0\leftrightarrow\pi_1\}\cap\cdots\cap\{\pi_{n-1}\leftrightarrow\pi_{n}\}$.
The sequence (or path) $(\pi_i)_{i\leq n}$ is called self-avoiding if $\pi_i\neq \pi_j$ for all $i\neq j$.

\section{Model definition and general results}\label{sec:models-examples}
The first model that we introduce is a classical model where every arriving vertex connects to a fixed $m\in\mathbb{N}$ vertices, and the edges are created sequentially.
It is often called the $(m,\delta)$-model, and appeared first in \cite{berger2005spread,bollobas2004diameterpa}, for variations see \cite[Chapter 8]{hofstad2016book1}.
Denote by $D^{\leftarrow}_v(t, j)$ the number of incoming connections of a vertex $v$ after $j$ edges have been formed at time $t$, for $j=\{1,\dots,m\}$.
We abbreviate $D_v^{\leftarrow}(t):=D_v^{\leftarrow}(t,m)$, and denote by $\{t\overset{j}{\rightarrow}v\}$ the event that the $j$-th edge, for $j\in\{1,\dots,m\}$, of vertex $t$ connects to $v<t$.
\begin{definition}[Fixed-outdegree preferential attachment]\label{def:fpa}
 Fix $m\in\mathbb{N}, \delta\in(-m,\infty)$. Let $\mathrm{FPA}_1(m,\delta)$ be a single vertex without any edges.
 We define $\mathrm{FPA}(m,\delta)$ by the following sequence of conditional connection probabilities corresponding to the attachment of the $j$-th edge
 \begin{equation}
  \P\big(\{t\overset{j}{\rightarrow}v \} \mid \mathrm{FPA}_{(t,j^-)}\big) = \frac{D^{\leftarrow}_{v}(t,j-1) + m(1+\delta/m)}{(t-2)(\delta+2m) + j-1+m+\delta},
  \quad v\in[t-1],\label{eq:fpa}
 \end{equation}
 where $\mathrm{FPA}_{(t,j^-)}$ denotes the graph right before the insertion of the $j$-th edge of $t$. An important parameter of the model is
 \begin{equation}
  \tau_{m,\delta} := 3 + \delta/m. \label{eq:tau-fpa}
 \end{equation}
\end{definition}
\noindent The denominator in \eqref{eq:fpa} is a normalizing constant.
Definition \ref{def:fpa} does not allow self-loops, since $v\in[t-1]$, but allows multiple edges between vertices.

More recently, a similar model has been introduced where the outdegree of arriving vertices is variable, since the arriving vertex connects independently to existing vertices \cite{dereich2009random}. Again, $D_v^{\leftarrow}(t)$ denotes the indegree of vertex $v$ right after time $t$.
\begin{definition}[Variable-outdegree preferential attachment]\label{def:vpa}
 Let $f : \mathbb{N}\to (0,\infty)$ be a concave function satisfying $f(0)\leq 1$ and $f(1)-f(0)<1$. \
 We call $f$ the \emph{attachment rule}. Let $\mathrm{VPA}_1(f)$ be a single vertex without any edges.
 The model $\mathrm{VPA}(f)$ is defined by the following sequence of conditional connection probabilities corresponding to the attachments of the vertex arriving  $t$, i.e.,
 \begin{equation*}
  \P\big(\{t\rightarrow v\} \mid \mathrm{VPA}_{t-1}\big) = \frac{f\big(D_v^{\leftarrow}(t-1)\big)}{t},\qquad v\in[t-1],
 \end{equation*}
 where the connections to existing vertices are formed independently of each other. Important parameters of the model are
 \begin{equation}
  \gamma_f := \lim_{k\rightarrow\infty} f(k)/k, \qquad \tau_f:= 1+1/\gamma, \label{eq:tau-vpa}
 \end{equation}
 which are well-defined by the concavity of $f$, assuming $\gamma_f>0$. We call $\tau_f$ the power-law exponent.
 In this paper we restrict ourselves to affine attachment rules, i.e., $f(k)=\gamma k + \beta$.
\end{definition}
\noindent
Observe that in $\mathrm{VPA}(f)$, as in $\mathrm{FPA}(m,\delta)$, no self loops are possible. However, unlike $\mathrm{FPA}(m,\delta)$, $\mathrm{VPA}(f)$ does not allow for multiple edges between vertices.  Generally $\mathrm{FPA}(m,\delta)$ and $\mathrm{VPA}(f)$ show qualitatively the same behaviour when $\tau_{m,\delta}=\tau_f$. Therefore, we often refer to preferential attachment (PA) with a power-law exponent $\tau>2$, by which we mean either $\tau_{m,\delta}$ in \eqref{eq:tau-fpa} or $\tau_f$ in \eqref{eq:tau-vpa}.
Observe that \eqref{eq:intro-p-conn} holds for both models.

We now formalize the notion of paths for a sequence of growing graphs, which is used to define distances and distance evolutions.
\begin{definition}[Paths]\label{def:paths}
 We call a vertex tuple $(\pi_0,\dots,\pi_n)=:\bm{\pi}$ a $q$-path if $\pi_0=q$, and we call it a $(u,v)$-path if $\pi_0=u$, $\pi_n=v$, and $u\neq v$.
 The path $\bm{\pi}$ is called $t'$-possible if $\max_{i\leq n} \pi_i \leq t'$ and $t'$-present if it is $t'$-possible and all edges $\{(\pi_0,\pi_1),\dots,(\pi_{n-1},\pi_n)\}$ are present in the graph at time $t'$.
\end{definition}

For $u_t,v_t\in V_t$, let $\Omega_{t'}(u_t,v_t):=\{\bm{\pi}: \bm{\pi}\text{ is a $t'$-present $(u_t,v_t)$-path}\}$ denote the set of $t'$-present paths. Since the edge set and vertex set are increasing in $t'$, new paths between $u_t$ and $v_t$ emerge. Hence, we have that $\Omega_{t'}(u_t,v_t)\subseteq \Omega_{\tilde t}(u_t,v_t)$ for $\tilde t\ge t'$.

We equip every edge with a weight, an i.i.d.\ copy of a non-negative random variable $L$. The weight of an edge represents the time (for a fluid/information) to traverse an edge.
The model where weighted distances are studied in (random) graphs, is also called \emph{first-passage percolation},
see \cite{auffinger201750,hofstad2017stochasticprocesses} and their references for an overview of first-passage percolation on (random) graphs.

\begin{definition}[Distances in graphs]\label{def:distances}
 Consider the graph $\mathrm{PA}_t=(V_t,E_t)$ and let every edge $e$ be equipped with a weight $L_e$.
 We define the graph distance and weighted distance between $u_t,v_t\in V_t$ at time $t'$ as
 \begin{align*}
  d_G^{(t')}(u_t,v_t):= \min_{\bm{\pi}\in\Omega_{t'}(u_t,v_t)}\sum_{e\in\bm{\pi}}1,
  \qquad
  d_L^{(t')}(u_t,v_t):= \min_{\bm{\pi}\in\Omega_{t'}(u_t,v_t)}\sum_{e\in\bm{\pi}} L_e.
 \end{align*}
 For a vertex $v$ and a vertex set $\CW\subseteq[t']$, we define
 \begin{align*}
  d_G^{(t')}(v,\CW):= \min_{w\in\CW}d_G^{(t')}(v,w),
  \qquad
  d_L^{(t')}(v,\CW):= \min_{w\in\CW}d_L^{(t')}(v,w).
 \end{align*}
 If $u_t$ and $v_t$ are two typical vertices, i.e., they are sampled uniformly at random from $\mathrm{PA}_t$, then we call $\big(d_G^{(t')}(u_t,v_t)\big)_{t'\geq t}$ and $\big(d_L^{(t')}(u_t,v_t)\big)_{t'\geq t}$ the graph-distance evolution and weighted-distance evolution, respectively.
\end{definition}

To state our main result, we introduce two quantities to classify edge-weight distributions. Let
\begin{equation}
 \bm{I}_1(L) := \sum_{k=0}^\infty F_L^{(-1)}\big(\exp(-\exp(k))\big), \qquad \bm{I}_2(L):=  \sum_{k=0}^\infty \frac{1}{k}F_{L-b}^{(-1)}\big(\exp(-\exp(k))\big),\label{eq:exp-crit}
\end{equation}
where $F_L^{(-1)}(y):=\inf_x\{x\in\mathbb{R}: F_L(x)\geq y\}$ is the generalized inverse of $F_L(x):=\P(L\leq x)$, and $b:=\inf_x\{x: F_L(x)>0\}$.
See Remark \ref{remark:weight-dist} below for comments on $\bm{I}_1(L)$ and $\bm{I}_2(L)$. The following function will describe the weighted distance. Define for $a,b\in\mathbb{N}$
\begin{equation}
  \mathcal{Q}(a,b]:=\sum_{k=a+1}^{b}F_L^{(-1)}\big(\exp\big(-(\tau-2)^{-k/2}\big)\big),\label{eq:qt-sum}
\end{equation}
so that $\CQ(a,b]$ is a sum consisting of $b-a$ terms.
Recall $K_{t,t'}$ from \eqref{eq:kt} that describes the graph distance. We define for $t'\geq t$ its weighted-distance counterpart
\begin{equation}
 Q_{t,t'}:= \CQ(K_{t,t}-K_{t,t'}, K_{t,t}]. \label{eq:qt-kt}
\end{equation}
\begin{theorem}[Main result] \label{thm:weighted-evolution}
 Consider the preferential attachment model with power-law exponent $\tau\in(2,3)$. Equip every edge upon creation with an i.i.d.\ copy of the non-negative random variable $L$. Let $u_t,v_t$ be two typical vertices at time $t$. If $\bm{I}_2(L)<\infty$, then
 \begin{equation}
  \bigg(\sup_{t'\geq t} \left|d_L^{(t')}(u_t,v_t) - 2Q_{t,t'}\right|\bigg)_{t\geq 1} \label{eq:weighted-evolution-restricted-l}
 \end{equation}
 is a tight sequence of random variables.
 Regardless of the value of $\bm{I}_2(L)$, for any $\delta,\vareps>0$, there exists $M_L>0$ such that
 \begin{equation}
  \P\Big(\forall t'\geq t: 2Q_{t,t'}- M_L\leq d_L^{(t')}(u_t,v_t)\leq 2(1+\vareps)Q_{t,t'} + M_L\Big) \leq \delta.\label{eq:weighted-evolution-general-l}
 \end{equation}
\end{theorem}
Theorem \ref{thm:weighted-evolution} tracks the evolution of $d_L^{(t')}(u,v)$ as time passes and the graph around $u$ and $v$ grows, since in \eqref{eq:weighted-evolution-restricted-l} the supremum  is taken  over $t'$ and $t'$ is inside the $\P$-sign in \eqref{eq:weighted-evolution-general-l}.
It is the $(1+\varepsilon)$-factor in the upper bound in \eqref{eq:weighted-evolution-general-l} that makes \eqref{eq:weighted-evolution-general-l} different from \eqref{eq:weighted-evolution-restricted-l}. Thus, the lower bound is tight for any non-negative weight distribution.
A special case of Theorem \ref{thm:weighted-evolution} is when the edge-weight distribution $L\equiv 1$. Then the weighted distance and graph distance coincide, yielding Theorem \ref{cor:graph-evolution}, since $\bm{I}_2(1)=0$.

Observe that $2Q_{t,t'}=2\CQ(K_{t,t}-K_{t,t'}, K_{t,t}]$ in \eqref{eq:qt-kt} could be seen as two sums, each consisting of $K_{t,t'}$ terms: the number of terms in $Q_{t,t'}$ is equal to the number of edges on the shortest graph-distance path. The additive constant $M_L$ ensures that there will be many almost-shortest paths, from which we are able to choose one with low edge-weight. As time passes, the degrees of  $u_t$ and $v_t$ increase, so that it becomes more likely that there are edges close to $u_t$ and $v_t$ that have small edge-weights. Since the terms in $Q_{t,t'}$ are decreasing in $k$, this intuitively explains that $Q_{t,t'}$ consists of the \emph{smallest} $K_{t,t'}$ terms of $Q_{t,t}$, rather than the \emph{largest} $K_{t,t'}$ terms of the sum defining $Q_{t,t}$.
However, if $L\equiv 1+X$ for some random variable $X$ that satisfies $\bm{I}_1(X)<\infty$ (e.g., $X$ exponential, gamma, or a power of uniform on $[0,1]$), then $|K_{t,t'}-Q_{t,t'}|\le M$ for some constant $M$. Consequently, the graph distance and weighted distance are of the same order (up to additive constants). This phenomenon has also been observed for the Configuration Model \cite{Baroni2019}. As a result, for weight distributions with $\bm{I}_1(X)<\infty$ the location of the summation interval in \eqref{eq:qt-kt} does not influence the main result: there exists a constant $M_1$ such that $\CQ(0, K_{t,t'}]-\CQ(K_{t,t}-K_{t,t'}, K_{t,t}]\le M_1$ for all $t'\ge t\ge 0$.
For the other case, if $L=1+X$ such that $\bm{I}_1(X)=\infty$, such a constant does not exist.
For such distributions, the fact that the lower summation boundary in \eqref{eq:qt-kt} is shifted to $K_{t,t}-K_{t,t'}$ from $0$ matters and influences the growth rate.  As an example, we set $L$ such that the terms in the sum in \eqref{eq:qt-sum} are equal to $1+1/k$, yielding $\CQ(0,K_{t,t'}]\approx K_{t,t'}+\log(K_{t,t'})$, while for $t'$ large enough that $K_{t,t}-K_{t,t'}\gg 1$:
\begin{align*}
\CQ(K_{t,t}-K_{t,t'}, K_{t,t}]  &\approx K_{t,t'} + \log(K_{t,t}) - \log(K_{t,t}-K_{t,t'}) \ll \CQ(0,K_{t,t'}].
\end{align*}
We now recall the hydrodynamic limit for the graph-distance evolution in Corollary \ref{cor:hydro-graph}. A similar limit can be derived for the weighted-distance evolution if the weight distribution satisfies $\bm{I}_1(L)=\infty$.
The proper scaling and the constant prefactor, similar to \eqref{eq:intro-hydro-graph}, can be determined through studying the main growth term of $Q_{t,t'}$ in \eqref{eq:qt-kt} if $F_L^\sss{(-1)}$ is explicitly known.

Like Remark \ref{remark:graph-distance-2}, one can show that at the time scale $\Theta(t^{2/(3-\tau)})$ the weighted distance between $u_t$ and $v_t$ tends to $2b$ where $b:=\inf\{x\in\mathbb{R}: F_L(x)> 0\}$. At this time scale, many vertices connect to both $u_t$ and $v_t$, allowing to bound the weighted distance from above by $2(b+\varepsilon)$ for arbitrarily small $\varepsilon>0$.

Lastly, we recall the static counterpart of Theorem \ref{thm:weighted-evolution} by the authors in \cite{jorkom2019weighted} that generalizes earlier results on graph distances in PAMs \cite{caravenna2016diameter,dereich2012typical, dommers2010diameters}.
In \cite[Theorem 2.8]{jorkom2019weighted} it is shown that, for weight distributions satisfying $\bm{I}_2(L)<\infty$,
\be
\Big(d_L^{(t)}(u_t,v_t) - 2Q_{t,t}\Big)_{t\geq 1}\label{eq:intro-distances-1}
\ee
forms a tight sequence of random variables. Observe that Theorem \ref{thm:weighted-evolution} extends this result.
For the configuration model, similar results to \eqref{eq:intro-distances-1} were derived subsequently in \cite{adriaans2017weighted,baroni2017nonuniversality,bhamidi2010first}, indicating universality of first-passage percolation: the scaling for the two models is the same up to constant factors when $\tau\in(2,3)$.

We now comment on the quantities $\bm{I}_1(L)$ and $\bm{I}_2(L)$ from \eqref{eq:exp-crit} that are used to classify edge-weight distributions.
\begin{remarky}[Explosive and conservative weight distributions]\label{remark:weight-dist}
 If $\bm{I}_1(L)<\infty$, we call the weight distribution explosive, otherwise we call it conservative.
 $\bm{I}_1(L)$ measures how \emph{flat} the edge-weight distribution $F_L$ is around the origin. Many well-known distributions with support starting at zero are explosive distributions, e.g.\ $\text{Unif}[0,r]$, $\text{Exp}(\lambda)$. On the contrary, distributions that have support that is bounded away from zero automatically belong to the conservative class. The second quantity, $\bm{I}_2(L)$, measures flatness of $F_L$ around the start of its support and is infinite only for distributions that are \emph{extremely flat} near $b$. More concretely, if $F_L$ in the neighbourhood of zero satisfies for some $\beta\geq 1$
 \[
  F_L(x)=\exp\big(-\exp\big(\re^{x^{-\beta}}\big)\big),
 \]
 then $\bm{I}_2(L)=\infty$, while for $\beta\in(0,1)$ it holds that $\bm{I}_2(L)<\infty$.
 We are mostly interested in distributions that satisfy $\bm{I}_1(L)=\infty$, as by \cite[Theorem 2.8]{jorkom2019weighted} the typical weighted distance is already of constant order if $\bm{I}_1(L)<\infty$, making Theorem \ref{thm:weighted-evolution} a trivial statement in this case. Observe also that in this case $Q_{t,t'}$ in \eqref{eq:qt-kt} is bounded from above by some constant.
\end{remarky}

\section{Proof of the lower bound}\label{sec:lower-bound}
Now we prove the lower bound of Theorem \ref{thm:weighted-evolution}, i.e., we show that with probability close to one there is no \emph{too} short path between  $u_t$ and $v_t$ for any $t'\geq t$. The main contribution of this section versus existing literature, e.g.\ \cite{caravenna2016diameter,dereich2012typical,jorkom2019weighted}, is the following proposition concerning the graph distance. In its proof we develop a path-decomposition technique that uses the dynamical construction of $\mathrm{PA}_t$ in a refined way to get strong error bounds that are summable over $t'\geq t$. After the notational and conceptual set-up of the argument, we state and prove some technical lemmas.  In the end of the section, we extend Proposition \ref{proposition:lower-bound-graph} to the edge-weighted setting, using refinements of the error bounds in \cite{jorkom2019weighted}. We abbreviate $u:=u_t$ and $v:=v_t$, respectively.
\begin{proposition}[Lower bound graph distance]\label{proposition:lower-bound-graph}
 Consider the preferential attachment model with power-law exponent $\tau\in(2,3)$. Let $u, v$ be two typical vertices in $\mathrm{PA}_t$. Then for any $\delta>0$, there exists $M_G>0$ such that
 \be
 \P\big(\exists t'\geq t: d_G^{(t')}(u,v) \leq 2K_{t,t'}-2M_G\big) \leq \delta.\label{eq:lower-bound-graph}
 \ee
\end{proposition}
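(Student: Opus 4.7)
The plan is to refine the path-counting method of \cite{dereich2012typical,jorkom2019weighted} so that the resulting snapshot error bound at a single time $t'$ becomes summable in $t'\geq t$ by exploiting the dynamical construction of $\mathrm{PA}_{t'}$. Fix a birth-threshold array $(\ell_{k,t'}^{(t)})_{k\geq 0,\, t'\geq t}$ that is non-increasing in both $k$ and $t'$, of order roughly $\ell_{k,t'}^{(t)}\asymp t^{(\tau-2)^{(K_{t,t'}-k)/2}}$, mirroring the degree-threshold sequence of the two-connector procedure. Call a $t'$-present $(u,v)$-path $\bm{\pi}=(\pi_0,\dots,\pi_n)$ of length $n\leq 2K_{t,t'}-2M_G$ \emph{bad} if some prefix of length $k\leq K_{t,t'}-M_G$ starting from either $u$ or $v$ contains a vertex born before $\ell_{k,t'}^{(t)}$, and \emph{good} otherwise. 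The target is to bound the probability that any such bad or good path exists for any $t'\geq t$, by choosing $M_G$ large.

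The static core of the argument is the usual first-moment estimate. Using the edge-probability bound coming from \eqref{eq:intro-p-conn}, the probability that a fixed self-avoiding $t'$-possible path $(\pi_0,\dots,\pi_n)$ is $t'$-present is dominated by $\prod_{i=0}^{n-1} C_1\,(\pi_i\vee\pi_{i+1})^{-(\tau-2)/(\tau-1)}(\pi_i\wedge\pi_{i+1})^{-1/(\tau-1)}$. Summing this product over all self-avoiding bad $q$-paths emanating from $q\in\{u,v\}$ yields, by a geometric computation in the prefix length $k$ with the threshold $\ell_{k,t'}^{(t)}$, a contribution of size $O((\tau-2)^{cM_G})$; summing it over all self-avoiding good $(u,v)$-paths of length at most $2K_{t,t'}-2M_G$, all of whose intermediate vertices are above the corresponding thresholds, yields a bound of the same form. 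Hence the snapshot probability that a too-short path exists in $\mathrm{PA}_{t'}$ can be made smaller than any prescribed $\eta>0$ by taking $M_G$ large.

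To upgrade this snapshot bound to the uniform statement \eqref{eq:lower-bound-graph}, I decompose
\[
\big\{\exists t'\geq t:\,d_G^{(t')}(u,v)\leq 2K_{t,t'}-2M_G\big\}\subseteq \big\{d_G^{(t)}(u,v)\leq 2K_{t,t}-2M_G\big\}\cup \bigcup_{t'\geq t}\CB_{t'+1},
\]
where $\CB_{t'+1}$ is the event that a too-short path (with respect to the threshold at time $t'+1$) is present in $\mathrm{PA}_{t'+1}$ but no such path is present in $\mathrm{PA}_{t'}$. Since $K_{t,t'}$ decreases by at most one as $t'\mapsto t'+1$ and every edge present in $\mathrm{PA}_{t'}$ persists, any witness of $\CB_{t'+1}$ must traverse the newly born vertex $t'+1$. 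Restricting the path count to paths with a prescribed vertex $\pi_j=t'+1$ removes one summation variable and, via the two edge-probability factors incident to $t'+1$, supplies an $O(1/t')$ factor over the unrestricted snapshot bound. Summing the resulting $O\big((\tau-2)^{cM_G}/t'\big)$ bounds over $t'\geq t$ yields a convergent series that can be driven below $\delta/2$ by taking $M_G$ large; combined with the snapshot bound at $t'=t$, this proves \eqref{eq:lower-bound-graph}.

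The main obstacle is the careful synchronization at the (rare) times where $K_{t,t'}$ drops by one: there the threshold $\ell_{k,t'}^{(t)}$ jumps and the good/bad classification changes for preexisting paths, so one must argue that any \emph{newly} too-short path either passes through the fresh vertex $t'+1$ or was already too short under the \emph{previous} threshold at time $t'$; this is handled by making $\ell_{k,t'}^{(t)}$ slightly more restrictive than the naive scaling so that the good/bad boundary has sufficient slack across the discrete drops of $K_{t,t'}$. A secondary delicate point is the uniform-in-$t'$ validity of the edge-probability estimate along paths containing the newly arrived vertex, which is underpinned by a standard martingale concentration bound for the PA degree process $D_v(t')$ of early vertices. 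Working on this high-probability good event throughout, the two quantitative estimates above deliver the required summable error bound.
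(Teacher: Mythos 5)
Your proposal captures the main structural ideas of the paper's argument: (i) truncated path-counting against a birth-threshold array $(\ell^{(t)}_{k,t'})$ non-increasing in both $k$ and $t'$, (ii) decomposing by the first time $t'$ at which a too-short (good or bad) path appears, (iii) exploiting that any such newly present path must pass through the freshly arrived vertex, and (iv) gaining a $1/t'$ factor by fixing one vertex on the path to be that new vertex. Items (iii) and (iv) are exactly the paper's Observation~\ref{obs:lower-bound}(4) and Corollary~\ref{cor:lower-conv}, so up to this point you are on the right track.

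There is, however, a genuine quantitative gap in the concluding step. You write that summing $O\big((\tau-2)^{cM_G}/t'\big)$ over $t'\geq t$ ``yields a convergent series that can be driven below $\delta/2$ by taking $M_G$ large'' --- but $\sum_{t'\geq t}1/t'$ diverges, so a bare $1/t'$ factor cannot be summed over $t'$, no matter how small the prefactor. The paper closes this gap by making the birth-threshold array slightly tighter: in \eqref{eq:ell}, $\ell^{(t)}_{k,t'}$ is chosen so that $\alpha_{[0,k)}(\ell^{(t)}_{k,t'})^{1-\gamma}\leq (k\log t')^{-3}$, which injects an additional $\log^{-3}(t')$ into the per-$t'$ error. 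Combined with the $1/t'$ gain, the per-$t'$ bound becomes $O\big(1/(t'k^2\log^3 t')\big)$, and $\sum_{t'>t}1/(t'\log^3 t')=O(\log^{-2} t)\to 0$. Without this extra factor your time-sum does not converge and the argument does not close. You do say at the end that $\ell$ should be ``slightly more restrictive,'' but you give a different (and unnecessary) justification: handling the discrete drops of $K_{t,t'}$. That concern does not actually arise, since $\ell^{(t)}_{k,t'}$ is built non-increasing in $t'$, so by Observation~\ref{obs:lower-bound} a $t'$-good path stays good and a $t'$-bad path at time $t'$ that uses no vertex born at $t'$ was already bad at $t'-1$; no extra slack in $\ell$ is needed for that bookkeeping. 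The slack is needed solely for summability over $t'$. Two secondary points you should also spell out: (a) to avoid overcounting you must make the per-$t'$ events disjoint in $t'$, as the paper does by building the requirement ``no such path existed before $t'$'' directly into $\CE^{(q)}_{\text{bad}}(k,t')$ and $\CE^{(u,v)}_{\text{short}}(n,t')$; (b) the base case $t'=t$ has no $1/t$ gain and is instead controlled through the choice $\ell_{0,t}=\lceil \delta' t\rceil$, which bounds the probability that $u$ or $v$ is too old by $\delta'+o(1)$.
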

\noindent
Observe that $t'$ is inside the $\P$-sign. Hence, \eqref{eq:lower-bound-graph} tracks the evolution of $d_G^{(t')}(u,v)$ as time passes, and the graph around $u$ and $v$ grows.
To estimate the probability of a too short path, we use a truncated path-counting method similar to \cite{dereich2012typical}.
This method first excludes possible paths that are unlikely to be present, called \emph{bad paths}.
Then, among the rest, the \emph{good paths}, it counts the expected number of paths that are too short and present in $\mathrm{PA}_{t'}$.
More precisely, the expected number of paths between $u$ and $v$ of length at most
\be
2\underline{K}_{t,t'}:=2K_{t,t'}-2M_G\label{eq:lower-kt}
\ee
is shown to be much smaller than one.
We do this decomposition  in an interlinked way that ensures that paths are only counted once.

\subsection{Set-up for the graph-distance evolution}
Recall that the arrival time of a vertex is also called birth time.
The decomposition of good and bad paths is based on an array of birth times $(\ell_{k,t'}^\sss{(t)})$ for which we make the following assumption throughout this section.
\begin{assumption}\label{ass:ell}
 The array of birth times $(\ell_{k,t'}^\sss{(t)})_{k\geq 0,t'\geq t}$ is a positive integer-valued array that is nonincreasing in both parameters and satisfies $\ell_{0,t'}^{(t)}\leq t$. We call it the birth-threshold array.
\end{assumption}
\noindent Recall the definition of paths in Definition \ref{def:paths}.
\begin{definition}\label{def:t-bad}
 Let $(\ell_{k,t'}^\sss{(t)})_{k\geq0,t'\geq t}$ be an array satisfying Assumption \ref{ass:ell}.
 A $t'$-possible $q$-path $(\pi_0,\dots,\pi_k)$ is called $t'$-good if $t'\geq\pi_j\geq \ell_{j,t'}^\sss{(t)}$ for all $j\leq k$, otherwise it is called $t'$-bad.
 A $t'$-possible $(u,v)$-path $(\pi_0,\dots,\pi_n)$ is called $(n,t')$-good if $\pi_j\wedge\pi_{n-j}\geq \ell_{j,t'}^\sss{(t)}$ for all $j\leq \floor{n/2}$, otherwise it is called $t'$-bad.
\end{definition}
\noindent
This definition calls any path bad if it has a too \emph{old} vertex, where the threshold $\ell_{k,t'}^\sss{(t)}$ depends on the distance from $\pi_0$. Thus, all vertices on a good path are sufficiently \emph{young}.
We decompose $t'$-bad paths according to their first vertex violating the threshold.
\begin{definition}\label{def:kt-bad}
 Let $(\ell_{k,t'}^\sss{(t)})_{k\geq0,t'\geq t}$ be an array satisfying Assumption \ref{ass:ell}.
 We say that a $\pi_0$-path $(\pi_0,\dots,\pi_n)$ of length $n$ is $(k,t')$-bad if the path is $t'$-possible and $\pi_j\geq \ell_{j,t'}^\sss{(t)}$ for all $j<k$, but $\pi_k< \ell_{k,t'}^\sss{(t)}$.
\end{definition}

\begin{observation}\label{obs:lower-bound}
 Let $(\ell_{k,t'}^\sss{(t)})_{k\geq0,t'\geq t}$ be an array satisfying Assumption \ref{ass:ell}.
 Then
 \begin{enumerate}
  \item if a path is $t'$-good, then it is $\tilde{t}$-good for all $\tilde{t}\geq t'$.
  \item if a path is $t'$-bad, it is possible that it turns $\tilde{t}$-good for some $\tilde{t}>t'$.
  \item if a path $(\pi_0,\dots,\pi_n)$ is $t'$-bad, then it is $\tilde{t}$-bad for any $\tilde{t}\in[\max\{\pi_i\}, t']$.
  \item if for all $i\le k$ no $(i,t'-1)$-bad path is present in $\mathrm{PA}_{t'-1}$, then a $(k, t')$-bad path can only be present in $\mathrm{PA}_{t'}$ if it passes through vertex $t'$.
 \end{enumerate}
\end{observation}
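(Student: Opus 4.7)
The plan is to observe that all four items are deterministic structural consequences of Definitions~\ref{def:t-bad} and~\ref{def:kt-bad}, together with the two monotonicity properties of the birth-threshold array from Assumption~\ref{ass:ell} (namely, $\ell_{j,t'}^{\sss(t)}$ is non-increasing in $t'$ and in $j$). I will verify each item in turn by unwinding the relevant definition. The only item that requires a real argument, as opposed to one-line monotonicity, is (4); items (1)--(3) amount to comparing the inequality $\pi_j \geq \ell_{j,t'}^{\sss(t)}$ at different values of $t'$.

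For (1), a $t'$-good path satisfies $t' \geq \pi_j \geq \ell_{j,t'}^{\sss(t)}$ at each relevant index $j$. Fixing $\tilde t \geq t'$, the monotonicity of $\ell$ in its second argument gives $\ell_{j,\tilde t}^{\sss(t)} \leq \ell_{j,t'}^{\sss(t)} \leq \pi_j$, while $\pi_j \leq t' \leq \tilde t$, so the path remains good at time $\tilde t$. Item~(2) is a pure existence claim: it suffices to exhibit a single path whose birth times $\pi_j$ violate the inequality at time $t'$ but satisfy it after the thresholds have dropped sufficiently at a later time $\tilde t$, which is possible whenever $\ell_{j,\cdot}^{\sss(t)}$ is strictly decreasing. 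For (3), if the path is $t'$-bad there is some index $j$ with $\pi_j < \ell_{j,t'}^{\sss(t)}$; for any $\tilde t \leq t'$ the monotonicity of $\ell$ in the second argument yields $\pi_j < \ell_{j,t'}^{\sss(t)} \leq \ell_{j,\tilde t}^{\sss(t)}$, so the inequality is violated at time $\tilde t$ as well, and the assumption $\tilde t \geq \max_i \pi_i$ precisely guarantees that the path is $\tilde t$-possible, so the notion of $\tilde t$-badness is meaningful.

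Item (4) will be handled by contraposition. Suppose $\boldsymbol{\pi} = (\pi_0, \ldots, \pi_n)$ is a $(k,t')$-bad path present in $\mathrm{PA}_{t'}$ that does not pass through vertex $t'$. Then all endpoints of its edges lie in $[t'-1]$, so every edge of $\boldsymbol{\pi}$ already exists in $\mathrm{PA}_{t'-1}$ and $\boldsymbol{\pi}$ is $(t'-1)$-possible and $(t'-1)$-present. The goal is to produce an $(i, t'-1)$-bad path with $i \leq k$ present in $\mathrm{PA}_{t'-1}$, contradicting the hypothesis. Define
\[
  k' := \min\bigl\{ j \leq k : \pi_j < \ell_{j,t'-1}^{\sss(t)} \bigr\}.
\]
Definition~\ref{def:kt-bad} gives $\pi_k < \ell_{k,t'}^{\sss(t)}$, and Assumption~\ref{ass:ell} upgrades this to $\pi_k < \ell_{k,t'}^{\sss(t)} \leq \ell_{k,t'-1}^{\sss(t)}$, so the set is non-empty and $k' \leq k$. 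By minimality, $\pi_j \geq \ell_{j,t'-1}^{\sss(t)}$ for $j < k'$ and $\pi_{k'} < \ell_{k',t'-1}^{\sss(t)}$, hence the prefix $(\pi_0, \ldots, \pi_{k'})$ is a $(k',t'-1)$-bad $\pi_0$-path of length $k'$ present in $\mathrm{PA}_{t'-1}$, which is the desired contradiction.

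The only subtle point to keep straight throughout is the distinction between the two senses of ``bad''. Definition~\ref{def:t-bad} records failure of the threshold inequality at \emph{some} index along the full path (and is what (1)--(3) concern), while Definition~\ref{def:kt-bad} pinpoints the \emph{first} index of failure and requires the preceding vertices to satisfy the threshold (which is what (4) concerns). The truncation step $k' \leq k$ in the proof of (4) is what converts a failure somewhere into a first failure, and this is the reason item~(4) uses the refined $(k,t')$-bad notion rather than merely $t'$-badness.
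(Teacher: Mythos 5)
Your proof is correct and follows essentially the same route as the paper, which offers no formal proof but states that all four items follow directly from the definitions of good/bad paths and the monotonicity of $(\ell_{k,t'}^{\sss{(t)}})$ in both parameters. Your elaboration of item (4) — using that any edge not incident to the newly arrived vertex $t'$ is already present in $\mathrm{PA}_{t'-1}$ and then truncating at the first threshold violation to produce an $(i,t'-1)$-bad prefix with $i\leq k$ — is exactly the intended argument.
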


\begin{figure}[t]
 \centering
 \begin{subfigure}{.29\textwidth}
  \centering
  \includegraphics[height=.25\textheight]{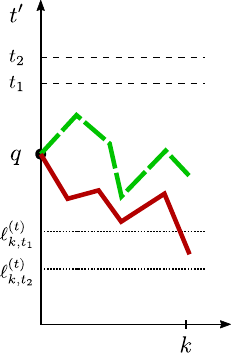}
  \caption{}
 \end{subfigure}
 \begin{subfigure}{.29\textwidth}
  \centering
  \includegraphics[height=.25\textheight]{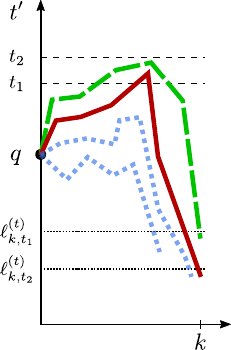}
  \caption{}
 \end{subfigure}
 \begin{subfigure}{.39\textwidth}
  \centering
  \includegraphics[height=.25\textheight]{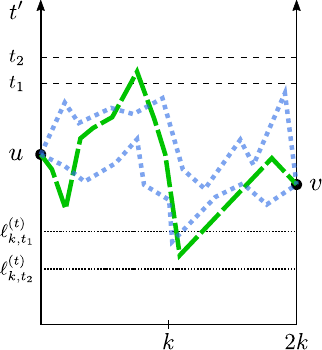}
  \caption{}
 \end{subfigure}
 \caption{Good and bad path decomposition for the lower bound. Bad paths are displayed in red, the green dashed lines are the good paths, and the blue dotted lines represent possible paths that are absent. The $y$-axis represents the birth time of the vertices and the $x$-axis the graph distance from $q$ and $u$, respectively. In Figure (A) we see that if a path is $t_1$-good, then it is also $t_2$-good
  since $t'\mapsto\ell_{k,t'}^\sss{(t)}$ is decreasing. However, the red $(k,t_1)$-bad path turns $(k,t_2)$-good. Figure (B) shows that if there is no $(k,t_1)$-bad path, a red $(k,t_2)$-bad path must pass through a vertex in $(t_1,t_2]$. Note that the green path in Figure (B) is $t_2$-good. Although it violates a birth threshold valid at time time $t_1$, the path is not $t_1$-bad because it is not $t_1$-possible.  Figure (C) shows that if there is neither a good, nor a bad $t_1$-present $(u,v)$-path of length $2k$, then a $t_2$-present (good) $(u,v)$-path must pass through a vertex in $(t_1,t_2]$. We apply the observations in these figures for $t_2=t_1+1$.}
 \label{fig:lower-bound}
\end{figure}
\noindent
All four observations follow directly from the definitions of good and bad paths, and the fact that $(\ell_{k,t'}^\sss{(t)})_{k\geq0,t'\geq t}$ is decreasing in both parameters, see Figure \ref{fig:lower-bound}(A-B). The fourth observation turns out to be crucial in our decomposition argument.
We define the events whose union implies the event between brackets in \eqref{eq:lower-bound-graph}. We start with the event of having a bad path emanating from $q\in\{u,v\}$ for $k\geq 1, t'\geq t$, i.e.,
\begin{numcases}{\CE_{\text{bad}}^{(q)}(k,t'):=}
 \big\{\text{$\exists (k,t)$-bad $q$-path}\big\}, & $t'=t$, \label{eq:lower-bad-def-t}\\
 \big\{\text{$\exists (k,t')$-bad $q$-path, $\forall_{i\leq k}:\nexists (i,t'-1)$-bad $q$-path}\big\}, & $t'>t$.\label{eq:lower-bad-def}
\end{numcases}
Here the sign $\exists$ indicates that a path is present.
For completeness, we define for $t'\geq t$, $k=0$,
\begin{equation}
 \CE_{\text{bad}}^{(q)}(0,t'):=\big\{q<\ell_{0,t'}^\sss{(t)}\big\} \subseteq \CE_{\text{bad}}^{(q)}(0,t),\label{eq:lower-bad-def-init}
\end{equation}
where the inclusion follows since $t'\mapsto\ell_{0,t'}^\sss{(t)}$ is nonincreasing.
By the additional restriction on bad paths in \eqref{eq:lower-bad-def}, the events $\CE_{\text{bad}}^{(q)}(k,t')$ are disjoint in both parameters.
For $\CE_{\text{bad}}^{(q)}(k,t')$ and $t'>t$, as a result of Observation \ref{obs:lower-bound}(4) and the restriction in the definition \eqref{eq:lower-bad-def} of not having a bad path at time $t'-1$, we only have to consider paths that pass through the vertex $t'$. This motivates to decompose the $(k,t')$-bad paths passing through vertex $t'$ according to the number of edges between the initial vertex $q\in\{u,v\}$ and $t'$.
Indeed, consider a $(k,t')$-bad $q$-path where $t'$ is the $i$-th vertex, i.e., it is of the form $(q,\pi_1,\dots,\pi_{i-1},t',\pi_{i+1},\dots,\pi_k)$.
Then, by Definition \ref{def:kt-bad}, the constraints that this path satisfies is that for $j<k$, $\pi_j\geq\ell_{j,t'}^\sss{(t)}$. This means that on the segment $(\pi_{i+1},\dots,\pi_k)=:(\sigma_1,\dots,\sigma_{k-i})$ the indices of the constraints have to be shifted by $i$, giving rise to $\sigma_j\geq\ell_{i+j,t'}^\sss{(t)}$ for $j\leq k-i$. Hence, we introduce \emph{good paths on a segment}. Recall that $\{\pi_0\leftrightarrow \cdots\leftrightarrow\pi_n\}$ means that $(\pi_0,\dots,\pi_n)$ is $t'$-present for $t'=\max_{i\leq n}\pi_i$.

\begin{definition}\label{def:good-paths-segment}
 Given an array $(\ell_{k,t'}^\sss{(t)})_{k\geq0,t'\geq t}$ satisfying Assumption \ref{ass:ell}, let
 \begin{align}
  \big\{x \goodpathto{i}{n} y \big\}_{t'} & := \big\{\mathrm{disjoint }(\pi_i, \dots, \pi_{n-1}): x=\pi_i\leftrightarrow\cdots \leftrightarrow\pi_{n-1}\leftrightarrow y\mid \forall_{j< n}: \pi_j \in [\ell_{j,t'}^\sss{(t)},t'] \big\}.\nonumber
 \end{align}
 If $\big\{x \goodpathto{i}{n} y \big\}_{t'}\neq \varnothing$, we say that there is a $t'$-good $x$-path on segment $[i,n)$. We write
 \[
  \big\{x \goodpathto{i_1}{n_1} y \big\}_{t'} \circ \big\{y \goodpathto{i_2}{n_2} z \big\}_{t'}
 \]
 for the set of self-avoiding $(x,z)$-paths that are $t'$-good on the segment $[i_1,n_1)$ from $x$ to $y$ and $t'$-good on the segment $[i_2,n_2)$ from $y$ to $z$.
\end{definition}
\noindent
Note that there is no birth restriction on the last vertex on the segment, explaining the half-open interval superscript $[i,n)$. Thus, if $\pi_k<\ell_{k,t'}^\sss{(t)}$, then
\[
 \big|\big\{q \goodpathto{0}{i} t'\big\}_{t'} \circ \big\{ t'\goodpathto{i}{k} \pi_k\}_{t'}\big| \geq 1
\]
precisely means that there is a $(k,t')$-bad $q$-path from $q$ to $\pi_k$ that has $t'$ as its $i$-th vertex.
For notational convenience we omit the subscript $t'$.

Having set up the definitions for the bad paths, we define the events that allow to count the expected number of too short good $(u,v)$-paths. Let, for $n\geq 1$,
\begin{numcases}
 {\CE_{\text{short}}^{(u,v)}(n,t'):=}
 \big\{\text{$\exists (n,t)$-good $(u,v)$-path}\big\},& $t'=t$,\label{eq:lower-short-def-t}\\
 \big\{\text{$\exists (n,t')$-good $(u,v)$-path, $\forall_{\tilde{t}<t'}:\nexists (u,v)$-path of length $n$}\big\}, &$t'>t$,
 \label{eq:lower-short-def}
\end{numcases}
and set for completeness
\begin{align}
 \CE_{\text{short}}^{(u,v)}(0,t') & :=\{u=v\}.\nonumber
\end{align}
Observe that in \eqref{eq:lower-short-def} we require that at previous times there was neither a good, nor a bad path of length $n$ between $u$ and $v$. This is a
stronger requirement than the one in \eqref{eq:lower-bad-def}, where we do not put any restrictions on good paths at a previous time, but there only one endpoint of the path ($u$ or $v$) is fixed.
By definition, for a fixed $n$, the events $\CE_{\text{short}}^{(u,v)}(n,t')$ are disjoint.
Moreover, we observe that if $\CE_{\text{short}}^{(u,v)}(n,t')$ holds, then there is a $t'$-present $(u,v)$-path of length $n$ connecting $u$ and $v$ that traverses the vertex $t'$, which is a similar observation to Observation \ref{obs:lower-bound}(4), see Figure \ref{fig:lower-bound}(C).
Using the definitions of the events $\CE_{\text{bad}}$ and $\CE_{\text{short}}$, we can bound the event between brackets in \eqref{eq:lower-bound-graph}, and hence its probability of occurring, as stated in the following lemma.

\begin{lemma}\label{cor:lower-event-splitting}
 Let $(\ell_{k,t'}^\sss{(t)})_{k\geq0,t'\geq t}$ be an array satisfying Assumption \ref{ass:ell}. Then
 \begin{align}
  \P\big(\exists t'\geq t: d_G^{(t')}(u,v) \leq 2\underline{K}_{t,t'}\big)
   & \leq \sum_{q\in\{u,v\}}\sum_{t'=t}^{\infty}\sum_{k=0}^{\underline{K}_{t,t'}}\ind{k\geq2\text{ or }t'=t}\P\big(\CE_{\mathrm{bad}}^{(q)}(k,t')\big) \label{eq:lower-total-prob-bound-a}
  \\ &\hspace{20pt}+ \sum_{t'=t}^\infty\sum_{n=0}^{2\underline{K}_{t,t'}}\ind{n\geq2\text{ or }t'=t}\P\big(\CE_{\mathrm{short}}^{(u,v)}(n,t')\big).\label{eq:lower-total-prob-bound-b}
 \end{align}
 \begin{proof}
  To prove the assertions in the statement, we will first bound the event between brackets on the left-hand side (lhs) in \eqref{eq:lower-total-prob-bound-a}. Eventually, the bound then follows by a union bound.

  \emph{Bounding the events.}
  We write $\sigma(u,v):=\{(u,v),(v,u)\}$. We aim to show that if $(\ell_{k,t'}^\sss{(t)})_{k\geq0,t'\geq t}$ is an array satisfying Assumption \ref{ass:ell}, then
  \begin{align}
   \big\{\exists t'\geq t: d_L^{(t')}(u,v)\leq 2\underline{K}_{t,t'}\big\}
   \subseteq
   \bigcup_{t'=t}^{\infty}\left(\Bigg(\bigcup_{k=0}^{\underline{K}_{t,t'}} \bigcup_{q\in\{u,v\}}\CE_{\mathrm{bad}}^{(q)}(k,t')\Bigg)
   \cup
   \bigcup_{n=0}^{2\underline{K}_{t,t'}} \CE_{\mathrm{short}}^{(u,v)}(n,t')
   \right).\label{eq:lower-dynamic-event}
  \end{align}
  Moreover, for $k,n\geq 2$ and $t'>t$
  \begin{align}
   \CE_{\mathrm{bad}}^{(u)}(k,t')     & \subseteq \bigcup_{x=1}^{\ell_{k,t'}^\sss{(t)}-1}\bigcup_{i=1}^{k-1}\Big\{\big|\big\{u\goodpathto{0}{i}t'\big\}\circ\big\{t'\goodpathto{i}{k}x\big\}\big|\geq 1\Big\},\label{eq:lower-dynamic-bad} \\
   \CE_{\mathrm{short}}^{(u,v)}(n,t') & \subseteq \bigcup_{\substack{(q_1,q_2)                                                                                                                                                             \\\in\sigma(u,v)}}\bigcup_{x=\ell_{\floor{n/2},t'}^\sss{(t)}}^{t'-1}\bigcup_{i=1}^{\floor{n/2}-1}
   \Big\{\big|\big\{q_1\goodpathto{0}{i}t'\big\}\circ\big\{t'\goodpathto{i}{\floor{n/2}}x\big\}\circ\{q_2\goodpathto{0}{\ceil{n/2}} x\big\}\big|\geq 1\Big\}\label{eq:lower-dynamic-short}                                                 \\ &\hspace{20pt}\cup \Big\{\big|\big\{q_1\goodpathto{0}{\floor{n/2}}t'\big\}\circ \big\{q_2\goodpathto{0}{\ceil{n/2}}t'\big\}\big|\ge 1\Big\}. \nonumber
  \end{align}
  We first prove \eqref{eq:lower-dynamic-short}.
  Let $\bm{\pi}$ be any path of length $n\geq2$ whose presence implies $\CE_{\text{short}}^{(u,v)}(n,t')$ for some $t'>t$, so that $\bm{\pi}$ is a $t'$-good $(u,v)$-path by the definition of $\CE_{\text{short}}^{(u,v)}(n,t')$ in \eqref{eq:lower-short-def}. From \eqref{eq:lower-short-def} it also follows that $t'$ is on $\bm{\pi}$, as there was neither a good, nor a bad $(u,v)$-path of length $n$ before time $t'$.
  Thus, the $t'$-good $(u,v)$-path $\bm{\pi}$ can be decomposed in a $t'$-good $u$-path of length $\floor{n/2}$ and a $t'$-good $v$-path of length $\ceil{n/2}$.
  Considering all possible positions of $t'$ on the path, the presence of $\bm{\pi}$ implies the event on the rhs in \eqref{eq:lower-dynamic-short}.
  There, we denoted by $x\neq t'$ the vertex at distance $\floor{n/2}$ from $q_1$ that satisfies the constraint $x\geq \ell_{\floor{n/2},t'}^\sss{(t)}$.
  Thus, $x$ is at distance $\ceil{n/2}$ from $q_1$, and since $j\mapsto\ell_{j,t'}^\sss{(t)}$ is nonincreasing also $x\geq \ell_{\ceil{n/2},t'}^\sss{(t)}$.
  So the inclusion in \eqref{eq:lower-dynamic-short} holds, since $\bm{\pi}$ was an arbitrary path.

  Similarly, let $\bm{\pi}=(q, \dots, \pi_k)$ be any path of length $k\geq 2$ whose presence implies $\CE_{\text{bad}}^{(q)}(k,t')$ for some $t'>t, q\in\{u,v\}$, so that $\pi_k<\ell_{k,t'}^\sss{(t)}$.
  By Observation \ref{obs:lower-bound}(4), vertex $t'$ must be on $\bm{\pi}$ and by a similar reasoning as before we obtain \eqref{eq:lower-dynamic-bad}.

  Lastly, we prove \eqref{eq:lower-dynamic-event} for which we rewrite the lhs as a union over time and paths, i.e.,
  \[
   \big\{\exists t'\geq t: d_L^{(t')}(u,v)\leq 2\underline{K}_{t,t'}\big\}
   =
   \bigcup_{t'=t}^{\infty}\bigcup_{n=0}^{2\underline{K}_{t,t'}}\bigcup_{\substack{(\pi_1,\dots,\pi_{n-1})\\\in[t']^{n-1},\\\text{disjoint}}}\big\{u\leftrightarrow\pi_1\leftrightarrow\cdots\leftrightarrow\pi_{n-1}\leftrightarrow v\big\}.
  \]
  Let $\bm{\pi}:=(\pi_0,\dots,\pi_n)$ be any self-avoiding path from $\pi_0:=u$ to $\pi_n:=v$ in this set.
  The smallest time $t'$ at which $\bm{\pi}$ can be present in the union on the rhs is at $t':=t\vee\max_{i\leq n} \pi_i$.
  Then, $n\leq 2\underline{K}_{t,t'}$ must hold due to the fact that $t'\mapsto \underline{K}_{t,t'}$ is nonincreasing.
  We will show now that the event that $\bm{\pi}$ is $t'$-present is captured in either $\CE_{\text{short}}^{(u,v)}(n,t')$ or $\CE_{\text{bad}}^{(q)}(k,\tilde{t})$ for some $\tilde{t}\leq t', k\leq n/2$,
  $q\in\{u,v\}$.
  For any length $n\geq 0$, if $u\wedge v<\ell_{0,t'}^\sss{(t)}$, then
  \[
   \{\bm{\pi}\text{ present}\}
   \,\subseteq \,
   \CE_{\text{bad}}^{(u)}(0,t)\cup\CE_{\text{bad}}^{(v)}(0,t),
  \]
  since $t'\mapsto \ell_{0,t'}^\sss{(t)}$ is nonincreasing.
  From now on we assume that $u\wedge v\geq\ell_{0,t'}^\sss{(t)}$. If $n\leq 1$, that is when $\{u=v\}$ or $\{u\leftrightarrow v\}$, then $\bm{\pi}$ must already be present at time $t$, i.e.,
  \[
   \{\bm{\pi}\text{ present}\}\subseteq \cup_{i\in\{0,1\}}\CE_{\text{short}}^{(u,v)}(i, t).
  \]
  From now on we assume that the length $n\geq 2$. Moreover, if $\bm{\pi}$ is a $t'$-good path, then
  \[
   \{\bm{\pi}\text{ present}\} \subseteq \CE_{\text{short}}(n,t').
  \]
  Assume $\bm{\pi}$ is not a $t'$-good $(u,v)$-path. Consequently, there is a $t'$-bad path emanating from either $u$ or $v$, which is a \emph{subpath} of $\bm{\pi}$. So, recalling Observation \ref{obs:lower-bound}(1) and (2), the first time that this bad subpath is present, i.e.,
  \[
   \tilde{t}:= \argmin_{\hat{t}\leq t'}\{\exists_{m\leq n/2}: (u,\pi_1,\dots,\pi_{\floor{n/2}})\text{ or }(v,\pi_{n-1},\dots,\pi_{n-\floor{n/2}}) \text{ is }(\hat{t},m)\text{-bad}\}
  \]
  is well-defined and at most $t'$. By Observation \ref{obs:lower-bound}(3), $\bm{\pi}$ is bad at $\tilde{t}$, so that for some $m\leq n/2$
  \[
   \{\bm{\pi}\text{ present}\} \subseteq \cup_{q\in\{u,v\}}\CE_{\text{bad}}^{(q)}(m,\tilde{t}).
  \]

  \emph{Union bound.}
  Having bounded the events between brackets on the lhs in \eqref{eq:lower-total-prob-bound-a} and \eqref{eq:lower-total-prob-bound-b},
  the assertions follow directly from a union bound on the events in \eqref{eq:lower-dynamic-event}. We argue now that the events where one of the indicators in \eqref{eq:lower-total-prob-bound-a} and \eqref{eq:lower-total-prob-bound-b}
  equals zero, happen with probability zero. We start with \eqref{eq:lower-total-prob-bound-b}: $\ind{n\geq 2\text{ or }t'=t}=0$ when both $t'>t$ and $n\in\{0,1\}$. Since no new paths connecting $u$ and $v$ of length one, i.e., a single edge, can be created after time $u\vee v\leq t$ we have that for $t'>t$ and $n\in\{0,1\}$
  \[
   \CE_{\text{short}}^{(u,v)}(n,t')=\varnothing,
  \]
  as by its definition in \eqref{eq:lower-short-def} we require that there was no path of length $n$ before time $t'$.
  Similarly, bad paths of length at most one must already be present at time $t$ since $t'\mapsto\ell_{k,t'}^\sss{(t)}$ is nonincreasing and starts at a value at most $t$. So for $q\in\{u,v\}$, $t'>t$, $k\in\{0,1\}$
  \begin{equation*}
   \CE_{\text{bad}}^{(q)}(k,t')=\varnothing.
  \end{equation*}
 \end{proof}
\end{lemma}

\subsection{Bounding the summands}
\noindent
The main goal of this section is to prove the following lemma for two suitably chosen sequences $k\mapsto\alpha_{[0,k)}$, $\beta_{[0,k)}$, defined below in \eqref{eq:alpha} and \eqref{eq:beta}. It obtains bounds on the individual summands in \eqref{eq:lower-total-prob-bound-a} and \eqref{eq:lower-total-prob-bound-b} in Lemma \ref{cor:lower-event-splitting}.
\begin{lemma}\label{cor:lower-conv}
 Let $k\mapsto\alpha_{[0,k)}$, $\beta_{[0,k)}$,  as in \eqref{eq:alpha} and \eqref{eq:beta} below,  respectively. Then there exists $C>0$ such that for $k\geq 2, n\geq 2$ and $t'>t$, $q\in\{u,v\}$
 \begin{align}
  \P\big(\CE_{\mathrm{bad}}^{(q)}(k,t')\big)     & \leq Ct'^{-1}(k-1)\alpha_{[0,k)}\sum_{x=1}^{\ell_{k,t'}^\sss{(t)}-1}x^{-\gamma}, \label{eq:lower-cor-1}                                                      \\
  \P\big(\CE_{\mathrm{short}}^{(u,v)}(n,t')\big) & \leq 2\beta_{[0,\ceil{n/2})}^2t'^{2\gamma-2} \label{eq:lower-cor-2}                                                                                          \\
                                                 & \hspace{10pt}+  Cnt'^{-1}\sum_{x=\ell_{\floor{n/2},t'}}^{t'-1}\big(\alpha_{[0,\ceil{n/2})}x^{-\gamma} + \beta_{[0,\ceil{n/2})}x^{\gamma-1}\big)^2. \nonumber
 \end{align}
 For $t'=t$, $k,n\geq 1$ and $q\in\{u,v\}$, it holds that
 \begin{align}
  \P\big(\CE_{\mathrm{bad}}^{(q)}(k,t)\big)     & \leq \alpha_{[0,k)}\sum_{x=1}^{\ell_{k,t}^\sss{(t)}-1}x^{-\gamma}, \label{eq:lower-cor-3} \\
  \P\big(\CE_{\mathrm{short}}^{(u,v)}(n,t)\big) & \leq
  \sum_{x=\ell_{\floor{n/2},t}^\sss{(t)}}^{t}\big(\alpha_{[0,\ceil{n/2})}x^{-\gamma} + \beta_{[0,\ceil{n/2})}x^{\gamma-1}\big)^2. \label{eq:lower-cor-4}
 \end{align}
\end{lemma}
We prove the lemma at the end of this section after having established the necessary preliminaries and identified the sequences $k\mapsto\alpha_{[0,k)}$, $\beta_{[0,k)}$.
The decomposition method counting paths that traverse the vertex $t'$ (for $t'>t$) yields a bound   in \eqref{eq:lower-cor-1} and \eqref{eq:lower-cor-2}  that are a factor $1/t'$ smaller than their counterparts with $t'=t$  in \eqref{eq:lower-cor-3} and \eqref{eq:lower-cor-4}. By small refinements of the methods in \cite{dereich2012typical} we obtain that the individual sums on the rhs in \eqref{eq:lower-cor-1} and \eqref{eq:lower-cor-2} are of order $1/\log^3(t')$.
This is why the error terms are summable in $t'$. The extra factor $1/t'$ illustrates the necessity of our decomposition method versus previous methods.

In order to prove Lemma \ref{cor:lower-conv}, it is crucial to understand the probabilities on having self-avoiding paths that are restricted to have specified vertices at some positions, by \eqref{eq:lower-dynamic-bad} and \eqref{eq:lower-dynamic-short}. For this we use the following proposition.
\begin{proposition}[$\mathrm{PA}(\gamma)$ {\cite[Proposition 3.1, 3.2]{dereich2012typical}}]\label{prop:pa-gamma}
 We say that a preferential attachment model satisfies the condition $\mathrm{PA}(\gamma)$, if there is a constant $\nu\in(0,\infty)$ such that for all $t'\in\mathbb{N}$, and pairwise distinct vertices
 $\pi_0, \dots, \pi_k\in[t']$
 \be
 \P(\pi_0\leftrightarrow\cdots\leftrightarrow \pi_k)\leq \prod_{i=1}^k\nu (\pi_k\wedge \pi_{k-1})^{-\gamma}(\pi_k\vee \pi_{k-1})^{\gamma-1} =: p(\pi_0, \dots,\pi_k). \label{eq:pa-gamma}
 \ee
 The above condition is satisfied for PA in Definitions \ref{def:fpa} and \ref{def:vpa} for $\gamma=1/(\tau-1)$. We set $p(\pi_0,\dots,\pi_k):=0$ if the vertices are not pairwise distinct.
\end{proposition}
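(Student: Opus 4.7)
The plan is to prove \eqref{eq:pa-gamma} by exposing the edges of the path in the order they are created in the dynamic construction, and combining the explicit connection rule \eqref{eq:intro-p-conn} with a standard moment bound on the degrees of the endpoints. In both Definitions~\ref{def:fpa} and~\ref{def:vpa} the edge $\{\pi_{i-1},\pi_i\}$ can only be created at time $s_i:=\pi_{i-1}\vee\pi_i$, so I would index the edges by this creation time and, writing $r_i:=\pi_{i-1}\wedge\pi_i$, show that
\[
\P\big(\pi_{i-1}\leftrightarrow\pi_i\,\big|\,\mathrm{PA}_{s_i-1}\big)\leq\frac{C\,(D_{r_i}(s_i-1)+1)}{s_i}.
\]
For VPA this is immediate from the independent-connection rule; for FPA it follows by a union bound over the $m$ candidate edges emanating from the arriving vertex $s_i$, using \eqref{eq:fpa}.

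Next I would establish the classical moment bound $\E[D_v(t)]\leq C(t/v)^{\gamma}$ for $v\leq t$ and $\gamma=1/(\tau-1)$. This is a consequence of the recursion
\[
\E\big[D_v(t)\,\big|\,\mathrm{PA}_{t-1}\big]=D_v(t-1)\Big(1+\frac{\gamma}{t}\Big)+O(1/t),
\]
which iterates to polynomial growth with exponent $\gamma$; equivalently, after recentering and normalising by $(t/v)^{\gamma}$, the degree process is a non-negative approximate supermartingale uniformly bounded in $L^{1}$. The same bound persists under conditioning on any earlier filtration $\mathrm{PA}_{t'-1}$ with $t'\leq t$. The exponent $\gamma=1/(\tau-1)$ is consistent with both $\tau_{m,\delta}$ in \eqref{eq:tau-fpa} and $\tau_f$ in \eqref{eq:tau-vpa}.

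Finally I would iterate: process the edges from latest to earliest creation time using the tower property, and at each step integrate out $D_{r_i}(s_i-1)$ via the moment bound to obtain a contribution of at most $C\,r_i^{-\gamma}s_i^{\gamma-1}$, which is exactly the factor appearing in \eqref{eq:pa-gamma}. The main obstacle is a subtle dependence issue: a single vertex $\pi_j$ can appear as the earlier endpoint of several path-edges, and when a later edge is being integrated out the conditional distribution of $D_{\pi_j}$ has already been biased by the edges already exposed. I would resolve this in two complementary ways. First, each exposed path-edge can inflate the in-degree of any fixed vertex by at most one, so the additive correction can be absorbed into the multiplicative constant $\nu$ of \eqref{eq:pa-gamma}. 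Second, the supermartingale structure of the normalised degree is preserved under conditioning on later events, so the moment bound applies with the same constant at every iteration step. Combining the three steps and carefully tracking constants gives \eqref{eq:pa-gamma} with $\nu$ depending only on $m,\delta$ for FPA and on the parameters of $f$ for VPA.
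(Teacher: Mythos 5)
The paper does not prove Proposition~\ref{prop:pa-gamma} at all: it states the bound as a citation to \cite[Propositions 3.1 and 3.2]{dereich2012typical}, so there is no in-paper proof for your argument to be compared against. Your sketch is therefore an attempt at a standalone derivation, which is a legitimate thing to do, but as written it contains a genuine gap at precisely the point you flag as ``subtle.''

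The gap is in the way you dispose of the dependence between the path-edges. After exposing the latest edge and bounding its conditional probability by $C\big(D_{r_k}(s_k-1)+1\big)/s_k$, you are left needing to bound
\[
\E\Big[\big(D_{r_k}(s_k-1)+1\big)\,\prod_{i<k}\mathbf{1}\{\pi_{j_i}\leftrightarrow\pi_{j_i'}\}\Big],
\]
and this expectation does \emph{not} factor. Your first fix — that an exposed path-edge ``can inflate the in-degree of any fixed vertex by at most one'' — addresses the wrong quantity: what needs to be controlled is not the deterministic increment of the degree caused by that edge, but the \emph{bias} introduced into the law of $D_{r_k}(s_k-1)$ by conditioning on the event that an earlier vertex $s$ chose to attach to $r_k$. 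That conditioning size-biases the degree multiplicatively, not additively (a vertex with higher degree at time $s-1$ is more likely to receive the edge), and showing that this multiplicative penalty is uniformly bounded requires an argument — e.g.\ bounding $\E[D_{r_k}(s-1)^2]/\E[D_{r_k}(s-1)]^2$ uniformly in the vertex label and time, or a negative-association/urn argument — that you do not give. Your second fix — that ``the supermartingale structure of the normalised degree is preserved under conditioning on later events'' — is not true in general: conditioning on a non-adapted event destroys the (super)martingale property, and it is exactly this loss of adaptedness that makes the multi-edge bound nontrivial. In the cited source the two models are treated by genuinely different mechanisms: for the variable-outdegree model one exploits the conditional independence of the out-edges of each arriving vertex, and for the fixed-outdegree model one relies on the exchangeable/P\'olya-urn representation of the degree process, under which the relevant joint moments factor exactly rather than approximately. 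Your tower-property scheme together with the first-moment bound $\E[D_v(t)]\le C(t/v)^\gamma$ is the right skeleton but, without one of these structural inputs, the iteration does not close.

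Two smaller points: the one-edge conditional bound you state for FPA needs $D_{r_i}+m(1+\delta/m)$ (the shifted degree from \eqref{eq:fpa}), not $D_{r_i}+1$, though this only affects the constant; and you should make explicit that $p(\pi_0,\dots,\pi_k)$ is set to zero for non-distinct vertices precisely because a self-avoiding path can revisit no vertex, which is what guarantees that each vertex appears as an endpoint of at most two exposed edges — the finiteness you rely on when trying to absorb the corrections into $\nu$.
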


\noindent For $k>i\ge0$ and a vertex $\pi_{i}\geq\ell_{i,t'}^\sss{(t)}$ and another vertex $\pi_k\in[t']$ we define
\be f_{[i,k)}^\sss{({t,t'})}(\pi_i,\pi_k) :=\sum_{\substack{(\pi_{i+1},\dots,\pi_{k-1})\\\in \CP_{(i,k)}^{\{\pi_i,\pi_k\}}}} p(\pi_{i},\dots,\pi_{k}), \label{eq:lower-f}
\ee
where for a vertex set $\CV\subset[t']$,
$\CP_{(i,k)}^{\CV}$ denotes the set of pairwise disjoint vertex tuples $(\pi_{i+1},\dots,\pi_{k-1})$ such that $\pi_j\geq\ell_{j,t'}^\sss{(t)}$, $\pi_j\notin \CV$ for
all $i<j<k$. Intuitively, $f^\sss{(t,t')}_{[i,k)}(\pi_i,\pi_k)$ is an upper bound for the expected number of $t'$-good paths on the segment $[i,k)$ from $\pi_i$ to $\pi_k$.

We derive upper bounds for the summands in \eqref{eq:lower-total-prob-bound-a} and \eqref{eq:lower-total-prob-bound-b} in terms of $f^\sss{(t,t')}_{[i,k)}$.
\begin{claim}
 Consider the preferential attachment model with power-law parameter $\tau>2$. Let $(\ell_{k,t'}^\sss{(t)})_{k\geq0,t'\geq t}$ be an array satisfying Assumption \ref{ass:ell}. Then for $k,n\geq 2$ and $t'>t$,
 \begin{align}
  \P\big(\CE_\mathrm{bad}^{(q)}(k,t')\big)     & \leq \sum_{x=1}^{\ell_{k,t'}^\sss{(t)}-1}\sum_{i=1}^{k-1}f_{[0,i)}^\sss{(t,t')}(q,t')f_{[i,k)}^\sss{(t,t')}(t',x), \label{eq:lower-prob1-t-upper}                                                                       \\
  \P\big(\CE_\mathrm{short}^{(u,v)}(n,t')\big) & \leq  \sum_{(q_1,q_2)\in\sigma(u,v)}\sum_{x=\ell_{\floor{n/2},t'}^\sss{(t)}}^{t'-1}\sum_{i=1}^{\floor{n/2}-1}f_{[0,\ceil{n/2})}^\sss{(t,t')}(q_1,x)f_{[0,i)}^\sss{(t,t')}(q_2,t')f_{[i,k)}^\sss{(t,t')}(t',x) \nonumber \\
                                               & \hspace{20pt}+ f_{[0,\ceil{n/2})}^\sss{(t,t')}(q_1,t')f_{[0,\floor{n/2})}^\sss{(t,t')}(q_2,t'), \label{eq:lower-prob2-t-upper}
 \end{align}
 while for any $k,n\geq1$ and $t'=t$
 \begin{align}
  \P\big(\CE_\mathrm{bad}^{(q)}(k,t)\big)     & \leq \sum_{x=1}^{\ell_{k,t}^\sss{(t)}-1}f_{[0,k)}^\sss{(t,t)}(q,x), \label{eq:lower-prob1-t-upper-base} \\
  \P\big(\CE_\mathrm{short}^{(u,v)}(n,t)\big) & \leq
  \sum_{(q_1,q_2)\in\sigma(u,v)} \sum_{x=\ell_{\floor{n/2},t'}^\sss{(t)}}^{t}f_{[0,\ceil{n/2})}^\sss{(t,t)}(q_1,x)f_{[0,\floor{n/2})}^\sss{(t,t)}(q_2,x).\label{eq:lower-prob2-t-upper-base}
 \end{align}
 \begin{proof}
  Recall the set of paths $\{\pi_i\goodpathto{i}{k}\pi_k\}$ from Definition \ref{def:good-paths-segment}.
  Then by Markov's inequality, \eqref{eq:lower-f}, and Proposition \ref{prop:pa-gamma}
  \begin{align*}
   \P\big(\big|\{\pi_i\goodpathto{i}{k}\pi_k\}\big|\geq 1\big) \leq \E[|\{x\goodpathto{i}{k}\pi_k\}|] & = \sum_{\substack{(\pi_{i+1},\dots,\pi_{k-1})   \\\in \CP_{(i,k)}^{\{\pi_i,\pi_k\}}}}\P(\pi_{i}\leftrightarrow\dots\leftrightarrow\pi_{k}) \\
                                                                                                      & \leq\sum_{\substack{(\pi_{i+1},\dots,\pi_{k-1}) \\\in \CP_{(i,k)}^{\{\pi_i,\pi_k\}}}} p(\pi_{i},\dots,\pi_{k}) = f_{[i,k)}^\sss{(t,t')}(\pi_i,\pi_k).
  \end{align*}
  Now for concatenated paths, due to the product structure in \eqref{eq:pa-gamma}, and by relaxing the disjointness of sets, we have
  \begin{align}
   \P\big(\big|\{\pi_0\goodpathto{0}{i}\pi_i\}\circ \{\pi_i\goodpathto{i}{k}\pi_k\}\big|\ge 1\big)
    & \leq
   \sum_{\substack{(\pi_{1},\dots,\pi_{i-1})                                                \\\in \CP_{(0,i)}^{\{\pi_0,\pi_i,\pi_k\}}}}
   \sum_{\substack{(\pi_{i+1},\dots,\pi_{k-1})                                              \\\in \CP_{(i,k)}^{\{\pi_0,\dots,\pi_{i}\}}}} p(\pi_0,\dots,\pi_{i})p(\pi_{i},\dots,\pi_{k}) \nonumber\\
    & \leq f^\sss{(t,t')}_{[0,i)}(\pi_0,\pi_i)f^\sss{(t,t')}_{[i,k)}(\pi_i,\pi_k).\nonumber
  \end{align}
  Recall now \eqref{eq:lower-dynamic-bad}, so that \eqref{eq:lower-prob1-t-upper} follows by a union bound and choosing $\pi_0=q, \pi_i=t'$, and $\pi_k=x$. Similarly \eqref{eq:lower-prob2-t-upper} follows
  by union bounds over the rhs in \eqref{eq:lower-dynamic-short}. The bounds \eqref{eq:lower-prob1-t-upper-base} and \eqref{eq:lower-prob2-t-upper-base} follow analogously from their definition in \eqref{eq:lower-bad-def-t} and \eqref{eq:lower-short-def-t}.
 \end{proof}
\end{claim}
\noindent  We establish recursive bounds on $f^\sss{(t,t')}_{[i,k)}$ in the spirit of \cite[Lemma 1]{dereich2012typical}.
Let $(\ell_{k,t'}^\sss{(t)})_{k\geq0,t'\geq t}$ be an array satisfying Assumption \ref{ass:ell} such that $\eta_{j,t'}:=(t'/\ell_{j,t'}^\sss{(t)})\geq \re$ for all $j\geq0$ and $t'\geq t$. Define  for $\gamma:=1/(\tau-1)$ and some $c>1$
\begin{align}
 \alpha^\sss{(t')}_{[0, j)} :=
 \begin{dcases}
  \nu\ell_{0, t'}^{\gamma-1}                                                                              & j=1, \\
  c\big(\alpha^\sss{(t')}_{[0, j-1)}\log(\eta_{j-1, t'}) + \beta^\sss{(t')}_{[0, j-1)}t'^{2\gamma-1}\big) & j>1,
 \end{dcases} \label{eq:alpha} \\
 \beta^\sss{(t')}_{[0, j)} :=
 \begin{dcases}
  \nu\ell_{0, t'}^{-\gamma}                                                                                           & j=1, \\
  c\big(\alpha^\sss{(t')}_{[0, j-1)}\ell_{j-1, t'}^{1-2\gamma} + \beta^\sss{(t')}_{[0, j-1)}\log(\eta_{j-1, t'})\big) & j>1,
 \end{dcases} \label{eq:beta}
\end{align}
similar to the recursions in \cite[Lemma 1]{dereich2012typical}.
The sequence $\big(\alpha^\sss{(t')}_{[0, j)}\big)_{j\geq1}$ is related to the expected number of self-avoiding $t'$-good paths $(\pi_0,\dots,\pi_j)$ of length $j$ from $\pi_0\in\{u,v\}$ to $\pi_j$ such that $\pi_{j-1}>\pi_j$.
The sequence $\big(\beta^\sss{(t')}_{[0, j)}\big)_{j\geq1}$ is related to those paths where $\pi_{j-1}<\pi_j$.
Observe that since $c>1$, $\eta_{j,t'}\geq \re$, and $\alpha_{[0,1)}^\sss{(t)},\beta_{[0,1)}^\sss{(t)}\geq 0$, it follows that
$k\mapsto\alpha_{[0,k)}^\sss{(t)}$ and $k\mapsto\beta_{[0,k)}^\sss{(t)}$ are non-decreasing.
We define for the same constant $c>1$ the non-decreasing sequences
\begin{align}
 \phi^\sss{(t')}_{[i, i+j)} & :=
 \begin{dcases}
  \nu t'^{\gamma - 1}                                                                                        & j=1, \\
  c\big(\phi^\sss{(t')}_{[i, i+j-1)}\log(\eta_{i+j-1, t'}) + \psi^\sss{(t')}_{[i, i+j-1)}t'^{2\gamma-1}\big) & j>1,
 \end{dcases} \label{eq:phi} \\
 \psi^\sss{(t')}_{[i, i+j)} & :=
 \begin{dcases}
  0                                                                                                                        & j=1, \\
  c\big(\phi^\sss{(t')}_{[i, i+j-1)}\ell_{i+j-1, t'}^{1-2\gamma} + \psi^\sss{(t')}_{[i, i+j-1)}\log(\eta_{i+j-1, t'})\big) & j>1.
 \end{dcases}\label{eq:psi}
\end{align}
These sequences are related to the $t'$-good paths emanating from $t'$ that are good on the segment $[i,i+j)$. Observe that the recursions are identical to \eqref{eq:alpha} and \eqref{eq:beta}, except that their initial values are different. This is crucial to give summable error bounds in $t'$ later on. Below, we leave out the superscript $(t')$ for notational convenience, but we stress here that these four sequences are dependent on both $t'$ and $t$.
\begin{claim}[Recursive bounds for number of paths]\label{lemma:number-of-paths}
 Under the same assumptions as Proposition \ref{proposition:lower-bound-graph},
 let $(\ell_{k,t'}^\sss{(t)})_{k\geq0,t'\geq t}$ be an array satisfying Assumption \ref{ass:ell}. Let $\eta_{j, t'}=t'/\ell_{j, t'}^\sss{(t)}$ and $\gamma=1/(\tau-1)$.
 For sufficiently large $c=c(\tau), \nu=\nu(\tau)$ in \eqref{eq:alpha}, \eqref{eq:beta}, \eqref{eq:phi}, and \eqref{eq:psi}, it holds that
 \be
 f^\sss{(t,t')}_{[i,i+j)}(t',x)
 \leq x^{-\gamma}\phi_{[i, i+j)} + \ind{x>\ell_{i+j-1, t'}^\sss{(t)}}x^{\gamma-1}\psi_{[i, i+j)}.\label{eq:lower-paths-from-t}
 \ee
 Moreover,
 \begin{equation}
  f^\sss{(t,t')}_{[0,j)}(q,x)
  \leq \ind{x<t'}x^{-\gamma}\alpha_{[0, j)} + \ind{x>\ell_{j-1, t'}^\sss{(t)}}x^{\gamma-1}\beta_{[0, j)}. \label{eq:lower-paths-from-x}
 \end{equation}
\end{claim}
\noindent
We refer to the appendix for the proof, which follows by induction from arguments analogous to \cite[Lemma 1]{dereich2012typical}.
As a consequence of \eqref{eq:lower-paths-from-x}, we have for $q\in\{u,v\}$
\[
 f_{[0,i)}^\sss{(t,t')}(q,t') \leq \ind{t'<t'}t'^{-\gamma}\alpha_{[0, j)} + \ind{t'>\ell_{j-1, t'}^\sss{(t)}}t'^{\gamma-1}\beta_{[0, j)} = t'^{\gamma-1}\beta_{[0, j)}.
\]
Moreover, since $x<\ell_{k,t'}^\sss{(t)}$ implies that also $x<\ell_{k-1,t'}^\sss{(t)}$ since $k\mapsto \ell_{k,t'}^\sss{(t)}$ is nonincreasing, for $x<\ell_{k,t'}^\sss{(t)}$ it follows from \eqref{eq:lower-paths-from-t} that
\[
 f_{[i,k)}^\sss{(t)}(t', x) \leq x^{-\gamma}\phi_{[i,i+j)} + \ind{x>\ell_{k-1,t'}^\sss{(t)}}\psi_{[i,k)} = x^{-\gamma}\phi_{[i,i+j)}.
\]
Hence, we can bound the summands in \eqref{eq:lower-total-prob-bound-a} using Claim \ref{lemma:number-of-paths} to obtain for $k\geq 2$, $t'>t$,
\be
\P(\CE_{\mathrm{bad}}^{(q)}(k,t'))
\leq
\sum_{x=1}^{\ell_{k,t'}^\sss{(t)}-1}\sum_{i=1}^{k-1}f_{[0,i)}^\sss{(t,t')}(q, t')f_{[i,k)}^\sss{(t,t')}(t',x)
\leq
t'^{\gamma-1}\sum_{x=1}^{\ell_{k,t'}^\sss{(t)}-1}x^{-\gamma} \sum_{i=1}^{k-1}\beta_{[0,i)}\phi_{[i,k)}. \label{eq:lower-error-1-bound-1}
\ee
Similarly to \eqref{eq:lower-error-1-bound-1} we bound the summands in \eqref{eq:lower-total-prob-bound-b} from above using \eqref{eq:lower-prob2-t-upper} and \emph{replacing} the first sum over the permutation $\sigma(u,v)$ in \eqref{eq:lower-prob2-t-upper} by a factor two, i.e., for $n\geq2$ and $t'>t$,
\begin{align}
 \P\big( & \CE_{\text{short}}^{(u,v)}(n,t')\big)
 \leq
 2 t'^{\gamma-1}\sum_{x=\ell_{\floor{n/2},t'}}^{t'}\big(\ind{x<t'}\alpha_{[0,\ceil{n/2})}x^{-\gamma} + \beta_{[0,\ceil{n/2})}x^{\gamma-1}\big) \nonumber                                                                                         \\
         & \cdot\Big(\ind{x=t'}\beta_{[0,\floor{n/2})}+\ind{x<t'}\sum_{i=1}^{\floor{n/2}-1}\left(\beta_{[0,i)}\phi_{[i,\floor{n/2})}x^{-\gamma} +  \beta_{[0,i)}\psi_{[i,\floor{n/2})}x^{\gamma-1}\right)\Big). \label{eq:lower-error-2-bound-1}
\end{align}
Both \eqref{eq:lower-error-1-bound-1} and \eqref{eq:lower-error-2-bound-1} contain convolutions of the sequence $\beta_{[0,i)}$ with $\phi_{[i,k)}$ and $\psi_{[i,k)}$. This motivates to bound these convolutions in terms of the \emph{original} sequences $\alpha_{[0,k)}$ and $\beta_{[0,k)}$.
\begin{claim}\label{claim:convolution}
 Let $\phi_{[i,k)}, \psi_{[i,k)}, \alpha_{[0,k)}, \beta_{[0,k)}$ be as in \eqref{eq:phi}, \eqref{eq:psi}, \eqref{eq:alpha}, \eqref{eq:beta}, respectively.
 Then there exists $C>0$ such that for $k\geq 2$
 \begin{align}
  B_k^{\psi} & := \sum_{i=1}^{k-1}\beta_{[0,i)}\psi_{[i,k)} \leq C\left(k-2\right) \beta_{[0,k)}t'^{-\gamma} \label{eq:bk-psi}, \\
  B_k^{\phi} & := \sum_{i=1}^{k-1}\beta_{[0,i)}\phi_{[i,k)} \leq C(k-1) \alpha_{[0,k)}t'^{-\gamma}.\label{eq:bk-phi}
 \end{align}
 \begin{proof}
  We prove by induction.
  We initialize the induction for $k=2$, the smallest value of $k$ for which the sums in \eqref{eq:bk-phi} and \eqref{eq:bk-psi} are non-empty. Indeed, then \eqref{eq:bk-psi} holds  by the initial value of $\psi_{[i,i+1)}=0$ in \eqref{eq:psi}, i.e.,
  \[
   B_2^{\psi} = \beta_{[0,1)}\psi_{[1,2)} = \beta_{[0,1)}\cdot 0 \leq C\cdot 0\cdot \beta_{[0,2)}t'^{-\gamma}.
  \]
  For $k=2$ in \eqref{eq:bk-phi} we substitute the recursion \eqref{eq:alpha} on $\alpha_{[0,2)}$. Thus, we have to show that
  \[
   B_2^{\phi} = \beta_{[0,1)}\phi_{[1,2)} \leq cC\big(\alpha_{[0,1)}\log(\eta_{1,t'}) + \beta_{[0,1)}t'^{2\gamma-1}\big)t'^{-\gamma}.
  \]
  Using the initial values in \eqref{eq:alpha}, \eqref{eq:beta}, and \eqref{eq:phi}, this is indeed true for $C\geq \nu/c$, i.e.,
  \begin{align*}
   B_2^{\phi} =
   \nu\ell_{0,t'}^{-\gamma}\cdot \nu t'^{\gamma-1}
   \leq
   cC\big(\nu\ell_{0,t'}^{\gamma-1}\log(t'/\ell_{1,t'})t'^{-\gamma} + \nu\ell_{0,t'}^{-\gamma}t'^{\gamma-1}\big).
  \end{align*}
  Now, we advance the induction.
  To this end, one can derive the following recursions using \eqref{eq:phi} and \eqref{eq:psi}:
  \begin{align}
   B_{k+1}^{\psi} & = cB_k^{\phi}\ell_{k,t'}^{1-2\gamma} + cB_k^{\psi}\log(\eta_{k,t'}),                         & B_2^{\psi} & =0 \label{eq:induction-psi},                                       \\
   B_{k+1}^{\phi} & = \nu t'^{\gamma-1}\beta_{[0,k)} + c\log(\eta_{k,t'})B_k^{\phi} + cB_k^{\psi}t'^{2\gamma-1}, & B_2^{\phi} & =\nu^2 t'^{\gamma-1}\ell_{0,t'}^{-\gamma}.\label{eq:induction-phi}
  \end{align}
  The first term in \eqref{eq:induction-phi} is a result of the \emph{non-zero} initial value of $\phi_{[i,i+1)}$ in \eqref{eq:phi}, while $\psi_{[i,i+1)}=0$, so that there is no such term in \eqref{eq:induction-psi}.
  Since the two recursions depend only on each other's previous values, we can carry out the two induction steps simultaneously. By the two induction hypotheses \eqref{eq:bk-psi} and \eqref{eq:bk-phi}, and the definition of $\beta_{[0,k+1)}$ in \eqref{eq:beta}, we have that
  \begin{align}
   B_{k+1}^{\psi} = cB_k^{\phi}\ell_{k,t'}^{1-2\gamma} + cB_k^{\psi}\log(\eta_{k,t'})
    & \leq Cc\big((k-1)\alpha_{[0,k)}\ell_{k,t'}^{1-2\gamma} + (k - 2)\beta_{[0,k)}\log(\eta_{k,t'})\big)t'^{-\gamma} \nonumber \\
    & \leq C(k-1)\beta_{[0,k+1)}t'^{-\gamma}, \nonumber
  \end{align}
  proving \eqref{eq:induction-psi}. For \eqref{eq:induction-phi}, we assume that $cC\geq \nu$ so that using the induction hypotheses and \eqref{eq:alpha} the proof is finished, i.e.,
  \begin{align}
   B_{k+1}^\phi & = c\log(\eta_{k,t'})B_k^{\phi} + cB_k^{\psi}t'^{2\gamma-1} + \nu t'^{\gamma-1}\beta_{[0,k)}\nonumber                                               \\
                & \leq c\log(\eta_{k,t'})C(k-1) \alpha_{[0,k)}t'^{-\gamma} + ct'^{\gamma-1}C\left(k-2\right) \beta_{[0,k)} + \nu t'^{\gamma-1}\beta_{[0,k)}\nonumber \\
                & \leq C (k-1)\alpha_{[0,k+1)}t'^{-\gamma}. \nonumber
  \end{align}
 \end{proof}
\end{claim}
We combine Claim \ref{claim:convolution} with  \eqref{eq:lower-error-1-bound-1} and \eqref{eq:lower-error-2-bound-1} to arrive to the proof of Lemma \ref{cor:lower-conv}.

\begin{proof}[Proof of Lemma \ref{cor:lower-conv}]
 We start with \eqref{eq:lower-cor-1}. Recall for $q\in\{u,v\}$ the bound on $\P\big(\CE_{\text{bad}}^{(q)}(k,t')\big)$ in \eqref{eq:lower-error-1-bound-1} and observe that \eqref{eq:bk-phi} implies \eqref{eq:lower-cor-1}, since there is $C>0$ such that for $k\geq2$
 \begin{align}
  \P\big(\CE_{\text{bad}}^{(q)}(k,t')\big)\leq  t'^{\gamma-1}\sum_{i=1}^{k-1}\beta_{[0,i)}\phi_{[i,k)}\sum_{x=1}^{\ell_{k,t'}^{\sss{(t)}}-1}x^{-\gamma}
  \leq C(k-1)t'^{-1}\alpha_{[0,k)}\sum_{x=1}^{\ell_{k,t'}^{\sss{(t)}}-1}x^{-\gamma}. \nonumber
 \end{align}
 For \eqref{eq:lower-cor-2}, we recall the bound \eqref{eq:lower-error-2-bound-1} and bound using \eqref{eq:bk-phi} and \eqref{eq:bk-psi} the factor on the second line in  \eqref{eq:lower-error-2-bound-1} by
 \[\ind{x=t'}\beta_{[0,\floor{n/2})} + \ind{x<t'}Ct'^{-\gamma}\big((\floor{n/2}-1)\alpha_{[0,\floor{n/2})}x^{-\gamma} + (\floor{n/2}-2)\beta_{[0,\floor{n/2})}x^{\gamma-1}\big).\]
 Now \eqref{eq:lower-cor-2} follows by distinguishing the summands in \eqref{eq:lower-error-2-bound-1} between $x<t'$ and $x=t'$, and using that $j\mapsto\alpha_{[0,j)}$ and $j\mapsto\beta_{[0,j)}$ are non-decreasing so that we may round up their indices to $\ceil{n/2}$ to obtain the square.
 Lastly, the bounds \eqref{eq:lower-cor-3} and \eqref{eq:lower-cor-4} follow directly from \eqref{eq:lower-prob1-t-upper-base}, \eqref{eq:lower-prob2-t-upper-base}, and \eqref{eq:lower-paths-from-x}, where we again round up the indices to obtain the square.
\end{proof}

\subsection{Setting the birth-threshold sequence}
After the event decomposition in Lemma \ref{cor:lower-event-splitting} and the bounds on the individual summands in Lemma \ref{cor:lower-conv}, we are ready to choose the birth-threshold array $(\ell_{k, t'}^\sss{(t)})_{k\geq0,t'\geq t}$ to ensure that the sums in \eqref{eq:lower-cor-1}, \eqref{eq:lower-cor-2}, \eqref{eq:lower-cor-3}, and \eqref{eq:lower-cor-4} are sufficiently small.
The right choice of $(\ell_{k, t'}^\sss{(t)})_{k\geq0,t'\geq t}$ will make the error probabilities in \eqref{eq:lower-total-prob-bound-a} and \eqref{eq:lower-total-prob-bound-b} arbitrarily small.
Fix $\delta'=\delta'(\delta)>0$ that we choose later to be sufficiently small. We define
\begin{numcases}{\ell_{k, t'}^\sss{(t)}:=\label{eq:ell-total}}
 \lceil\delta' t\rceil & $k=0$, \label{eq:ell-init}\\
 \argmax_{x\in\mathbb{N}\backslash\{0,1\}}\left\{\alpha_{[0,k)} x^{1-\gamma}\overset{(\circledast)}{\leq} \left(k\log(t')\right)^{-3}\right\} & $k\geq 1$.
 \label{eq:ell}
\end{numcases}
Since $k\mapsto \alpha_{[0,k)}$ is non-decreasing and $1-\gamma>0$ by \eqref{eq:ell}, $k\mapsto\ell_{k,t'}^\sss{(t)}$ must be nonincreasing in both indices.
Using the upper bound on $t'/\ell_{k,t'}^\sss{(t)}$ in Lemma \ref{lemma:eta-k-appendix} in the appendix, one can verify
that $\ell_{k,t'}^\sss{(t)}\geq 2$ for all $k\leq\underline{K}_{t,t'}$ if $t$ is sufficiently large. Hence, the array $(\ell_{k,t'}^\sss{(t)})_{k\geq 0,t'\geq t}$ is well-defined.
The choice of $(\ell_{k,t'}^\sss{(t)})_{k\geq 0,t'\geq t}$ in \eqref{eq:ell-total} is similar to the choice in \cite[Proof of Theorem 2]{dereich2012typical} for $t'=t$.
The main difference is the extra $\log^{-3} (t')$ factor on the rhs in \eqref{eq:ell}. This factor, in combination with the $1/t'$-factor from Lemma \ref{cor:lower-conv} yields a summable error in $t'$ in
\eqref{eq:lower-total-prob-bound-a} and \eqref{eq:lower-total-prob-bound-b}.
We comment that the additional $\log^{-3} (t')$ factor could be changed to another slowly varying function, but the choice has to be $o((t')^\epsilon)$ for all $\epsilon>0$, otherwise the entries of $(\ell_{k,t'}^\sss{(t)})$ would not be at least two, whence the array would be ill-defined.

We are ready to prove Proposition \ref{proposition:lower-bound-graph}.
\begin{proof}[Proof of Proposition \ref{proposition:lower-bound-graph}]
 To prove \eqref{eq:lower-bound-graph}, due to Corollary \ref{cor:lower-event-splitting},
 we need to show that the rhs in \eqref{eq:lower-total-prob-bound-a} and \eqref{eq:lower-total-prob-bound-b} is at most $\delta$,  for $t$ sufficiently large. To keep notation light, we write $\ell_{k,t'}:=\ell_{k,t'}^\sss{(t)}$.
 First, we consider the terms in \eqref{eq:lower-total-prob-bound-a} where $t'=t$.
 Recalling the definition of $\CE_{\text{bad}}^{(q)}(0,t)$ from \eqref{eq:lower-bad-def-init} and the upper bound on its probability in \eqref{eq:lower-cor-3}, we have for $q\in\{u,v\}$
 \begin{equation}
  \sum_{k=0}^{\underline{K}_{t,t}}\P\big(\CE_{\text{bad}}^{(q)}(k, t)\big)
  \leq
  \delta' + O(1/t) +
  \sum_{k=1}^{\underline{K}_{t,t}}\alpha_{[0,k)}\sum_{x=1}^{\ell_{k,t}-1}x^{-\gamma}, \label{eq:lower-proof-t1}
 \end{equation}
 where the term $\delta'+O(1/t)$ comes from the probability that $q$, the uniform vertex in $[t]$, is born before $\ell_{0,t}= \ceil{\delta't}$.
 Now approximating the last sum in \eqref{eq:lower-proof-t1} by an integral and using $(\circledast)$ in \eqref{eq:ell}, we have for some $c_1>0$, $q\in\{u,v\}$
 \begin{align}
  \sum_{k=0}^{\underline{K}_{t,t}}\P\big(\CE_{\text{bad}}^{(q)}(k, t)\big)
   & \leq
  \delta' + o(1) + c_1\sum_{k=1}^{\underline{K}_{t,t}}\alpha_{[0,k)}\ell_{k,t}^{1-\gamma} \nonumber                         \\
   & = \delta' + o(1) + c_1\log^{-3}(t)\sum_{k=1}^{\underline{K}_{t,t}} k^{-3} = \delta' + o(1).  \label{eq:lower-proof-t2}
 \end{align}
 We move on to the terms on the rhs in \eqref{eq:lower-total-prob-bound-a} for $t'>t$ and show that their sum is of order $O(\delta')$.
 Recall for $q\in\{u,v\}$ the bound on $\P\big(\CE_{\text{bad}}^{(q)}(k,t')\big)$ in \eqref{eq:lower-cor-1},
 and observe that there is $C'>0$ such that, approximating the sum over $x$ in \eqref{eq:lower-cor-1} by an integral gives for $t'>t$, $k\geq2$
 \begin{align*}
  \P\big(\CE_{\text{bad}}^{(q)}(k,t')\big) \leq C't'^{-1}(k-1)\alpha_{[0,k)}\ell_{k,t'}^{1-\gamma}
  \leq \frac{C'}{k^2t'\log^3(t')}.
 \end{align*}
 The last inequality follows from $(\circledast)$ in \eqref{eq:ell}.
 The rhs is summable in $k$ and $t'$ so that, only considering the tail of the sum,
 \begin{align}
  \sum_{t'=t+1}^{\infty}\sum_{k=2}^{\underline{K}_{t,t'}}\P\big(\CE_{\text{bad}}^{(q)}(k,t')\big)
  =O\big(\log^{-2}(t)\big).\label{eq:lower-error-1-bound-final}
 \end{align}
 Combining  \eqref{eq:lower-proof-t2} and \eqref{eq:lower-error-1-bound-final}, this establishes that the rhs in \eqref{eq:lower-total-prob-bound-a} is at most $2\delta' + o(1)$ for $t$ sufficiently large, when summed over $q\in\{u,v\}$.

 We continue by proving that the summed error probability in \eqref{eq:lower-total-prob-bound-b} is small. First we consider the terms where $t'>t$. Recall \eqref{eq:lower-cor-2}.
 We use now that $(a+b)^2\leq 2(a^2+b^2)$ for $a,b>0$, so that there exists $C'>0$ such that
 \begin{align}
  \P\big(\CE_{\text{short}}^{(u,v)}(n,t')\big)
   & \leq
  2t'^{2\gamma-2}\beta_{[0,\ceil{n/2})}^2+ \frac{C'n}{t'}\beta_{[0,\ceil{n/2})}^2 \sum_{x=\ell_{\floor{n/2},t'}}^{t'-1}x^{2\gamma-2} \nonumber \\
   & \hspace{30pt} + \frac{C'n}{t'} \alpha_{[0,\ceil{n/2})}^2\sum_{x=\ell_{\floor{n/2},t'}}^{t'-1}x^{-2\gamma}\nonumber                        \\
   & =: T_{11}(n,t') + T_{12}(n,t') + T_{2}(n,t').\label{eq:lower-short-1}
 \end{align}
 Approximating the sums by integrals and using that $k\mapsto\ell_{k,t'}$ is nonincreasing, there exists a different $C'>2$ such that, relaxing the first two terms in \eqref{eq:lower-short-1},
 \begin{align*}
  T_{11}(n,t') + T_{12}(n,t')\leq 2C'n\beta_{[0,\ceil{n/2})}^2 t'^{2\gamma-2}, \qquad
  T_2(n,t') \leq C'n \alpha^2_{[0,\ceil{n/2})}\ell_{\ceil{n/2},t'}^{1-2\gamma}t'^{-1}.
 \end{align*}
 For $T_1:=T_{11}+T_{12}$, by \eqref{eq:alpha}
 it holds that $c\beta_{[0,\ceil{n/2})}\leq \alpha_{[0,\ceil{n/2}+1)}t'^{1-2\gamma}$, yielding by $(\circledast)$ in \eqref{eq:ell}
 \begin{align}
  T_1(n,t') & \leq 2C'n\alpha^2_{[0,\ceil{n/2}+1)}t'^{-2\gamma}/c^2 = 2C'n\Big(\alpha_{[0,\ceil{n/2}+1)} \ell_{\ceil{n/2}+1,t'}^{1-\gamma}\Big)^2 \left(t'/\ell_{\ceil{n/2}+1,t'}\right)^{2-2\gamma} (ct')^{-2} \nonumber \\
            & \leq \frac{2C'n}{(\ceil{n/2}+1)^6\log^6(t')(ct')^2}\left(t'/\ell_{\ceil{n/2}+1,t'}\right)^{2-2\gamma}.\label{eq:lower-short-3}
 \end{align}
 Rewriting $T_2$ similarly,
 \be
 T_2(n,t')\leq C'n\Big(\alpha_{[0,\ceil{n/2})}\ell_{\ceil{n/2},t'}^{1-\gamma}\Big)^2\big(t'/\ell_{\ceil{n/2},t'}\big)t'^{-2}
 \leq \frac{C'n}{\ceil{n/2}^6\log^6(t')t'^2}\left(t'/\ell_{\ceil{n/2},t'}\right).\label{eq:lower-short-4}
 \ee
 Recall that $\ell_{\ceil{n/2}+1,t'}\geq 2$ as mentioned after \eqref{eq:ell}. Thus, both \eqref{eq:lower-short-3} and \eqref{eq:lower-short-4} are summable in $t'$ and $n$. They tend to zero as $t$ tends to infinity, using for \eqref{eq:lower-short-3} that $2-2\gamma<1$.

 It is left to verify that the terms where $t'=t$ in \eqref{eq:lower-total-prob-bound-b} are of order $O(\delta')$ when summed over $n$.
 For this the same reasoning holds as above, starting after \eqref{eq:lower-error-1-bound-final}, where the initial bound is the one in \eqref{eq:lower-cor-4} instead of \eqref{eq:lower-cor-2}. Here, all terms are a factor $t'=t$ larger than before. This yields that
 \be
 \sum_{n=0}^{\underline{K}_{t,t}}\P\big(\CE_{\text{short}}^{(u,v)}(n,t)) = o(1) + \frac{C'}{\log^6(t)t}(t/\ell_{\underline{K}_{t,t}+1,t'})\sum_{n=1}^{\underline{K}_{t,t}}\ceil{n/2}^{-6} = O(\log^{-6}(t))=o(1). \nonumber
 \ee
 Recalling the conclusions after \eqref{eq:lower-error-1-bound-final} and \eqref{eq:lower-short-4}, we conclude that the error terms in \eqref{eq:lower-total-prob-bound-a} and \eqref{eq:lower-total-prob-bound-b} are of order $O(\delta')$, so that \eqref{eq:lower-bound-graph} follows by Corollary \ref{cor:lower-event-splitting}, when $\delta'$ is chosen sufficiently small so that the error probabilities are at most $\delta$, and $\eta_{0,t'}=t/\ell_{0,t}\geq \re$ as required by the definition of $\alpha_{[0,j)}$ and $\beta_{[0,j)}$ before \eqref{eq:alpha}.
\end{proof}
\subsection{Extension to weighted distances}
We extend the result on graph distances from Proposition \ref{proposition:lower-bound-graph} to weighted distances, refining \cite{jorkom2019weighted}.
For this we introduce the graph neighbourhood and its boundary.
\begin{definition}[Graph neighbourhoods]
 Let $x$ be a vertex in $\mathrm{PA}_t$. Its graph neighbourhood of radius $R\in\mathbb{N}$ at time $t$, denoted by $\CB_G^{(t)}(x,R)$, and its neighbourhood  boundary, $\partial\CB_G^{(t)}(x,R)$, are defined as
 \begin{equation*}
  \CB_G^{(t)}(x,R):=\{y\in [t]: d_G^{(t)}(x,y)\leq R\}, \qquad \partial\CB_G^{(t)}(x,R):=\{y\in [t]: d_G^{(t)}(x,y)= R\}.\label{eq:graph-neighbourhood}
 \end{equation*}
\end{definition}
\noindent Recall $Q_{t,t'}$ from \eqref{eq:qt-kt}.
\begin{proposition}[Lower bound weighted distance]\label{proposition:lower-bound-weighted}
 Consider the preferential attachment model with power-law exponent $\tau\in(2,3)$. Equip every edge upon creation with an i.i.d.\ copy of the non-negative random variable $L$. Let $u, v$ be two typical vertices in $\mathrm{PA}_t$.
 Then for any $\delta>0$, there exists $M_L>0$ such that
 \be
 \P\big(\exists t'\geq t: d_L^{(t')}(u,v) \leq 2Q_{t,t'}-2M_L\big) \leq \delta.\nonumber
 \ee
\end{proposition}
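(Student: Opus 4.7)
The plan is to leverage the graph-distance lower bound of Proposition \ref{proposition:lower-bound-graph}, convert it into a weighted-distance lower bound via a level-crossing argument in the spirit of \cite{jorkom2019weighted}, and then reduce the supremum over $t'\geq t$ to a discrete subsequence of times using the monotonicity $t'\mapsto d_L^{(t')}(u,v)$.

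First I would fix $M_G=M_G(\delta)$ so that, by Proposition \ref{proposition:lower-bound-graph}, the event $\CA_G:=\{d_G^{(t')}(u,v)\geq 2\underline{K}_{t,t'}\text{ for all }t'\geq t\}$ with $\underline{K}_{t,t'}:=K_{t,t'}-M_G$ has probability at least $1-\delta/2$. On $\CA_G$ the balls $\CB_G^{(t')}(u,\underline{K}_{t,t'})$ and $\CB_G^{(t')}(v,\underline{K}_{t,t'})$ are disjoint, so any $(u,v)$-path in $\mathrm{PA}_{t'}$ must contain, for each $q\in\{u,v\}$ and each $k\leq\underline{K}_{t,t'}$, at least one edge whose endpoints lie at graph-distances $k-1$ and $k$ from $q$; the edges harvested from the two choices of $q$ are automatically disjoint because the balls are disjoint. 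Letting $W_k^{(q,t')}$ denote the minimum weight among these level-crossing edges gives
\[
d_L^{(t')}(u,v)\ \geq\ \sum_{q\in\{u,v\}}\sum_{k=1}^{\underline{K}_{t,t'}}W_k^{(q,t')}\qquad\text{on }\CA_G.
\]

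Second, I would lower bound each $W_k^{(q,t')}$ using the i.i.d.\ edge-weight structure. A conditional union bound yields $\P\big(W_k^{(q,t')}\leq F_L^{(-1)}(y/N_k^{(q,t')})\,\big|\,\mathrm{PA}_{t'}\big)\leq y$, where $N_k^{(q,t')}:=\sum_{w\in\partial\CB_G^{(t')}(q,k-1)}D_w(t')$ bounds the number of candidate edges. The static PA analysis of \cite{jorkom2019weighted} provides, with failure probability exponentially small in $k$, the upper bound $N_k^{(q,t')}\leq\exp((\tau-2)^{-k/2})$, from which $W_k^{(q,t')}\geq F_L^{(-1)}(\exp(-(\tau-2)^{-k/2}))$, i.e.\ precisely the $k$-th summand of $Q_{t,t'}$. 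Summing over $k$ and $q\in\{u,v\}$ then gives $d_L^{(t')}(u,v)\geq 2Q_{t,t'}-2M_L$ at a fixed $t'$, with total failure probability $\vareps(M_L)\to 0$ as $M_L\to\infty$, uniformly in $t'$.

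Finally, to pass from a single $t'$ to all $t'\geq t$, one uses that $t'\mapsto K_{t,t'}$ (and hence $t'\mapsto Q_{t,t'}$) is piecewise constant and drops only at the thresholds $T_k:=t^{1+(\tau-2)^{k}}$, while $t'\mapsto d_L^{(t')}(u,v)$ is non-increasing. It therefore suffices to verify the bound at each of the $O(\log\log t/|\log(\tau-2)|)$ transition times $T_k$, and a union bound together with a sufficiently large choice of $M_L$ closes the argument. The main obstacle is the uniform control of $N_k^{(q,t')}$ along the subsequence $(T_k)$: as $t'$ grows, new vertices may enter the graph neighbourhood around $q$ and inflate the degree sum, so the static bounds of \cite{jorkom2019weighted} at time $t$ alone are insufficient. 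The remedy is a martingale-type control of the degree evolution of vertices close to $q$, in the same spirit as the control of $D_{q_{t,0}}(t')$ used in the upper-bound proof of Section \ref{sec:upper-bound}.
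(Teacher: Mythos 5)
Your high-level strategy matches the paper's: condition on the graph-distance lower bound of Proposition \ref{proposition:lower-bound-graph} so the balls around $u$ and $v$ are disjoint, decompose the weighted distance into minima over level-crossing edges, bound the number of candidate edges at each level, apply the minimum-of-i.i.d.\ bound, and reduce the supremum over $t'\geq t$ to a discrete subsequence of times. However, there is a genuine gap at the step you yourself flag as ``the main obstacle.''

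You propose to control $N_k^{(q,t')}=\sum_{w\in\partial\CB_G^{(t')}(q,k-1)}D_w(t')$ uniformly over $t'$ by ``martingale-type control of the degree evolution of vertices close to $q$, in the same spirit as'' Lemma \ref{lemma:degree-lowerbound}. This does not work: Lemma \ref{lemma:degree-lowerbound} tracks the degree of a \emph{single, fixed} vertex and gives a \emph{lower} bound. Here you need an \emph{upper} bound on a sum over the boundary set $\partial\CB_G^{(t')}(q,k-1)$, which is itself a random, growing set (new vertices enter it as $t'$ increases), so you would have to simultaneously control set growth and degree growth — a martingale for one vertex does not transfer. What the paper actually does is re-apply the path-counting machinery at each $t_i$: Lemma \ref{lemma:lower-neighbourhoods} bounds $|\partial\CB_G^{(t_i)}(q,k)|$ conditional on $\CE_{\mathrm{good}}$ via the same functions $f^{\sss{(t,t_i)}}_{[0,k)}$ used in the graph-distance lower bound, yielding a bound $m_{i,k}^{\sss{(t)}}(B)=\exp(2B(1\vee\log(t_i/t))(\tau-2)^{-k/2})$ with failure probability $\exp(-B(1\vee\log(t_i/t)))$, which is summable in $i$ because $\log(t_i/t)\asymp(\tau-2)^{-i}$. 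Note that the paper also works with the \emph{size} of the boundary (so that $|\partial\CB_G(q,k)|\cdot|\partial\CB_G(q,k+1)|$ bounds the edge count), avoiding the extra difficulty of controlling degrees on top of set sizes that your choice of $N_k^{(q,t')}$ would entail. Relatedly, your static bound ``$N_k^{(q,t')}\leq\exp((\tau-2)^{-k/2})$'' is missing the essential $(1\vee\log(t'/t))$ factor; without it the resulting lower bound would not track $Q_{t,t'}$ as $t'$ grows.

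Finally, a smaller computational error: your transition times $T_k=t^{1+(\tau-2)^{k}}$ are not the times at which $K_{t,t'}$ drops. From the definition of $K_{t,t'}$ in \eqref{eq:qt-kt}, the drops occur at $t_i\approx t\exp((\tau-2)^{-i})$ (equivalently $\log(t_i/t)\approx(\tau-2)^{-i}$), not at $\log(T_k/t)=(\tau-2)^{k}\log t$; your sequence is off by a $\log t$ factor in the exponent and is indexed in the wrong direction. This is fixable, but with your indexing the subsequence does not cover $[t,\infty)$ in a way consistent with the drop structure of $K_{t,t'}$.
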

\begin{proof}
 Fix $\delta'$ sufficiently small. Define
 \be
 \CE_{\text{good}}(t) := \bigcap_{t'=t}^{\infty}\Bigg(\bigcap_{k=0}^{\underline{K}_{t,t'}}\bigcap_{q\in\{u,v\}}\neg \CE_{\text{bad}}^{(q)}(k,t')\Bigg)
 \cap
 \Bigg(\bigcap_{n=0}^{2\underline{K}_{t,t'}}\neg \CE_{\text{short}}^{(u,v)}(n,t')\Bigg),\nonumber
 \ee
 for $\underline{K}_{t,t'}:=K_{t,t'}-M_G$, where $M_G>0$ is such that the above event holds with probability at least $1-\delta'$ by the proof of Proposition \ref{proposition:lower-bound-graph}.
 Define the conditional probability measure  $\P_{\mathrm{g}}(\cdot):=\P(\,\cdot\mid\CE_{\text{good}}(t))$.
 On the event $\CE_{\text{good}}(t)$, $d_G^{(t')}(u,v)>2\underline{K}_{t,t'}$ for all $t'\geq t$. Hence, at all times $t'\geq t$ also the graph neighbourhoods of $u$ and $v$ of radius $\underline{K}_{t,t'}$ are disjoint, i.e.,
 \be
 \P_{\text{g}}\Bigg(\bigcap_{t'=t}^{\infty} \Big\{\CB_G^{(t')}(u, \underline{K}_{t,t'}) \cap \CB_G^{(t')}(v, \underline{K}_{t,t'}) = \varnothing\Big\} \Bigg) = 1.\nonumber
 \ee
 Since any path connecting $u$ and $v$ has to pass through the boundary of the graph neighbourhoods,  $\P_{\mathrm{g}}$-a.s.\ for all $t'$,
 \be
 d_L^{(t')}(u, v)
 \geq \sum_{q\in\{u,v\}} d_L^{(t')}\big(q, \partial \CB_G(q, \underline{K}_{t,t'})\big)
 \geq \sum_{q\in\{u,v\}}\sum_{k=0}^{\underline{K}_{t,t'}-1} d_L^{(t')}\big(\partial \CB_G(q, k), \partial \CB_G(q, k+1)\big),\label{eq:lower-bnd-to-bnd}
 \ee
 where for two sets of vertices $\CV,\CW\subset [t']$ we define $d_L^{(t')}(\CV,\CW):=\min_{v\in\CV,w\in\CW}d_L^{(t')}(v,w)$.
 This leaves to show that, for some $M_L=M_L(\delta')$, $q\in\{u,v\}, C>0$,
 \be
 \P_{\mathrm{g}}\bigg(\exists t'\geq t: \sum_{k=0}^{\underline{K}_{t,t'}-1} d_L^{(t')}\big(\partial \CB_G(q, k), \partial \CB_G(q, k+1)\big)\leq Q_{t,t'} - M_L\bigg) \leq C\delta'.\label{eq:lower-weighted-inproof-1}
 \ee
 We argue in three steps: we prove that it is sufficient to consider the error probabilities only along a specific subsequence $(t_i)_{i\geq0}$ of times.
 This is needed to obtain a summable error bound in $t'$.
 Similarly to \cite[Proposition 4.1]{jorkom2019weighted}, we prove along the subsequence $(t_i)_{i\geq 0}$ an upper bound on the sizes of the graph neighbourhood boundaries of $u$ and $v$ up to radius $\underline{K}_{t,t_i}$.
 This allows to bound the minimal weight on an edge between vertices at distance $k$ and $k+1$ from $q\in\{u,v\}$.

 By the definition of  $Q_{t,t'}$ and $K_{t,t'}$ in \eqref{eq:kt} and \eqref{eq:qt-kt}, due to the integer part, $\underline{K}_{t,t'}$ and the rhs between brackets in \eqref{eq:lower-weighted-inproof-1} decrease at the times
 \begin{equation}
  t_i := \min\{t': K_{t,t}- K_{t,t'} = 2i\},\qquad \text{ for }i\in\{0,\dots,K_{t,t}/2\},\label{eq:ti}
 \end{equation}
 while the lhs between brackets in \eqref{eq:lower-weighted-inproof-1} may decrease for any $t'\geq t$. Because the addition of new vertices can create new (shorter) paths, we have for $i\geq 1$
 \begin{align}
  \big\{\exists t'\in [t_{i-1}, t_i) & : d_L^{(t')}\big(q, \partial \CB_G(q, \underline{K}_{t,t'})\big)\leq Q_{t,t'} - M_L\big\}\nonumber \\
                                     & \subseteq
  \big\{d_L^{(t_i)}\big(q, \partial \CB_G(q, \underline{K}_{t,t_{i}}+2)\big)\leq Q_{t,t_{i-1}} - M_L\big\},\label{eq:lwr-weighted-events-1}
 \end{align}
 where $K_{t,t'}=K_{t,t_i}+2$ for $t'\in[t_{i-1},t_i)$ follows from \eqref{eq:ti}.
 By construction of $t_i$ and $Q_{t,t'}$ in \eqref{eq:qt-kt}, where the summands are nonincreasing, there exists $M_1>0$
 such that for all $t$
 \[
  |Q_{t,t_i} - Q_{t,t_{i-1}}|\leq M_1.
 \]
 This yields that we can bound \eqref{eq:lwr-weighted-events-1} further to obtain
 \begin{align}
  \big\{\exists t'\in [t_{i-1}, t_i) & : d_L^{(t')}\big(q, \partial \CB_G(q, \underline{K}_{t,t'})\big)\leq Q_{t,t'} - M_L\big\}\nonumber \\
                                     & \subseteq
  \big\{d_L^{(t_i)}\big(q, \partial \CB_G(q, \underline{K}_{t,t_{i}}+2)\big)\leq Q_{t,t_{i}} - M_L + M_1\big\}.\nonumber
 \end{align}
 Hence, by a union bound over $i$, we can bound \eqref{eq:lower-weighted-inproof-1}, i.e.,
 \begin{align}
  \P_{\mathrm{g}}\Big(\exists t'\geq t & : d_L^{(t')}\big(q, \partial \CB_G(q, \underline{K}_{t,t'})\big)\leq Q_{t,t'} - M_L\Big) \nonumber \\
                                       & \leq
  \sum_{i=1}^{K_{t,t}/2}\P_{\mathrm{g}}\Big(d_L^{(t_i)}\big(q, \partial \CB_G(q, \underline{K}_{t,t_i}+2)\big)\leq Q_{t,t_{i}} - M_L +M_1\Big).\label{eq:lwr-bound-subs}
 \end{align}
 \noindent In Lemma \ref{lemma:lower-neighbourhoods} in the appendix we show that a generalization of \cite[Lemma 4.5]{jorkom2019weighted} gives for $B$ sufficiently large (depending on $\delta'$) and $m_{i,k}^\sss{(t)}(B):= \exp\big(2B(1\vee\log(t_i/t))(\tau-2)^{-k/2}\big)$ that
 \begin{align}
  \P_{\mathrm{g}}\Bigg(\bigcup_{k=1}^{\underline{K}_{t,t_i }+2}\Big\{\vert\partial\CB_G^{(t_i)}(q, k)\vert \geq m_{i,k}^\sss{(t)}(B)\Big\} \Bigg)
  \leq 2\exp\big(-B(1\vee\log(t_i/t))\big). \label{eq:lower-claim-neighbourhoods}
 \end{align}
 We denote the complement of the event inside the $\P$-sign by $\CE_{\mathrm{neigh}}^\sss{(i)}(q)$ for a fixed $i$.  Define the conditional probability measure $\P_{\mathrm{g},\mathrm{n}}^\sss{(i)}(\cdot):=\P\big(\,\cdot\mid\CE_{\text{good}}\cap \CE_{\mathrm{neigh}}^\sss{(i)}(u)\cap\CE_{\mathrm{neigh}}^\sss{(i)}(v)\big)$.
 The number of edges connecting a vertex at distance $k$ from $q$ to a vertex at distance $k+1$ from $q$ can then be bounded for all $k\leq \underline{K}_{t,t_i}+2$, i.e., $\P_{\mathrm{g},\mathrm{n}}^\sss{(i)}$-a.s.
 \begin{align}
  |\partial\CB_G^{(t_i)} & (q,k)|\cdot|\partial\CB_G^{(t_i)}(q,k+1)| \nonumber   \\
                         & \leq m_{i,k}^\sss{(t)}(B)\cdot m_{i,k+1}^\sss{(t)}(B)
  \leq \exp\big(4B(1\vee\log(t_i/t))(\tau-2)^{-(k+1)/2}\big) =: n_{i,k}. \label{eq:upper-nik-def}
 \end{align}
 Since all edges in the graph are equipped with i.i.d.\ copies of $L$, and as the minimum of $K$ i.i.d.\ random variables is nonincreasing in $K$, we have by Lemma \ref{lemma:methods-of-minima} for $\xi>0$ that for $k\leq \underline{K}_{t,t_i}+1$
 \begin{align*}
  \P_{\text{g,n}}^\sss{(i)} & \Big(
  d_L^{(t_i)}\big(\partial \CB_G^{(t_i)}(q, k),
  \partial \CB_G^{(t_i)}(q, k+1)\big)
  \leq F_L^{(-1)}\big(n_{i,k}^{-1-\xi}\big)
  \Big)\nonumber                    \\
                            & \leq
  \P_{\text{g,n}}^\sss{(i)}\Big(\min_{j\in[n_{i,k}]} L_{j,k} \leq F_L^{(-1)}\big(n_{i,k}^{-1-\xi}\big)\Big)
  \leq \exp\big(-4B\xi(1\vee\log(t_i/t))(\tau-2)^{-(k+1)/2}\big).
 \end{align*}
 \noindent
 Recall \eqref{eq:lower-bnd-to-bnd}.
 We apply the inequality in the event in the first row above for $k\leq \underline{K}_{t,t_i}+1$
 to obtain a bound on the lhs between brackets in the second row in \eqref{eq:lwr-bound-subs}, i.e., by a union bound
 \begin{align}
  \P_{\mathrm{g,n}}^\sss{(i)}\bigg(d_L^{(t_i)}\big(q, & \partial \CB_G(q, \underline{K}_{t,t_i}+2)\big)\leq \sum_{k=0}^{\underline{K}_{t,t_i}+1}F_L^{(-1)}\big(n_{i,k}^{-1-\xi}\big)\bigg) \nonumber \\
                                                      & \leq \sum_{k=0}^{\underline{K}_{t,t_i}+1}\exp\big(-4B\xi(1\vee\log(t_i/t))(\tau-2)^{-(k+1)/2}\big) \nonumber                                 \\
                                                      & \leq 2\exp\big(-4B\xi(1\vee\log(t_i/t))(\tau-2)^{-1}\big).\label{eq:lower-weighted-bound-fixed-i}
 \end{align}
 We now bound the sum in the above event from below to relate it to the rhs between brackets in \eqref{eq:lwr-bound-subs}. We do so by modifying \cite[Proof of Proposition 4.1, after (4.31)]{jorkom2019weighted}. Afterwards, we bound the total error probability by taking a union bound over the times $(t_i)_{i\ge 0}$.

To bound $F_L^{(-1)}\big(n_{i,k}^{-1-\xi}\big)$ from below, we need an upper bound on $n_{i,k}$ in \eqref{eq:upper-nik-def} since $z\mapsto F_L^{-(1)}(1/z)$ is nonincreasing. We first establish a lower and upper bound on $t_i$.
 Recall the integer $K_{t,t'}$ defined in \eqref{eq:kt}, so that we may write for $t'\leq t_{K_{t,t}/2}$
 \[K_{t,t'}= 2\big(\log\log(t)-\log\big(\log(t'/t)\vee 1\big)\big)/|\log(\tau-2)| - a_{t'}\] for  some $a_{t'}\in(0,1)$ being the fractional part of the expression. Using this notation one can verify that
 \be
 t_i\in\big[t\exp\big((\tau-2)^{-i+1}\big), t\exp\big((\tau-2)^{-i-1}\big)\big] =: [\ubar{t}_i, \bar{t}_i] \qquad \text{ for $i\leq K_{t,t}/2$}. \label{eq:lower-ti}
 \ee
 Substituting the upper bound on $t_i$ into $n_{k,i}$ in \eqref{eq:upper-nik-def}, yields that there exists $C=C(\xi, B)>0$ such that
 \be
 n_{i,k}^{1+\xi} \in \big[\exp\big((\tau-2)^{-i-k/2}/C\big), \exp\big(C(\tau-2)^{-i-k/2}\big)\big],\nonumber
 \ee
 which implies that, recalling $\underline{K}_{t,t_i}=K_{t,t_i}-M_G$ for some constant $M_G>0$ by \eqref{eq:lower-kt},
 \be
 \sum_{k=0}^{\underline{K}_{t,t_i}+1}F_L^{(-1)}\big(n_{i,k}^{-(1+\xi)}\big)
 \ge
 \sum_{k=0}^{\underline{K}_{t,t_i}-M_G + 1}F_L^{(-1)}\big(\exp\big(-C(\tau-2)^{-i-k/2}\big)\big).\nonumber
 \ee
 Observe that for a monotone nonincreasing function $g$, $g(1)<\infty$
 \be
 \sum_{k=\lceil a\rceil + 1}^{\lfloor b\rfloor}g(k)\overset{(\star)}\le \int_a^b g(x)\rd x \overset{(\ast)}\le \sum_{k=\lfloor a\rfloor}^{\lfloor b\rfloor}g(k).\label{eq:integral-sum-tricks}
 \ee
 Since $z\mapsto F_L^{-(1)}(1/z)$ is nonincreasing and bounded, we obtain by $(\ast)$ that
 \be
 \sum_{k=0}^{K_{t,t_i}-M_G+1}F_L^{(-1)}\big(n_{i,k}^{-(1+\xi)}\big)
 \ge
 \int_{x=0}^{K_{t,t_i}}F_L^{(-1)}\big(\exp\big(-B(\tau-2)^{-i-x/2}\big) \big)\rd x - M.\nonumber
 \ee
 Applying the change of variables
 $
  y = x + 2i + 2\log(B)/|\log(\tau-2)|
 $
 yields for $C=2\log(B)/|\log(\tau-2)|$ and some constant $M_L\ge M$ that
 \begin{align*}
  \sum_{k=0}^{K_{t,t_i}-M_G+1}F_L^{(-1)}\big(n_{i,k}^{-(1+\xi)}\big)
   & \ge
  \int_{y=2i  + C}^{2i + K_{t,t_i} + C}F_L^{(-1)}\big(\exp\big(-(\tau-2)^{-y/2}\big)\big) \rd y - M \nonumber \\
   & \ge
  \int_{y=2i}^{2i + K_{t,t_i}}F_L^{(-1)}\big(\exp\big(-(\tau-2)^{-y/2}\big)\big) \rd y - M_L,
 \end{align*}
 again using that  $z\mapsto F_L^{(-1)}(1/z)$ is bounded and nonincreasing. By transforming the integral back to a summation using $(\star)$ in \eqref{eq:integral-sum-tricks}, we obtain by definition of $Q_{t,t_i}$ in \eqref{eq:qt-kt}, and $K_{t,t}-K_{t,t_i}=2i$  in \eqref{eq:kti}
 \begin{align*}
  \sum_{k=0}^{K_{t,t_i}-M_G+1}F_L^{(-1)}\big(n_{i,k}^{-(1+\xi)}\big)
   & \ge
  \sum_{k=2i+1}^{2i + K_{t,t_i}}F_L^{(-1)}\big(\exp\big(-(\tau-2)^{-k/2}\big)\big) - M_L         \\
   & =
  \sum_{k=K_{t,t}-K_{t,t_i}+1}^{K_{t,t}}F_L^{(-1)}\big(\exp\big(-(\tau-2)^{-k/2}\big)\big) - M_L \\
   & = Q_{t,t_i} - M_L.
 \end{align*}
 Using this lower bound inside the event in \eqref{eq:lower-weighted-bound-fixed-i}, yields that
 \begin{align}
  \P_{\mathrm{g,n}}^\sss{(i)}\big(d_L^{(t_i)}\big(q, \partial \CB_G(q, \underline{K}_{t,t_i}+2)\big)\leq Q_{t,t_i}-M_L\big)
  \leq 2\exp\big(-4B\xi(1\vee\log(t_i/t))(\tau-2)^{-1}\big).\label{eq:lower-weighted-bound-fixed-i-2}
 \end{align}
 Recall that we would like to show \eqref{eq:lower-weighted-inproof-1}.
 Its proof is accomplished by a union bound over the times $(t_i)_{i\leq\underline{K}_{t,t'/2}}$ if we show that there is a $B$ sufficiently large such that the error probabilities on the rhs in \eqref{eq:lower-claim-neighbourhoods} and \eqref{eq:lower-weighted-bound-fixed-i-2} are smaller than $\delta'$  when summed over $i\leq \underline{K}_{t,t'}/2$.
 For this it is sufficient to show that for any $\hat{\delta}>0$ and $C'>0$ there exists $B>0$ such that
 \be
 \sum_{i=0}^{\infty}\exp\big(-C'B(1\vee\log(t_i/t))\big) \leq \hat{\delta}.\nonumber
 \ee
 This follows from the lower bound on $t_i$ in \eqref{eq:lower-ti}, since for $B$ large
 \begin{align}
  \sum_{i=0}^{\infty}\exp\big(-C'B(1\vee\log(t_i/t))\big)
   & \leq
  \sum_{i=0}^{\infty}\exp\big(-C'B(1\vee\log(\ubar{t}_i/t))\big) \nonumber \\
   & =
  \sum_{i=0}^{\infty}\exp\big(-C'B(1\vee(\tau-2)^{-i+1})\big)\nonumber     \\
   & \leq 2\exp\big(-C'B(\tau-2)\big) < \hat{\delta}.\nonumber
 \end{align}
\end{proof}
\section{Proof of the upper bound}\label{sec:upper-bound}
The upper bound of Theorem \ref{thm:weighted-evolution} is stated in the following proposition.
\begin{proposition}[Upper bound weighted distance]\label{proposition:upper-bound}
 Consider the preferential attachment model with power-law exponent $\tau\in(2,3)$. Equip every edge upon creation with an i.i.d.\ copy of the non-negative random variable $L$. Let $u, v$ be two typical vertices in $\mathrm{PA}_t$.
 If $\bm{I}_2(L)<\infty$, then for any $\delta>0$, there exists $M_L>0$ such that
 \begin{equation}
  \P\Big(\exists t': d_L^{(t')}(u,v) \geq 2Q_{t,t'} + M_L \Big)\leq \delta.\label{eq:upper-bound}
 \end{equation}
 Regardless of the value of $\bm{I}_2(L)$, for any $\delta,\vareps>0$, there exists $M_L>0$ such that
 \begin{equation}
  \P\Big(\exists t': d_L^{(t')}(u,v) \geq 2(1+\vareps)Q_{t,t'} + M_L\Big)\leq \delta.\label{eq:upper-bound-weak}
 \end{equation}
\end{proposition}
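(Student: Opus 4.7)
\medskip
\noindent\textbf{Proof proposal.} My plan is to implement the two-connector procedure sketched in the methodology section, executed along the subsequence $(t_i)_{i\geq 0}$ defined in \eqref{eq:lower-ti}, and then interpolate between successive $t_i$ using monotonicity of both $d_L^{(t')}(u,v)$ and $Q_{t,t'}$ in $t'$. The idea is: at each $t_i$ I will explicitly construct a $(u,v)$-path of total weight at most $2Q_{t,t_i}+M_L$; since edges are never removed, this path is still $t'$-present for every $t'\in[t_i,t_{i+1})$, so $d_L^{(t')}(u,v)\le d_L^{(t_i)}(u,v)$. On the other hand, $Q_{t,t'}$ is non-increasing in $t'$ but drops only at the discrete times $t_i$, and by \eqref{eq:qt-kt} the jump $Q_{t,t_{i-1}}-Q_{t,t_i}$ is bounded by the sum of the two new summands $F_L^{(-1)}\bigl(\exp(-(\tau-2)^{-k/2})\bigr)$ for the two values of $k$ that get added. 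When $\bm{I}_2(L)<\infty$, these jumps are uniformly bounded by an absolute constant, yielding \eqref{eq:upper-bound}; when $\bm{I}_2(L)=\infty$ the worst summand (at small $k$) may be large, and I absorb it into the $\vareps Q_{t,t'}$ slack in \eqref{eq:upper-bound-weak}.

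For the construction at a fixed $t_i$, I first handle the initial bridge. Using the snapshot estimate \cite[Theorem 2.8]{jorkom2019weighted} applied at time $t$, I argue that with probability at least $1-\delta/4$ each typical vertex $q\in\{u,v\}$ is joined by a $t$-present path of bounded graph length and bounded total weight $M_0$ to a vertex $q_{t,0}$ whose degree at time $t$ exceeds a constant $s_0$. This initial segment is frozen once and for all, and contributes an additive $2M_0$ to the weight. Next I control the degree of $q_{t,0}$ simultaneously for \emph{all} $t'\ge t$: by the standard Polya-urn/martingale representation of PA, $D_{q_{t,0}}(t')/(t'/t)^{1/(\tau-1)}$ converges a.s.\ and, by a submartingale maximal inequality (or by comparing with a negative-binomial counting process), stays bounded below by $c\cdot (t'/t)^{1/(\tau-1)}$ uniformly in $t'\ge t$ with high probability.

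The core argument is the greedy iteration. Starting from $q_{t,0}$, which at time $t_i$ has degree at least $s_0(t_i/t)^{1/(\tau-1)}=:s^{(i)}$, I claim that there exists a vertex of degree at least $(s^{(i)})^{1/(\tau-2)}$ that shares a common neighbour with $q_{t,0}$ in $\mathrm{PA}_{t_i}$, and moreover many such two-edge detours exist. Taking the minimum over the available two-edge connectors yields an edge weight bounded by the minimum of i.i.d.\ copies of $L$, whose number grows doubly-exponentially in the iteration index $k$. By Lemma \ref{lemma:methods-of-minima} (applied to $L$) the $k$-th minimum is at most $F_L^{(-1)}\bigl(\exp(-(\tau-2)^{-k/2})\bigr)$, which is exactly the $k$-th summand in the definition of $Q_{t,t_i}$ in \eqref{eq:qt-kt}. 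After $K_{t,t_i}/2$ iterations the constructed endpoint lies in the inner core (degree of polynomial order in $t_i$), where both graph and weighted diameter are $O(1)$; hence the two half-paths emanating from $u$ and $v$ can be glued at a cost absorbed into $M_L$. Summing the contributions gives the promised total weight $2Q_{t,t_i}+M_L$.

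The main obstacle, and the reason for working along $(t_i)_{i\ge 0}$ rather than at every $t'$, is making the error probabilities summable. At a single $t_i$ the failure probability of the construction (degree concentration of $q_{t,0}$, existence of the next connector, tail of the minimum) can be bounded by $\exp(-cB(1\vee\log(t_i/t)))$ for large constants $B$, exactly as in \eqref{eq:lower-summable-ti}; combined with the geometric spacing of the $t_i$'s from \eqref{eq:lower-ti}, this sums to $\delta$. The delicate issue is matching the random construction at $t_i$ to every intermediate $t'\in[t_i,t_{i+1})$: this is where I crucially use that the frozen initial segment and the frozen greedy path at $t_i$ are both still $t'$-present, so the distance cannot increase, while the target bound $2Q_{t,t'}+M_L$ can only get larger than $2Q_{t,t_i}+M_L$ in the conservative-weight regime. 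The $(1+\vareps)$-slack in \eqref{eq:upper-bound-weak} is needed precisely to dominate the first few (potentially enormous) summands of $Q_{t,t'}$ when $\bm{I}_2(L)=\infty$, whereas under $\bm{I}_2(L)<\infty$ all intermediate jumps are uniformly bounded and \eqref{eq:upper-bound} follows.
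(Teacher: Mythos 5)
Your proposal follows the same architecture as the paper's proof (initial bridge to a constant-degree anchor $q_{t,0}$, martingale/maximal-inequality control of its degree over all $t'\geq t$, greedy two-connector climb to the inner core, $O(1)$ core diameter, interpolation via the subsequence $(t_i)$), so the route is correct in outline. However, there is a genuine gap in how you place and use the hypothesis $\bm{I}_2(L)<\infty$ versus the $(1+\vareps)$-slack, and this is exactly the place where the quantitative heart of the argument lives.

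You claim that the jumps $Q_{t,t_{i-1}}-Q_{t,t_i}$ are where $\bm{I}_2(L)<\infty$ enters: ``these jumps are uniformly bounded by an absolute constant'' if $\bm{I}_2(L)<\infty$ and otherwise large. This is not right. Since the summands $F_L^{(-1)}\big(\exp(-(\tau-2)^{-k/2})\big)$ in \eqref{eq:qt-kt} are non-increasing in $k$ and the first one is a finite constant (as $F_L$ is a proper distribution function), the jump is at most $2F_L^{(-1)}\big(\exp(-(\tau-2)^{-1/2})\big)$, a finite constant \emph{regardless} of $\bm{I}_2(L)$; this is precisely what the paper uses (cf.\ the bound $|Q_{t,t_i}-Q_{t,t_{i-1}}|\leq M_1$ in the proof of Proposition \ref{proposition:lower-bound-weighted}). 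The real place where $\bm{I}_2(L)<\infty$ is needed is in comparing the weight of the greedy path to $Q_{t,t_i}$. At each greedy step $k$ you add \emph{two} edges, so the cost is $\min_{j\le n_k^{(i)}}(L_{1,j}+L_{2,j})$, which Lemma \ref{lemma:methods-of-minima} bounds by $F_{L_1+L_2}^{(-1)}\big((n_k^{(i)})^{-1+\xi}\big)$ — \emph{not} by $F_L^{(-1)}\big(\exp(-(\tau-2)^{-k/2})\big)$ as you assert. The numbers $n_k^{(i)}$ are only of the rough order $\exp(c\,(\tau-2)^{-k/2})$ up to polynomial corrections in $k$ and a power $(1-\vareps_k)$, and $F_{L_1+L_2}^{(-1)}$ differs from $F_L^{(-1)}$. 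Matching $\sum_k F_{L_1+L_2}^{(-1)}\big((n_k^{(i)})^{-1+\xi}\big)$ to $Q_{t,t_i}+M$ term by term is exactly the step requiring $\bm{I}_2(L)<\infty$ (see \eqref{eq:upper-i2fin-last} in the paper, which imports the comparison from \cite[Proof of Proposition 3.4]{jorkom2019weighted}); without that hypothesis you only get the $(1+\vareps)$-version \eqref{eq:upper-i2inf-last}. As written, your greedy step silently identifies the realized cost with the summands of $Q$, so the proof would go through unconditionally and give \eqref{eq:upper-bound} for all $L$ — which cannot be right, as the paper's dichotomy shows. You need to make the $F_{L_1+L_2}$ vs.\ $F_L$ and $n_k^{(i)}$ vs.\ $\exp((\tau-2)^{-k/2})$ comparisons explicit and invoke $\bm{I}_2(L)$ there, not at the jumps of $Q$.
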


\subsubsection*{Outline of the proof}
To prove Proposition \ref{proposition:upper-bound}, we have to show that for every $t'>t$ there is a $t'$-present $(u,v)$-path whose total weight is bounded from above by the rhs between brackets in \eqref{eq:upper-bound} and \eqref{eq:upper-bound-weak}, respectively.
We construct a $t'$-present five-segment path $\bm{\pi}^\sss{(t')}$ of three segment types. We write it as $\bm{\pi}^\sss{(t')}=\ora{\bm{\pi}}_{u,0}^\sss{(t)}\circ\ora{\bm{\pi}}_{u,1}^\sss{(t')}\circ\bm{\pi}_{\text{core}}^\sss{(t')}\circ\ola{\bm{\pi}}_{v,1}^\sss{(t')}\circ\ola{\bm{\pi}}_{v,0}^\sss{(t)}$.
Here, we denote for a path segment $\ora{\bm{\pi}}=(\pi_0,\dots,\pi_n)$ its reverse by $\ola{\bm{\pi}}:=(\pi_n,\dots,\pi_0)$.
The path segments are constructed similar to the methods demonstrated in \cite[Section 3]{jorkom2019weighted}. However, we need stronger error bounds compared to \cite{jorkom2019weighted}, so that the error terms are also small when \emph{summed} over $t'\geq t$. Let $\delta'>0$ be sufficiently small, and $(M_i)_{i\leq 3}$ be suitable positive constants.
\begin{enumerate}
 \item[Step 1.] For $q\in\{u,v\}$, the path segment $\ora{\bm{\pi}}_{q,0}^\sss{(t)}:=(q,\dots,q_0)$ connects $q$ to a vertex $q_0$ that has indegree at least $s_{0}^\sss{(0)}>0$ at time $(1-\delta')t$. The path segment uses only vertices that are older than $(1-\delta')t$. The path segments are fixed for all $t'\geq t$. The number of edges on $\ora{\bm{\pi}}_{q,0}^\sss{(t)}$  is bounded from above by $M_1=M_1(\delta', s_0^\sss{(0)})$ w/p close to one.
       \begin{enumerate}
        \item Since the number of edges on $\ora{\bm{\pi}}_{q,0}^\sss{(t)}$ is bounded, its total weight can also be bounded by a constant.
              This is captured by the constant $M_L$ in the statement of Proposition \ref{proposition:upper-bound}.
        \item For the end vertex $q_0$ of $\ora{\bm{\pi}}^\sss{(t)}_{q,1}$, for $q\in\{u,v\}$,  we identify the rate of growth of its indegree $(D^{\leftarrow}_{q_0}(t'))_{t'\geq t}$.  We bound $D^{\leftarrow}_{q_0}(t')$ from below during the \emph{entire interval} $[t,\infty)$ by a sequence that tends to infinity in $t'$ sufficiently fast.
       \end{enumerate}
 \item[Step 2.] For the path segments $\ora{\bm{\pi}}_{u,1}^\sss{(t')}, \bm{\pi}_{\text{core}}^\sss{(t')}$, and $\ola{\bm{\pi}}_{v,1}^\sss{(t')}$ we argue, similar to the proof of Proposition \ref{proposition:lower-bound-weighted}, that it is sufficient to construct these path segments along a specific subsequence $(t_i)_{i\geq0}$, as the $t_i$-present path segments have small enough total weight when compared to $Q_{t,t'}$ for all $t'\in(t_i,t_{i+1}]$. With a slight abuse of notation we abbreviate for the path (segments) $\bm{\pi}^\sss{(i)}:=\bm{\pi}^\sss{(t_i)}$.
 \item[Step 3.] For $q\in\{u,v\}$, the path segment $\ora{\bm{\pi}}_{q,1}^\sss{(i)}$ consists of at most $K_{t,t_i} + M_2$ edges and connects the vertex $q_0$ to the so-called $i$-th inner core, i.e., the set of vertices with a sufficiently large degree at time $(1-\delta')t_i$.
       These path segments use only edges that arrived \emph{after} time $(1-\delta')t_i$ and the total weight of any such path  segment is therefore independent of the total weight on the segments $\ora{\bm{\pi}}_{u,0}^\sss{(t)}$ and $\ora{\bm{\pi}}_{v,0}^\sss{(t)}$,  that use only edges that arrived \emph{before} $(1-\delta')t$.
       For the weighted distance, we construct the path segment $\ora{\bm{\pi}}_{q,1}^\sss{(i)}$ greedily (minimizing the edge weights) to bound the weighted distance between $q_0$ and the inner core from above by $Q_{t,t_i}+M_3$.
 \item[Step 4.]
       Denote the end vertices of $\ora{\bm{\pi}}_{u,1}^\sss{(i)}$ and $\ora{\bm{\pi}}^\sss{(i)}_{v,1}$ by $w_{u}^\sss{(i)}$ and $w_{v}^\sss{(i)}$, respectively.
       The middle path segment $\bm{\pi}_{\text{core}}^\sss{(i)}$ connects the two vertices $w_u^\sss{(i)},w_v^\sss{(i)}$ in the inner core. The number of disjoint paths of bounded length between from $w_u^\sss{(i)}$ to $w_v^\sss{(i)}$ is growing polynomially in $t'$. This yields that $d_L^{(t_i)}(w_u^\sss{(i)},w_v^\sss{(i)})$ is bounded by a constant for all $i$. This weight is captured by $M_L$ in Proposition \ref{proposition:upper-bound}.
 \item[Step 5.] Eventually we \emph{glue} the different path segments together and obtain the results \eqref{eq:upper-bound} and \eqref{eq:upper-bound-weak}. The path segments $\ora{\bm{\pi}}_{u,1}^\sss{(t')}, \bm{\pi}_{\text{core}}^\sss{(t')}$, and $\ola{\bm{\pi}}_{v,1}^\sss{(t')}$ change at the times $(t_i)_{i\geq0}$, while the segments $\ora{\bm{\pi}}_{u,0}^\sss{(t)}$ and $\ola{\bm{\pi}}_{v,0}^\sss{(t)}$ stay the same for all $t'\geq t$.
\end{enumerate}
\begin{figure}
 \includegraphics[width=0.9\textwidth]{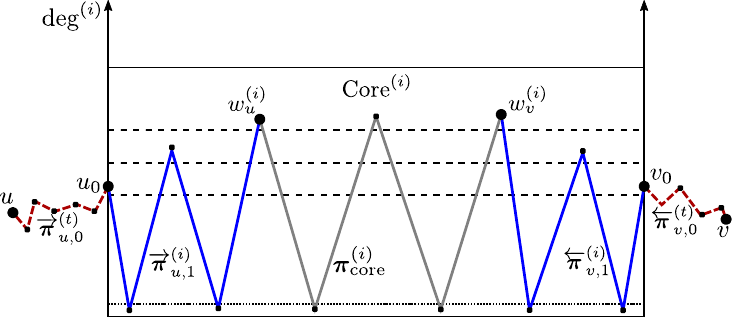}
 \caption{The five-segment path $\bm{\pi}^\sss{(i)}$ from $u$ to $v$ at a single time $t_i$. The $y$-axis represents the degree of the vertices at time $t_i$. The connected dots form the constructed path $\bm{\pi}^\sss{(i)}$ from $u$ to $v$ that is present at time $t_i$.
  The top continuous black line is the maximal degree in the graph at time $t_i$, while the dashed horizontal lines represent a degree-threshold sequence $(s_k^\sss{(i)})_{i\geq0,k\geq0}$ defined in the proof of Proposition \ref{proposition:weighted-to-inner} for the segments $\ora{\bm{\pi}}_{u,1}^{(i)}$ and $\ola{\bm{\pi}}_{v,1}^{(i)}$.
 }
 \label{fig:upper-outline}
\end{figure}
See Figure \ref{fig:upper-outline} for a sketch of the constructed path and Figure \ref{fig:upper-bound} in the introduction for a visualization of the construction of the subsequence $(t_i)_{i\geq 0}$ and the control of the degree of the vertex $q_0$. Recall that the proof of the lower bound was based on controlling the dynamically changing \emph{graph neighbourhood} up to distance $\underline{K}_{t,t'}= K_{t,t'}-M_G$.
For the proof of the upper bound, the dynamics of the graph are mostly captured by controlling \emph{the degree} of only two vertices, see Step 1b in the outline.
\subsection*{Step 1. Initial segments and degree evolution}
Recall $D_v^{\leftarrow}(t')$, the indegree of vertex $v$ at time $t'$.
\begin{lemma}[Finding a high-degree vertex]\label{lemma:upper-suff-degree}
 For any $s_{0}^\sss{(0)},\delta'>0$ there exists $M>0$ such that for a typical vertex $q$ in $\mathrm{PA}_t$
 \begin{equation}
  \P\big(\nexists q_0\in[(1-\delta')t]: d_L^{((1-\delta')t)}(q, q_0)\leq M \text{ and } D_{q_0}^{\leftarrow}\big((1-\delta')t\big)\geq s_{0}^\sss{(0)} \big)\leq 3\delta'.\label{eq:upper-suff-degree}
 \end{equation}
 \begin{proof}
  Let $\CE_{\text{old}}:=\{q<(1-\delta')t\}$. Since $q$ is chosen uniformly among the first $t$ vertices,
  \[
   \P(\CE_{\text{old}}) = 1-\delta' + O(1/t).
  \]
  Recall $\CB_G^{(t)}(x,R)$ from \eqref{eq:graph-neighbourhood}.
  From minor adaptations of the proofs of \cite[Theorem 3.6]{dommers2010diameters} for FPA, and \cite[Proposition 5.10]{monch2013distances} for VPA
  it follows for all $\delta'>0$ that there exists $M_1$ such that
  \[
   \P\big(\CB_G^{((1-\delta')t)}(q, M_1)\cap \{x: D_{x}^{\leftarrow}\big((1-\delta')t\big)\geq s_0) \neq \varnothing \mid \CE_{\text{old}}\big) \geq 1-\delta'.
  \]
  We refer the reader there for the details.
  Conditionally on the above event between brackets there is a $(1-\delta')t$-present path segment from $q$ to $q_0$, whose edges carry i.i.d.\ weights. Hence, there exists $M_2>0$ such that the weight on the segment can be bounded, yielding \eqref{eq:upper-suff-degree}.
 \end{proof}
\end{lemma}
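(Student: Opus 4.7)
The plan is a three-part union bound on the failure event. First, let $\CE_{\text{old}}:=\{q<(1-\delta')t\}$ be the event that the uniformly chosen vertex $q$ is old enough to be present in $\mathrm{PA}_{(1-\delta')t}$. Since $q$ is uniform on $[t]$, a direct calculation gives $\P(\neg\CE_{\text{old}})=\delta'+O(1/t)$. This handles the first $\delta'$ in the bound, and conditioning on $\CE_{\text{old}}$ allows us to work inside the graph $\mathrm{PA}_{(1-\delta')t}$, where $q$ is already present.

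Second, I would import existing static results on graph distances from a typical vertex to high-degree vertices in PA with $\tau\in(2,3)$. More concretely, I expect that (minor adaptations of) \cite[Theorem 3.6]{dommers2010diameters} for FPA and \cite[Proposition 5.10]{monch2013distances} for VPA yield, for any fixed threshold $s_0^{(0)}$ and any $\delta'>0$, a constant $M_1=M_1(\delta',s_0^{(0)})$ such that
\[
\P\Big(\CB_G^{((1-\delta')t)}(q,M_1)\cap\big\{x:D_x^{\leftarrow}((1-\delta')t)\geq s_0^{(0)}\big\}\neq\varnothing \,\Big|\, \CE_{\text{old}}\Big)\geq 1-\delta'.
\]
The underlying mechanism is a branching-process style exploration of the neighbourhood of $q$ for a bounded number of generations until a vertex exceeding the threshold is hit; since $s_0^{(0)}$ is constant in $t$, only $O(1)$ hops are needed. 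This contributes the second $\delta'$ to the error.

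Third, on the intersection of the two good events there is a witnessing $q$-to-$q_0$ path in $\mathrm{PA}_{(1-\delta')t}$ with at most $M_1$ edges, each carrying an i.i.d.\ copy of $L$ assigned at the moment the edge was created (hence independent of the graph topology). The total weight of the path is therefore stochastically dominated by a sum of $M_1$ i.i.d.\ copies of $L$, which is a tight random variable, so one can choose $M=M(\delta',M_1)$ large enough that this sum exceeds $M$ with probability at most $\delta'$. A union bound over the three failure events then produces the claimed $3\delta'$. The only substantive obstacle is the second step: the cited results are formulated for graph distances at a single snapshot and for slightly different variants of PA, so one must verify that they furnish the conditional, uniform-in-$t$ statement used above; this is essentially bookkeeping rather than new argument, since both references already perform the relevant neighbourhood-growth analysis up to a constant degree threshold.
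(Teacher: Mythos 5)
Your proposal matches the paper's proof essentially verbatim: same event $\CE_{\text{old}}$ with the same probability computation, same appeal to \cite[Theorem 3.6]{dommers2010diameters} and \cite[Proposition 5.10]{monch2013distances} for the bounded-hop reach to a vertex of degree at least $s_0^{(0)}$, and the same final step bounding the path weight by a sum of $M_1$ i.i.d.\ copies of $L$ and unioning the three $\delta'$-errors. The only differences are minor elaborations (noting the branching-process mechanism and the tightness of a finite i.i.d.\ sum), which are in the spirit of the paper's terse citation.
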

The following lemma bounds the degree evolution from below. It uses martingale arguments that are inspired by \cite{mori2002}. However, here the statement only refers to the process of a single vertex that has initial degree at least $s$ where \cite{mori2002} considers a set of vertices for any initial degree in FPA. Our statement applies to both FPA \emph{and} VPA. Moreover, we consider the degree during the entire interval $[t,\infty)$.
See also \cite[Section 5.1]{berger2014asymptotic} for results on the degree of an early vertex in $\mathrm{FPA}(m,\delta)$, where the considered vertex is born at time $o(t)$.
\begin{lemma}[Indegree lower bound]\label{lemma:degree-lowerbound}
 Consider the preferential attachment model with power-law parameter $\tau>2$. Let $q_0$ be a vertex such that $D^{\leftarrow}_{q_0}(t)\geq s\geq2$.
 There exists a constant $c>0$, not depending on $s$, such that for all $\delta'>0$
 \be
 \P\left(\exists t'\geq t:  D_{q_0}^{\leftarrow}(t')\leq \delta' s(t'/t)^{1/(\tau-1)}\right)\leq c\delta'.\label{eq:degree-lowerbound}
 \ee
 \begin{proof}
  Let $\gamma:=1/(\tau-1)$. Both for FPA and VPA, it holds by Definitions \ref{def:fpa} and \ref{def:vpa}  that
  \[
   \P\left(D_{q_0}^{\leftarrow}(t'+1) \geq D_v^{\leftarrow}(t') + 1 \mid \mathrm{PA}_{t'}\right)
   \geq \gamma D_{q_0}^{\leftarrow}(t')/t'
  \]
  for any $t'\geq t$.

  Let $(X_{t,t'})_{t'\geq t}$ be a discrete-time pure birth process satisfying $X_{t,t}=s$ and
  \be
  \P\big(X_{t,t'+1} = X_{t,t'} + 1 \mid X_{t,t'} = x\big) = 1 - \P\big(X_{t,t'+1} = X_{t,t'}\mid X_{t,t'} = x\big) = \gamma x/t'.\label{eq:birth-def}
  \ee
  Then the degree evolution $(D^\leftarrow_{q_0}(t'))_{t'\geq t}$ and $(X_{t,t'})_{t'\geq t}$ can be coupled such that the degree evolution dominates the birth process in the entire interval $[t,\infty)$. We first show that for any $k>-s$ and $\gamma\in(0,1)$, provided that $t'\geq t\geq k\gamma$,
  \begin{equation}
   Z^{(k)}_{t,t'} := \frac{\Gamma(t')}{\Gamma(t'+k\gamma)}\frac{\Gamma(t+k\gamma)}{\Gamma(t)} \frac{\Gamma(X_{t,t'}+k)}{\Gamma(X_{t,t'})} \label{eq:martingale-zk}
  \end{equation}
  is a non-negative martingale. The result will then follow by an application of the maximal inequality for $k=-1$. Clearly $\E[|Z^{(k)}_{t,t'}|]<\infty$, as the arguments in the Gamma functions in \eqref{eq:martingale-zk} are bounded away from 0.
  Moreover, since
  \be
  \Gamma(x) = (x-1)\Gamma(x-1),\label{eq:gamma-rec}
  \ee
  and using \eqref{eq:birth-def},
  \begin{align}
   \E\left[\frac{\Gamma\big(X_{t,t'+1}+k\big)}{\Gamma(X_{t,t'+1})} \mid X_{t,t'}\right]
    & =
   \frac{t'-\gamma X_{t,t'}}{t'}\frac{\Gamma(X_{t,t'}+k)}{\Gamma(X_{t,t'})} + \frac{\gamma X_{t,t'}}{t'}\frac{\Gamma(X_{t,t'}+k+1)}{\Gamma(X_{t,t'}+1)} \nonumber
   \\
    & = \frac{\Gamma(X_{t,t'}+k)}{\Gamma(X_{t,t'})} \Big(1 + \frac{k\gamma }{t'}\Big),\nonumber
  \end{align}
  making it straightforward to verify that the martingale property for $Z_{t,t'}^{(k)}$ holds for $t'\geq t$.
  Due to Kolmogorov's maximal inequality, for any $T>t, \lambda>0$
  \begin{align}
   \P\bigg(\sup_{t\leq t'\leq T} Z_{t,t'}^{(k)} \geq \lambda\bigg) & \leq \frac{\E[Z_{t,T}^{(k)}]}{\lambda} = \frac{Z_{t,t}^{(k)}}{\lambda} = \frac{\Gamma(X_{t,t} + k)}{\lambda\Gamma(X_{t,t})} = \frac{\Gamma(s+k)}{\lambda\Gamma(s)}.\label{eq:kolm-z}
  \end{align}
  Substituting \eqref{eq:martingale-zk} and $k=-1$, we see, using \eqref{eq:gamma-rec},
  \begin{align*}
   \bigg\{\sup_{t'\geq t} Z_{t,t'}^{(-1)} \geq \lambda\bigg\}
    & =
   \bigg\{\exists t'\geq t: \frac{\Gamma(t')}{\Gamma(t'-\gamma)}\frac{\Gamma(t-\gamma)}{\Gamma(t)} \frac{\Gamma\big(X_{t,t'}-1\big)}{\Gamma\big(X_{t,t'}\big)} \geq \lambda\bigg\} \\
    & =\bigg\{\exists t'\geq t:X_{t,t'}-1\leq \frac{\Gamma(t')}{\Gamma(t'-\gamma)}\frac{\Gamma(t-\gamma)}{\Gamma(t)} \frac{1}{\lambda}\bigg\}                                      \\
    & \supseteq
   \bigg\{\exists t'\geq t:X_{t,t'}\leq \frac{\Gamma(t')}{\Gamma(t'-\gamma)}\frac{\Gamma(t-\gamma)}{\Gamma(t)} \frac{1}{\lambda}\bigg\}.
  \end{align*}
  Since $\Gamma(x)/\Gamma(x+a) = x^{-a}(1+O(1/x))$, by \eqref{eq:kolm-z} there exists $c'$ such that for $t$ sufficiently large
  \[
   \frac{\Gamma(t')}{\Gamma(t'-\gamma)}\frac{\Gamma(t-\gamma)}{\Gamma(t)}
   \leq
   c'(t'/t)^{\gamma}.
  \]
  Choosing $\lambda=c'/(s\delta')$ yields
  \begin{equation*}
   \P\Big(\exists t'\geq t:  X_{t,t'}\leq \frac{1}{\lambda}c'(t'/t)^{\gamma}\Big)
   =
   \P\left(\exists t'\geq t:   X_{t,t'}\leq \delta' s(t'/t)^{\gamma}\right)
   \leq \frac{\delta' s}{c'(s-1)}.
  \end{equation*}
  Now \eqref{eq:degree-lowerbound} follows, since $s\geq 2$ and $\big(D_{q_0}(t')\big)_{t'\geq t} \overset{d}{\geq} \big(X_{t,t'})\big)_{t'\geq t}$.
 \end{proof}
\end{lemma}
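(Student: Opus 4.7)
The plan is to reduce the indegree evolution to a pure-birth Markov chain and then apply a martingale maximal inequality. From the one-step probabilities \eqref{eq:intro-p-conn} together with Definitions \ref{def:fpa} and \ref{def:vpa}, both FPA and VPA satisfy
\[
\P\bigl(D^{\leftarrow}_{q_0}(t'+1) \geq D^{\leftarrow}_{q_0}(t') + 1 \,\big|\, \mathrm{PA}_{t'}\bigr) \geq \gamma D^{\leftarrow}_{q_0}(t')/t',
\]
where $\gamma := 1/(\tau-1)$, for all $t' \geq t$. Consequently one can couple $(D^{\leftarrow}_{q_0}(t'))_{t'\ge t}$ so that it dominates a discrete-time pure-birth chain $(X_{t,t'})_{t' \ge t}$ with $X_{t,t}=s$ and transition law $\P(X_{t,t'+1}=X_{t,t'}+1\mid X_{t,t'}=x) = \gamma x/t'$. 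It therefore suffices to prove the analogous lower bound for $X$ and invoke stochastic domination.

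The core step is to exhibit a one-parameter family of non-negative martingales for $(X_{t,t'})$. For any $k > -s$ I would take
\[
Z^{(k)}_{t,t'} := \frac{\Gamma(t')}{\Gamma(t'+k\gamma)}\cdot \frac{\Gamma(t+k\gamma)}{\Gamma(t)}\cdot \frac{\Gamma(X_{t,t'}+k)}{\Gamma(X_{t,t'})},
\]
and verify the martingale identity by a single-step computation using $\Gamma(x+1)=x\,\Gamma(x)$: the conditional expectation of the rightmost factor produces an extra $(1+k\gamma/t')$, which is precisely cancelled by the increment of the $t'$-dependent Gamma ratio from $t'$ to $t'+1$. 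Specialising to $k=-1$, the hypothesis $s\ge 2$ keeps $X_{t,t'}-1$ in $[1,\infty)$ for all $t'\ge t$, so $Z^{(-1)}_{t,t'}$ is non-negative.

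Kolmogorov's maximal inequality applied on $[t,T]$, followed by monotone passage $T\to\infty$ (valid since the running supremum is non-decreasing and $Z^{(-1)}\ge 0$), yields
\[
\P\Bigl(\sup_{t'\ge t} Z^{(-1)}_{t,t'} \ge \lambda\Bigr) \le \frac{Z^{(-1)}_{t,t}}{\lambda} = \frac{1}{\lambda(s-1)}.
\]
Using the uniform asymptotic $\Gamma(y)/\Gamma(y+a) = y^{-a}(1+O(1/y))$ to rewrite the prefactors, the event above contains $\{\exists t'\ge t: X_{t,t'}-1 \le c'(t'/t)^{\gamma}/\lambda\}$ and hence $\{\exists t'\ge t: X_{t,t'} \le c'(t'/t)^{\gamma}/\lambda\}$ for some $c' = c'(\tau)>0$. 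Choosing $\lambda := c'/(\delta' s)$ turns the threshold into $\delta' s\,(t'/t)^{\gamma}$, and the probability bound becomes $s\delta'/(c'(s-1)) \le 2\delta'/c'$ because $s\ge 2$. Combined with the coupling, this gives \eqref{eq:degree-lowerbound} with $c := 2/c'$, a constant independent of $s$.

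The main obstacle I foresee is choosing the right one-parameter martingale: negative-moment martingales of the form $\Gamma(X+k)/\Gamma(X)$ with $k<0$ are the classical tool for P\'olya-urn-type processes, but one has to balance positivity (forcing $k>-s$ and exploiting $s\ge 2$ for $k=-1$) against the strength of the tail bound, and be careful that Kolmogorov's inequality is a finite-horizon statement before taking the supremum over all of $[t,\infty)$. A secondary issue is handling the $O(1/t)$ correction in \eqref{eq:intro-p-conn}, as well as the sequential-edge mechanism of FPA, cleanly enough that the stochastic domination by $X$ holds uniformly over $t'\ge t$ rather than only asymptotically.
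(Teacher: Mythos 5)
Your proposal follows essentially the same route as the paper's proof: the same coupling with a pure-birth chain, the same one-parameter family of Gamma-ratio martingales $Z^{(k)}_{t,t'}$, specialization to $k=-1$ (using $s\ge2$ for positivity), Kolmogorov/Doob's maximal inequality on finite horizons followed by $T\to\infty$, the Gamma-ratio asymptotic to produce $c'(t'/t)^{\gamma}$, and the choice $\lambda=c'/(\delta's)$. Your final bound $2\delta'/c'$ via $s/(s-1)\le2$ matches the paper's $\delta's/(c'(s-1))$ with $s\ge2$; the concerns you flag at the end (the $O(1/t)$ term and FPA's sequential edges) are likewise glossed over in the paper by an appeal to the definitions, so no substantive gap.
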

\begin{remarky}
 The proof of this lemma can be adapted to obtain an upper bound on the degree evolution, by applying Kolmogorov's maximal inequality to the martingale $Z^{(1)}_{t,t'}$.
\end{remarky}
\subsection*{Step 2. Sufficient to construct the path along a subsequence of times}
\begin{lemma}[Subsequence of times]\label{lemma:upper-sub}
 Consider the preferential attachment model under the same conditions as Proposition \ref{proposition:upper-bound} and let $(t_i)_{i\geq 0}$ be as defined in \eqref{eq:ti}.
 If there exists $M_L>0$ such that
 \be
 \P\big(\exists i\in[K_{t,t}/2]: d_L^{(t_i)}(u,v) \geq 2Q_{t,t_i} + M_L\big) \leq \delta, \label{eq:upper-sub}
 \ee
 then \eqref{eq:upper-bound} holds. Similarly, \eqref{eq:upper-bound-weak} holds if there exists $M_L>0$ such that for any $\vareps>0$ and $t$ sufficiently large
 \be
 \P\big(\exists i\in[K_{t,t}/2]: d_L^{(t_i)}(u,v) \geq 2(1+\vareps)Q_{t,t_i} + M_L\big) \leq \delta. \label{eq:upper-sub-weak}
 \ee
 \begin{proof}
  Recall \eqref{eq:qt-kt}. The rhs between brackets in \eqref{eq:upper-bound} only decreases at the times $(t_i)_{i\geq 0}$, while the lhs is nonincreasing in $t'$. By \eqref{eq:qt-kt}, for $i>K_{t,t}/2$, $Q_{t,t_i}=Q_{t,t_{K_{t,t}/2}}$ and the asserted statements follow.
 \end{proof}
\end{lemma}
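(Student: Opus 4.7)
The plan is to exploit two simple monotonicity properties to reduce control of the continuous-time event to control on the discrete time grid $(t_i)_{i\geq 0}$ defined in \eqref{eq:ti}. First, the function $t'\mapsto K_{t,t'}$ from \eqref{eq:qt-kt} is integer-valued and non-increasing, and by the very definition of $t_i$ in \eqref{eq:ti} it is piecewise constant, taking the value $K_{t,t_i}$ on the entire interval $[t_i,t_{i+1})$. Consequently $Q_{t,t'}=Q_{t,t_i}$ for every $t'\in[t_i,t_{i+1})$ as well. Second, $t'\mapsto d_L^{(t')}(u,v)$ is non-increasing, since $\mathrm{PA}_{t'}\subseteq\mathrm{PA}_{t'+1}$ and so every $t'$-present $(u,v)$-path is $(t'+1)$-present, which can only lower the minimum weight over $\Omega_{t'}(u,v)$.

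Combining these, for any $t'\in[t_i,t_{i+1})$ one has $d_L^{(t_i)}(u,v)\geq d_L^{(t')}(u,v)$ and $Q_{t,t'}=Q_{t,t_i}$, so
\begin{equation*}
\{d_L^{(t')}(u,v)\geq 2Q_{t,t'}+M_L\}\subseteq \{d_L^{(t_i)}(u,v)\geq 2Q_{t,t_i}+M_L\}.
\end{equation*}
Taking the union over $t'\geq t$ and then over $i\in[K_{t,t}/2]$ (noting that $K_{t,t'}$ stops changing once it reaches its floor value $2$, so only finitely many $t_i$ matter), this yields
\begin{equation*}
\{\exists t'\geq t: d_L^{(t')}(u,v)\geq 2Q_{t,t'}+M_L\}\subseteq \{\exists i\in[K_{t,t}/2]: d_L^{(t_i)}(u,v)\geq 2Q_{t,t_i}+M_L\}.
\end{equation*}
Applying the hypothesis \eqref{eq:upper-sub} then gives \eqref{eq:upper-bound}, and the identical argument with $2Q_{t,t_i}$ replaced by $2(1+\vareps)Q_{t,t_i}$ deduces \eqref{eq:upper-bound-weak} from \eqref{eq:upper-sub-weak}. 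There is no real obstacle here: the entire content of the lemma is the observation that the jumps of the right-hand side occur only along $(t_i)_{i\geq 0}$, so the worst instance of the event over $t'\in[t_i,t_{i+1})$ is realised at $t'=t_i$ by monotonicity of $d_L^{(t')}$.
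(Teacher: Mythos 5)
Your proof is correct and follows essentially the same route as the paper's (terse) argument: both rest on the observations that $t'\mapsto Q_{t,t'}$ is a step function constant on $[t_i,t_{i+1})$ by the definition of $t_i$ in \eqref{eq:ti}, that $t'\mapsto d_L^{(t')}(u,v)$ is non-increasing so the event on each such interval is realised at its left endpoint, and that $Q_{t,t_i}$ stops changing once $K_{t,t'}$ hits its floor value, so only $i\in[K_{t,t}/2]$ need be considered. You simply spell out the set inclusion that the paper leaves implicit.
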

\subsection*{Step 3. Greedy path to the inner core}
\noindent Define the $i$-th inner core, for $t_i$ from \eqref{eq:ti}, as
\begin{equation}
 \mathrm{Core}^\sss{(i)} := \{x\in [\hat{t}_i]: D_x\big(\hat{t}_i\big) \geq \hat{t}_i^{\frac{1}{2(\tau-1)}}\log^{-\frac{1}{2}}(\hat{t}_i)\}, \qquad \text{for }\hat{t}_i:=(1-\delta')t_i. \label{eq:core}
\end{equation}
\begin{proposition}[Weighted distance to the inner core]\label{proposition:weighted-to-inner}
 Consider the preferential attachment model under the same conditions as Proposition \ref{proposition:upper-bound}. There exists $C>0$ such that for every $\delta'>0$,  there exist $s_{0}^\sss{(0)},M>0$ such that for a vertex $q_0$ satisfying $D_{q_0}\big((1-\delta')t\big)\geq s_{0}^\sss{(0)}$, when $\bm{I}_2(L)<\infty$,
 \begin{equation}
  \P\Bigg(\bigcup_{i\leq K_{t,t}/2} \big\{d_L^{(t_i)}(q_0, \mathrm{Core}^\sss{(i)}) \geq Q_{t,t_i} + M\big\}
  \Bigg) \leq C\delta'. \label{eq:weighted-to-inner}
 \end{equation}
 Regardless of the value of $\bm{I}_2(L)$, there exists $M>0$ such that for every $\vareps>0$,
 there is an $s_{0}^\sss{(0)}>0$ such that for a vertex $q_0$ satisfying $D_{q_0}\big((1-\delta')t\big)\geq s_{0}^\sss{(0)}$,
 \begin{equation}
  \P\Bigg(\bigcup_{i\leq K_{t,t}/2} \big\{d_L^{(t_i)}(q, \mathrm{Core}^\sss{(i)}) \geq (1+\vareps)Q_{t,t_i} + M\big\}
  \Bigg) \leq C\delta'. \label{eq:weighted-to-inner-weak}
 \end{equation}
\end{proposition}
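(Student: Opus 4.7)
The plan is to implement the iterative two-connector procedure described in the methodology, working in $\mathrm{PA}_{\hat{t}_i}$ with $\hat{t}_i := (1-\delta') t_i$ and using only edges born after $\hat{t} := (1-\delta')t$. For each fixed $i \leq K_{t,t}/2$, I set up a degree-threshold sequence $s_0^{\sss{(i)}} < s_1^{\sss{(i)}} < \cdots < s_{K^{\sss{(i)}}}^{\sss{(i)}}$ in which $s_0^{\sss{(i)}} := \delta' s_0^{\sss{(0)}} (\hat{t}_i/\hat{t})^{1/(\tau-1)}$ is the lower bound on $D_{q_0}^{\leftarrow}(\hat{t}_i)$ supplied by Lemma \ref{lemma:degree-lowerbound}, $s_{k+1}^{\sss{(i)}}$ equals $(s_k^{\sss{(i)}})^{1/(\tau-2)}$ up to slowly-varying correction factors $A_k$ introducing polylogarithmic slack, and $K^{\sss{(i)}}$ is the smallest $k$ for which $s_k^{\sss{(i)}}$ exceeds the inner-core threshold $\hat{t}_i^{1/(2(\tau-1))}\log^{-1/2}(\hat{t}_i)$. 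A direct computation with this doubly-exponential recursion gives $K^{\sss{(i)}} \leq K_{t,t_i}/2 + M_2$ for an absolute constant $M_2$, matching the scaling in \eqref{eq:qt-kt}.

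At iteration $k$, starting from a vertex $v_k$ with $D_{v_k}(\hat{t}_i) \geq s_k^{\sss{(i)}}$, the key estimate is that the number $N_k$ of internally vertex-disjoint two-edge paths from $v_k$ to distinct vertices of $\hat{t}_i$-degree at least $s_{k+1}^{\sss{(i)}}$ satisfies $N_k \geq \exp\bigl((\tau-2)^{-(k+k_i)/2}\bigr)$ on an event whose complement has polynomially small probability in $\hat{t}_i$; here $k_i$ is an index shift aligning the iteration index with the summation range in \eqref{eq:qt-kt}. This candidate-count bound follows by computing the expectation via the connection probabilities \eqref{eq:intro-p-conn}---the power-law degree tail makes the expectation doubly-exponentially large in $k$---and concentrating around the mean by a martingale/second-moment argument along the lines of \cite[Section 3]{jorkom2019weighted}. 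Among these $N_k$ candidates, whose $2N_k$ underlying edges carry i.i.d.\ copies of $L$, greedy selection of the two-path of smallest total weight gives by the order-statistics bound of Lemma \ref{lemma:methods-of-minima} a total weight bounded by $F_L^{(-1)}\bigl(\exp(-(\tau-2)^{-(k+k_i)/2})\bigr)$ up to a $k$-summable additive correction.

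Summing the per-step weight bounds over $0 \leq k < K^{\sss{(i)}}$ and matching the resulting sum to $Q_{t,t_i}$ via \eqref{eq:qt-kt} yields
\begin{equation*}
  d_L^{(t_i)}\bigl(q_0, \mathrm{Core}^{\sss{(i)}}\bigr) \leq Q_{t,t_i} + M_3,
\end{equation*}
where finiteness of $M_3$ uses the termwise comparison with $\bm{I}_2(L) < \infty$ exactly as in \cite[Theorem 2.8]{jorkom2019weighted}, giving \eqref{eq:weighted-to-inner}. When $\bm{I}_2(L)=\infty$ the sequence $Q_{t,t_i}$ diverges as $t\to\infty$, so a bounded shift in the summation indices inflates the partial sum by at most a multiplicative factor $1+\varepsilon$ for any prescribed $\varepsilon>0$, yielding \eqref{eq:weighted-to-inner-weak}. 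The main obstacle is the uniform control over $i$: all per-iteration failure probabilities (from Lemma \ref{lemma:degree-lowerbound}, the candidate-count concentration, and the minimum-weight bound) must be summable over $i \leq K_{t,t}/2$. I handle this by inserting $\log^{-c}(t_i/t)$-type slack into the $A_k$ factors (mirroring the $\log^{-3}(t')$ in the lower-bound array \eqref{eq:ell}), after which the $i$-sum is dominated by the geometric decay $\exp(-c \log(t_i/t))$ already exploited in \eqref{eq:lower-summable-ti}.
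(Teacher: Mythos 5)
Your overall strategy mirrors the paper's: the two-connector iteration with a degree-threshold sequence, greedy minimum-weight selection via Lemma~\ref{lemma:methods-of-minima}, and uniform control over $i$ along the subsequence $(t_i)$. However, there is a concrete gap in the per-iteration weight bound that makes the claimed conclusion false as stated. Each greedy two-connector step travels two edges, so its contribution to the weighted distance is a copy of $L_1+L_2$ (two i.i.d.\ edge weights), and for $k$ large this is bounded below by $2b$ where $b=\inf\supp(L)$. You bound the per-step weight by a single quantile $F_L^{(-1)}\big(\exp(-(\tau-2)^{-(k+k_i)/2})\big)$, which tends to $b$, not $2b$. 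Summing this over $K^{\sss{(i)}}\approx K_{t,t_i}/2$ iterations produces roughly half the leading term of $Q_{t,t_i}$; for any $L$ bounded away from zero this is strictly smaller than $Q_{t,t_i}-M$, which contradicts the lower bound of Proposition~\ref{proposition:lower-bound-weighted}. The paper instead bounds the per-iteration weight by the two-fold convolution quantile $F_{L_1+L_2}^{(-1)}\big((n_k^{\sss{(i)}})^{-1+\xi}\big)$ and then matches each iteration to the \emph{pair} of $Q$-terms at indices $2k-1$ and $2k$, which is how the $K_{t,t_i}/2$-term iteration sum recovers the full $K_{t,t_i}$-term sum $Q_{t,t_i}$.

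A related quantitative issue: your claimed connector-count scaling $N_k\geq\exp\big((\tau-2)^{-(k+k_i)/2}\big)$ is too weak. With the degree-threshold recursion $s_{k+1}^{\sss{(i)}}\approx (s_k^{\sss{(i)}})^{1/(\tau-2)}$ (modulated by $\varepsilon_k=k^{-2}$ as in \eqref{eq:sk}), the connector count is of order $(s_k^{\sss{(i)}})^{\varepsilon_k}$, so $\log n_k^{\sss{(i)}}$ grows like $k^{-2}(\tau-2)^{-k}\log s_0^{\sss{(0)}}$ — the rate $(\tau-2)^{-k}$, not $(\tau-2)^{-k/2}$. This stronger rate is exactly what is needed once the per-step bound is the convolution quantile, since it must dominate $\exp\big(-(\tau-2)^{-k}\big)$ up to polynomial slack that $\bm{I}_2(L)<\infty$ absorbs. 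Finally, the uniformity over $i$ does not require inserting $\log^{-c}(t_i/t)$ slack into the thresholds; in the paper the summability over $i$ comes for free because $s_0^{\sss{(i)}}=\delta' s_0^{\sss{(0)}}(\hat{t}_i/t)^{1/(\tau-1)}$ itself grows polynomially in $t_i/t$, making the failure probabilities $\exp\big(-(n_k^{\sss{(i)}})^\xi\big)$ doubly exponentially small in $i$.
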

\noindent
The bounds on the weighted distance in Proposition \ref{proposition:weighted-to-inner} are realized by constructing the segments $\ora{\bm{\pi}}_{u,1}^\sss{(i)}$ and $\ora{\bm{\pi}}_{v,1}^\sss{(i)}$, whose total weight we bound from above. For this we follow the same ideas as in \cite[Proposition 3.4]{jorkom2019weighted}, up to computational differences. Therefore, the proof we give here is not completely self-contained and for some bounds we will refer the reader to \cite{jorkom2019weighted}.
\subsubsection*{Preparations for the proof of Proposition {\ref{proposition:weighted-to-inner}}}
For some constants $s_{0}^\sss{(0)}, \delta'>0$, the sequence $\vareps_k:=(k+1)^{-2}$, and $\hat{t}_i$ from \eqref{eq:core}, define the degree threshold sequence
\begin{numcases}
 {s_{k}^\sss{(i)} =}
 \delta' s_{0}^\sss{(0)} \left(\hat{t}_i/t\right)^{\frac{1}{\tau-1}} & $k=0$, \label{eq:sk-init}\\
 \min\left\{\big(s_{k-1}^\sss{(i)}\big)^{(1-\vareps_k)/(\tau-2)}, \hat{t}_i^{\frac{1}{2(\tau-1)}}\log^{-\frac{1}{2}}(\hat{t}_i)\right\} & $k>0$.\label{eq:sk}
\end{numcases}
For each time $t_i$, the initial value $s_{0}^\sss{(i)}$ is chosen such that it matches the bound on the degree in \eqref{eq:degree-lowerbound}.
The maximum value of $s_k^\sss{(i)}$, for each fixed $\hat{t}_i$, matches the condition for vertices to be in the $i$-th inner core, see \eqref{eq:core}.
Set
\be
\kappa^\sss{(i)} := \min\{k: s_{k+1}^\sss{(i)}=s_{k}^\sss{(i)}\}.\label{eq:kti}
\ee
Denote by $\CL_{k}^\sss{(i)}$ the $k$-th vertex layer: the set of vertices with degree at least  $s_{k}^\sss{(i)}$ at time $\hat{t}_i$, i.e.,
\begin{align}
 \CL_{k}^\sss{(i)} & := \{x\in[\hat{t}_i]: D_x(\hat{t}_i)\geq s_{k}^\sss{(i)}\}.\label{eq:layers}
\end{align}
The path segment $\ora{\bm{\pi}}_{q,1}^\sss{(i)}$ to the inner core has length $2\kappa^\sss{(i)}$ and uses alternately a \emph{young} vertex $y_{k}^\sss{(i)}\in [\hat{t}_i, t_i]$ and an \emph{old} vertex $\pi_{k}^\sss{(i)}$ from the layer $\CL_{k}^\sss{(i)}$.
Thus, for $\pi_0:=q_0$, $\ora{\bm{\pi}}_{q,1}^\sss{(i)}$ has the form $\ora{\bm{\pi}}^\sss{(i)}_{q,1}= (\pi_0, y_{1}^\sss{(i)}, \pi_{1}^\sss{(i)},\dots,y_{\kappa^\sss{(i)}}^\sss{(i)}, \pi_{\kappa^\sss{(i)}}^\sss{(i)})$.
To keep notation light we omit a subscript $q$ for the individual vertices on the segments $\ora{\bm{\pi}}^\sss{(i)}_{q,1}$ for $q\in\{u,v\}$.

In the next lemmas, we show that $\ora{\bm{\pi}}^\sss{(i)}_{q,1}$ exists for all $i$ w/p close to one, and bound its total weight. We outline the steps briefly.
Using the choice of $s_{k}^\sss{(i)}$, we bound $\kappa^\sss{(i)}$ in terms of $K_{t,t_i}$.
Then, since the number of vertices that have degree at least $s_{k+1}^\sss{(i)}$ at time $\hat{t}_i$ is sufficiently large,
it is likely that there are \emph{many} connections from a vertex $\pi_{k}^\sss{(i)}$ via a connector vertex   $y_{k+1}^\sss{(i)}$ to the $(k+1)$-th layer.
We denote the set of connectors by $\CA_{k+1}^\sss{(i)}\big(\pi_{k}^\sss{(i)}\big)$, i.e., for a vertex $\pi_{k}^\sss{(i)}\in\CL_{k}^\sss{(i)}$,
\begin{equation}
 \CA_{k+1}^\sss{(i)}\big(\pi_{k}^\sss{(i)}\big) := \{y\in(\hat{t}_i, t_i]: \exists x_{k+1}^\sss{(i)}\in \CL_{k+1}^\sss{(i)}: \pi_k^\sss{(i)}\leftrightarrow y \leftrightarrow x_{k+1}^\sss{(i)}\}.\nonumber
\end{equation}
Given $(\pi_{0},\pi_{1}^\sss{(i)},\dots,\pi_{k}^\sss{(i)})$, we greedily set, if $\CA_{k+1}^\sss{(i)}(\pi_{k}^\sss{(i)})$ is non-empty,
\begin{equation}
 (y_{k+1}^\sss{(i)},\pi_{k+1}^\sss{(i)}) := \argmin_{\substack{(y, x_{k+1}^\sss{(i)})\in \\ [\hat{t}_i, t_i]\times \CL_{k+1}^\sss{(i)}}} \big\{L_{(\pi_{k}^\sss{(i)}, y)} + L_{(y, x_{k+1}^\sss{(i)})}\big\}.
 \label{eq:choice-greedy}
\end{equation}
If there exists $k\leq \kappa^\sss{(i)}$ such that $\CA_{k+1}^\sss{(i)}(\pi_{k}^\sss{(i)})=\varnothing$, we say that the construction has failed.
When the construction succeeds, we can bound the weighted distance to the inner core, i.e.,
\begin{equation}
 d_L^{(t_i)}(\pi_0, \mathrm{Core}^\sss{(i)})  \leq \sum_{k=0}^{\kappa^\sss{(i)}-1}d_L^{(t_i)}(\pi_{k}^\sss{(i)}, \pi_{k+1}^\sss{(i)})\leq \sum_{k=0}^{\kappa^\sss{(i)}-1}\big(L_{(\pi_{k}^\sss{(i)}, y_{k+1}^\sss{(i)})} + L_{(y_{k+1}^\sss{(i)}, \pi_{k+1}^\sss{(i)})}\big).
 \label{eq:greedy-weight}
\end{equation}
We show that for all $i$ there exists a sequence $(n_k^\sss{(i)})_{k\leq \kappa^{(i)}}$ such that $|\CA_{k+1}^\sss{(i)}\big(\pi_{k}^\sss{(i)}\big)|\geq n_{k}^\sss{(i)}$ for all $k\leq \kappa^\sss{(i)}$ w/p close to one.
This allows to bound the minimal weight in the rhs of \eqref{eq:choice-greedy} from above, so that eventually this yields an upper bound for the rhs in \eqref{eq:greedy-weight}.

We start with a lemma that relates $K_{t,t_i}$ to $\kappa^\sss{(i)}$, half the length of $\ora{\bm{\pi}}^\sss{(i)}_{q,1}$. Also, we show that $k\mapsto s_{k}^\sss{(i)}$ is bounded from below by a doubly exponentially growing sequence.
\begin{lemma}
 Let $\big(s_{k}^\sss{(i)}\big)_{i \leq K_{t,t}/2,k\leq \kappa^\sss{(i)}}$ as in \eqref{eq:sk}, with $s_{0}^\sss{(0)}$ sufficiently large. Then
 \begin{equation}
  s_{k}^\sss{(i)} \geq \left(\delta' s_{0}^\sss{(0)}\left(\hat{t}_i/t\right)^{1/(\tau-1)}\right)^{c'(\tau-2)^{-k}}\label{eq:bound-sik}
 \end{equation}
 for some constant $c'>0$. There exists $M\in\mathbb{N}$ such that for $\kappa^\sss{(i)}$ defined in \eqref{eq:kti} and $i\leq K_{t,t}/2$
 \begin{equation}
  \kappa^\sss{(i)}
  \leq
  K_{t,t_i}/2 + M.\label{eq:bound-kti}
 \end{equation}
 \begin{proof}
  By our choice $\vareps_k=(k+1)^{-2}$ before \eqref{eq:sk}, it holds that $\prod_{j=1}^\infty(1-\vareps_k)>0$.
  The bound \eqref{eq:bound-sik} follows immediately from the definition of $(s^\sss{(i)}_k)$ in \eqref{eq:sk}.
  From \cite[Lemma 3.7]{jorkom2019weighted}  the bound \eqref{eq:bound-kti} immediately follows for $i=0$, leaving to verify the bound for $i\geq 1$.
  By the choice of $\kappa^\sss{(i)}$ in \eqref{eq:kti} and $(s_{k}^\sss{(i)})$ in \eqref{eq:sk}, and the bound \eqref{eq:bound-sik}, for any $k\geq \kappa^\sss{(i)}$ it holds that
  \begin{align*}
   \left(\delta' s_{0}^\sss{(0)}\left(\hat{t}_i/t\right)^{1/(\tau-1)}\right)^{c'(\tau-2)^{-k}}
    & \geq
   \hat{t}_i^{\frac{1}{2(\tau-1)}}\log^{-\frac{1}{2}}(\hat{t}_i)
  \end{align*}
  Taking logarithms twice, and rearranging gives that for $k\geq \kappa^\sss{(i)}$,
  \begin{align*}
   k \log(1/(\tau-2)) + \log\log(\hat{t}_i/t) & + \log\left(1 + \frac{(\tau-1)\log\big(\delta' s_{0}^\sss{(0)}\big)}{\log(\hat{t}_i/t)}\right) \\
                                              & \geq
   \log\log(\hat{t}_i) + \log\left(\frac{1}{2c'} - \frac{\tau-1}{2c'}\frac{\log\log(\hat{t}_i)}{\log(\hat{t}_i)}\right).
  \end{align*}
  From \eqref{eq:core} it follows for all $i\leq K_{t,t}/2$ that $\hat{t}_i\geq (1-\delta')t$. Thus, the last terms on both lines are bounded by a constant for large $t$. Hence, there is $M=M(\tau)$ such that if $s_{0}^\sss{(0)}\geq 1/\delta' $, $i\geq 1$
  \[
   \kappa^\sss{(i)}
   \leq
   \frac{\log\log(\hat{t}_i) - \log \log(\hat{t}_i/t)}{|\log(\tau-2)|}  + M'.
  \]
  By construction of $(t_i)_{i\leq K_{t,t}/2}$ in \eqref{eq:lower-ti}, there exists $b>0$ such that $\hat{t}_i\leq t^b$, for all $i\leq K_{t,t}/2$.
  This relates $\log\log(\hat{t}_i)$ to $\log\log(t)$. Hence, there exists $M$ such that \eqref{eq:bound-kti} holds for $i\geq 1$,
  recalling $\hat{t}_i=(1-\delta')t_i$, and the definition of $K_{t,t'}$ in \eqref{eq:kt}.
 \end{proof}
\end{lemma}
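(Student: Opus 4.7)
The plan is to prove the two bounds in turn, exploiting that the recursion in \eqref{eq:sk} is explicitly iterable as long as the cap is inactive.

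For the doubly-exponential lower bound \eqref{eq:bound-sik}, I would observe that for every $k\le \kappa^\sss{(i)}$ the minimum in \eqref{eq:sk} is attained by the first branch, so
\[
\log s_k^\sss{(i)} = \frac{1-\vareps_k}{\tau-2}\log s_{k-1}^\sss{(i)}.
\]
Iterating yields $\log s_k^\sss{(i)} = (\tau-2)^{-k}\prod_{j=1}^k(1-\vareps_j)\cdot \log s_0^\sss{(i)}$. By the choice of $(\vareps_k)$ the infinite product $\prod_{j\ge 1}(1-\vareps_j)$ converges to a strictly positive constant, so there exists $c'>0$ with $\prod_{j=1}^k(1-\vareps_j)\ge c'$ for every $k$. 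Substituting $s_0^\sss{(i)}=\delta' s_0^\sss{(0)}(\hat t_i/t)^{1/(\tau-1)}$ and taking $s_0^\sss{(0)}$ large enough that the base exceeds $1$ so that $\log s_0^\sss{(i)}>0$, this yields \eqref{eq:bound-sik} for $k\le \kappa^\sss{(i)}$. For $k>\kappa^\sss{(i)}$ the sequence stabilises at the cap, and \eqref{eq:bound-sik} is then inherited from its value at $k=\kappa^\sss{(i)}$.

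For \eqref{eq:bound-kti}, my strategy is to identify the smallest $k$ for which the lower bound from \eqref{eq:bound-sik} already exceeds the cap $\hat t_i^{1/(2(\tau-1))}\log^{-1/2}(\hat t_i)$. Setting these equal, taking two successive logarithms and rearranging delivers
\[
k\,|\log(\tau-2)| \ge \log\log\hat t_i - \log\log(\hat t_i/t) + O(1),
\]
where the $O(1)$ absorbs $\log c'$, the constant $\log(1/(2(\tau-1)))$, and the subleading correction $\log\log(\hat t_i)/\log(\hat t_i)$ arising from the $\log^{-1/2}(\hat t_i)$ factor, all of which stay bounded because $\hat t_i\ge(1-\delta')t\to\infty$. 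For $i\ge 1$, the lower bound $t_i\ge t\exp((\tau-2)^{-i+1})\ge t\re$ from \eqref{eq:lower-ti} ensures $\log(\hat t_i/t)$ is bounded away from zero so the iterated logarithm is well-defined, while the upper bound $\hat t_i\le t^b$ from the same equation (valid for $i\le K_{t,t}/2$) gives $\log\log\hat t_i = \log\log t + O(1)$ and $\log\log(\hat t_i/t)=\log(1\vee\log(t_i/t))+O(1)$. Comparing with the definition of $K_{t,t_i}/2$ in \eqref{eq:qt-kt} produces the required uniform bound $\kappa^\sss{(i)}\le K_{t,t_i}/2+M$. The case $i=0$ is the static setting and is already covered by \cite[Lemma 3.7]{jorkom2019weighted}.

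The main obstacle I anticipate is bookkeeping the $O(1)$ constants uniformly in $i$: the $\log^{-1/2}(\hat t_i)$ correction in the inner-core threshold generates, after two logarithms, a term of order $\log\log(\hat t_i)/\log(\hat t_i)$, and I need this uniformly bounded over $1\le i\le K_{t,t}/2$—which is secured by the lower bound $\hat t_i\ge (1-\delta')t$. A second subtlety is that imposing $s_0^\sss{(0)}\ge 1/\delta'$ is needed to keep $\log s_0^\sss{(i)}$ positive, so that the doubly-exponential iteration in part one preserves its sign and the monotone structure of the bound is not inverted at small $i$.
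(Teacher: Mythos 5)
Your proof follows the same route as the paper's: iterate the first branch of \eqref{eq:sk} while the cap is inactive to obtain the doubly-exponential lower bound, then determine the smallest $k$ at which this bound reaches the inner-core cap $\hat t_i^{1/(2(\tau-1))}\log^{-1/2}(\hat t_i)$, converting $\log\log\hat t_i$ and $\log\log(\hat t_i/t)$ to the quantities in $K_{t,t_i}$ via $\hat t_i\le t^b$ and $\hat t_i\ge(1-\delta')t$. The computation and conclusion agree with the paper's, and the case $i=0$ is delegated to the same reference. Two small points are worth tightening. First, the first branch of \eqref{eq:sk} is active only for $k<\kappa^\sss{(i)}$; at $k=\kappa^\sss{(i)}$ the cap is hit by definition \eqref{eq:kti}, so the exact identity $\log s_k^\sss{(i)}=\frac{1-\vareps_k}{\tau-2}\log s_{k-1}^\sss{(i)}$ holds only for $k<\kappa^\sss{(i)}$. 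The crucial step in part two should then be stated as: writing $L(k):=\big(\delta' s_0^\sss{(0)}(\hat t_i/t)^{1/(\tau-1)}\big)^{c'(\tau-2)^{-k}}$ and $C$ for the cap, one has $L(k)\le s_k^\sss{(i)}<C$ for all $k<\kappa^\sss{(i)}$, so the smallest $k^*$ with $L(k^*)\ge C$ satisfies $k^*\ge\kappa^\sss{(i)}$, and your estimate on $k^*$ then bounds $\kappa^\sss{(i)}$. Making this implication explicit is worthwhile: the paper's phrasing (``for any $k\ge\kappa^\sss{(i)}$, $L(k)\ge C$'') reads in the less useful direction, and your framing via the threshold $k^*$ is the cleaner way to extract an \emph{upper} bound on $\kappa^\sss{(i)}$. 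Second, your closing claim that \eqref{eq:bound-sik} for $k>\kappa^\sss{(i)}$ is ``inherited from its value at $k=\kappa^\sss{(i)}$'' is false---$L(k)$ keeps increasing while $s_k^\sss{(i)}$ is frozen at the cap, so the inequality eventually reverses---but since the lemma's stated range is $k\le\kappa^\sss{(i)}$ this is harmless and should simply be dropped.
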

We now prove Proposition \ref{proposition:weighted-to-inner}. We construct the segment $\ora{\bm{\pi}}^\sss{(i)}_{q,1}$ for $i\leq \kappa^\sss{(i)}$ and $q\in\{u,v\}$.
\begin{proof}[Proof of Proposition \ref{proposition:weighted-to-inner}]
 Let $\CE_{\text{deg}}:=\{\forall i\geq 0: D_{q_0}^{\leftarrow}(\hat{t}_i)\geq s_{0}^\sss{(i)}\}$.
 By Lemma \ref{lemma:degree-lowerbound} and the choice of $(s_{0}^\sss{(i)})_{i\geq 0}$ in \eqref{eq:sk-init} we have that
 \[
  \P(\neg\CE_{\text{deg}}) \leq c\delta'.
 \]
 We write $\P_\text{deg}(\cdot):=\P(\,\cdot\mid\CE_{\text{deg}})$.
 We will first show that w/p close to one, the sets of connectors are sufficiently large. More precisely, for a set of vertices  $\{\pi_{k}^\sss{(i)}\}_{k\leq \kappa^\sss{(i)}, i\leq K_{t,t}/2}$, such that $\pi_{k}^\sss{(i)}\in\CL_{k}^\sss{(i)}$ and setting $\pi_{0}^\sss{(i)}:=\pi_0=q_0$ for all $i$, we show that
 \begin{equation}
  \P_\text{deg}\Bigg(\bigcup_{i\leq K_{t,t}/2}\bigcup_{k\leq \kappa^\sss{(i)}} \left\{|\CA_{k+1}^\sss{(i)}\big(\pi_{k}^\sss{(i)}\big)| \leq n_{k}^\sss{(i)}\right\}\Bigg) \leq \delta_1\big(s_{0}^\sss{(0)}\big),\label{eq:upper-connectors}
 \end{equation}
 where $\delta_1\big(s_{0}^\sss{(0)}\big)$ is a function that tends to 0 as $s_{0}^\sss{(0)}$ tends to infinity and, for $c_1>0$ chosen below,
 \be
 n_{k}^\sss{(i)}:=c_1\delta' \big(s_{k}^\sss{(i)}\big)^{\vareps_k}.\label{eq:upper-nik}
 \ee
  Then, conditioning on the complement of the event in \eqref{eq:upper-connectors}, we will bound the minimal weight of connections to $\CL_{k+1}^\sss{(i)}$ via the sets $\CA_{k+1}^\sss{(i)}\big(\pi_k^\sss{(i)}\big)$ using \eqref{eq:choice-greedy} and arrive to \eqref{eq:weighted-to-inner} and \eqref{eq:weighted-to-inner-weak} using the construction of the greedy path in \eqref{eq:choice-greedy}.
 We follow the same steps as in \cite[Lemma 3.10]{jorkom2019weighted}. For notational convenience we leave out the superscript $(i)$ for the various sequences and sets whenever it is clear from the context.
 For a set of vertices $\CV\subset [t']$, define $D_\CV^{\leftarrow}(t'):=\sum_{x\in\CV}D_x^{\leftarrow}(t')$.
 By Lemma \ref{lemma:upper-p-connector} in the appendix, the probability that an arbitrary vertex in $(\hat{t}_i,t_i]$ is in $\CA_{k+1}(\pi_k)$, is at least
 \be
 p_{k}(\pi_k, \CL_{k+1}):=\frac{1}{\hat{t}_i^2}\eta D_{\pi_{k}}^{\leftarrow}(\hat{t}_i)D^{\leftarrow}_{\CL_{k+1}}(\hat{t}_i),\label{eq:pk}
 \ee
 for some constant $\eta>0$, where this event  happens independently of the other vertices. Since the set $(\hat{t}_i,t_i]$ contains $\delta' t_i$ vertices, the random variable $|\CA_{k+1}(\pi_k)|$ stochastically dominates a binomial random variable, i.e.,
 \begin{equation}
  |\CA_{k+1}(\pi_k)| \overset{d}\geq \mathrm{Bin}\big(\delta' t_i, p_k(\pi_{k}, \CL_{k+1})\big) =: A_{k}.\label{eq:connectors-stoch-dom}
 \end{equation}
 Let $c_2$ be the constant from Lemma \ref{lemma:upper-impact}. After conditioning on $D^{\leftarrow}_{\CL_{k+1}}(\hat{t}_i)$, one obtains that
 \begin{equation}
  \E[A_k] \geq \E\big[A_k \mid D^{\leftarrow}_{\CL_{k+1}}(\hat{t}_i) \geq c_2 \hat{t}_is_{k+1}^{2-\tau}\big]
  \cdot \P\big(D^{\leftarrow}_{\CL_{k+1}}(\hat{t}_i)\geq c_2 \hat{t}_is_{k+1}^{2-\tau}\big),
  \label{eq:connector-exp-ak}
 \end{equation}
 where the latter factor equals $1-o(1)$ by Lemma \ref{lemma:upper-impact}. Since $\pi_{k}\in\CL_{k}$, we have that $D^{\leftarrow}_{\pi_{k}}(\hat{t}_i)\geq s_{k}$.
 We substitute this and the conditioned bound on $D^{\leftarrow}_{\CL_{k+1}}(\hat{t}_i)$ in \eqref{eq:connector-exp-ak} into $p_k(\pi_{k},\CL_{k+1})$ in \eqref{eq:pk}.
 By the recursive definition of $s_{k}^\sss{(i)}$ in \eqref{eq:sk} and $n_{k}^\sss{(i)}$ in \eqref{eq:upper-nik} we obtain that there exists $c_1>0$ such that
 \[
  \E[A_k] \geq \delta' t_i \frac{\eta c_2\hat{t}_i(s_{k+1}^\sss{(i)})^{2-\tau}s_k^\sss{(i)}}{\hat{t}_i^2}(1-o(1))
  \geq 2c_1\delta' (s_{k}^\sss{(i)})^{\vareps_k}=2n_{k}^\sss{(i)}.
 \]
 An application
 of Chernoff's bound and the constructed stochastic domination \eqref{eq:connectors-stoch-dom} yields for  $\pi_{k}^\sss{(i)}\in\CL_{k}^\sss{(i)}$ that
 \[
  \P_\text{deg}\big(|\CA_{k+1}^\sss{(i)}\big(\pi_{k}^\sss{(i)}\big)| \leq n_{k}^\sss{(i)}\big)
  \leq
  \exp\big(-n_{k}^\sss{(i)}/4\big).
 \]
 For details we refer the reader to \cite[Proof of Lemma 3.10]{jorkom2019weighted}.
 We return to \eqref{eq:upper-connectors}. By a union bound over the  layers $k\leq\kappa^\sss{(i)}$ and times $(t_i)_{i\leq K_{t,t}/2}$, it remains to show that
 \begin{equation}
  \sum_{i=1}^{K_{t,t}/2}\sum_{k=1}^{\kappa^\sss{(i)}}\exp\big(-n_{k}^\sss{(i)}/4\big)
  \longrightarrow 0, \qquad \text{as }s_{0}^\sss{(0)}\to\infty,\label{eq:weighted-error-1}
 \end{equation}
 with $n_{k}^\sss{(i)}$ from \eqref{eq:upper-nik}.
 We postpone showing this to the end of the proof.

 Define the conditional probability measure
 \[
  \P_\text{d,c}(\cdot):=\P\bigg(\cdot\,\Big|\,
  \CE_{\text{deg}}\cap
  \bigcap_{i=1}^{K_{t,t}/2}\bigcap_{k=1}^{\kappa^\sss{(i)}}
  \big\{|\CA_{k+1}^\sss{(i)}\big(\pi_{k}^\sss{(i)}\big)| > n_{k}^\sss{(i)}\big\}
  \bigg).
 \]
 Thus, the path segment $\ora{\bm{\pi}}_{q,1}$ from $q_0$ to the inner core exists $\P_{\text{d,c}}$-a.s.
 We greedily choose the vertices $(y_k^\sss{(i)}, \pi_{k+1}^\sss{(i)})$ as in \eqref{eq:choice-greedy}.
 We bound the weight of the segment, i.e., the rhs of \eqref{eq:greedy-weight} to prove \eqref{eq:weighted-to-inner} and \eqref{eq:weighted-to-inner-weak}.
 Let $L_{m,n}^\sss{(i)}$ be i.i.d.\ copies of $L$. Since the minimum of $N$ i.i.d.\ random variables is nonincreasing in $N$,
 the weighted distance between $\pi_{k}^\sss{(i)}$ and $\pi_{k}^\sss{(i)}$ can be bounded for $k\leq \kappa^\sss{(i)}-1$, i.e.,  for $k\leq \kappa^\sss{(i)}$ and $i\leq K_{t,t}/2$, $\P_{\text{d,c}}$-a.s.\
 \[
  d_L^{(t_i)}\big(\pi_{k}^\sss{(i)}, \pi_{k+1}^\sss{(i)}\big) \leq \min_{j\in[n_{k}^\sss{(i)}]} \big(L_{1,j}^\sss{(i)} + L_{2,j}^\sss{(i)}\big).
 \]
 Applying $(\star)$ in \eqref{eq:methods-of-minima} obtains for $\xi\in(0,1)$ that
 \be
 \P_{\text{d,c}}\Big(d_L^{(t_i)}\big(\pi_{k}^\sss{(i)}, \pi_{k+1}^\sss{(i)}\big)\geq F_{L_1 + L_2}^{(-1)}\Big(\big(n_{k}^\sss{(i)}\big)^{-1+\xi}\Big)\Big)
 \leq
 \exp\big(-\big(n_{k}^\sss{(i)}\big)^\xi\big),\nonumber
 \ee
 where $F_{L_1 + L_2}^{(-1)}$ denotes the generalized inverse of the distribution of the sum of two i.i.d.\ copies of $L$.
 Recall the bound \eqref{eq:greedy-weight}. By a union bound over the subsegments $(\pi_k^\sss{(i)}, y_k^{i}, \pi_{k+1}^\sss{(i)})$ for $k\leq\kappa^{(i)}$ and the times $(t_i)_{i\leq K_{t,t}/2}$,
 \begin{align}
  \P_{\text{d,c}}\Bigg(\bigcup_{i\leq K_{t,t}/2}\Big\{
  d_L^{(t_i)}(q_0, \mathrm{Core}^\sss{(i)})
  \geq
  \sum_{k=0}^{\kappa^\sss{(i)}-1} & F_{L_1 + L_2}^{(-1)}\Big(\big(n_{k}^\sss{(i)}\big)^{-1+\xi}\Big)
  \Big\}\Bigg)\nonumber                                                                              \\
                                  & \leq
  \sum_{i=0}^{K_{t,t}/2}\sum_{k=0}^{\kappa^\sss{(i)}-1}\exp\big(-\big(n_{k}^\sss{(i)}\big)^\xi\big).\label{eq:weighted-error-2}
 \end{align}
 To bound the error probabilities in \eqref{eq:weighted-error-1} and \eqref{eq:weighted-error-2}, and the sum inside the event in \eqref{eq:weighted-error-2}, we bound $n_k^\sss{(i)}$ from below. By its definition in \eqref{eq:upper-nik}, the bound on $t_i$ in \eqref{eq:lower-ti} and the bound on $s_{k}^\sss{(i)}$ in \eqref{eq:bound-sik}
 \begin{align}
  n_{k}^\sss{(i)}= c_1\delta'\big(s_{k}^\sss{(i)}\big)^{\vareps_k}
   & \geq
  c_1\delta'\left(\delta' s_{0}^\sss{(0)}\left(\hat{t}_i/t\right)^{1/(\tau-1)}\right)^{c'(\tau-2)^{-k}\varepsilon_k} \nonumber \\
  &=
c_1\delta'\left(\delta' s_{0}^\sss{(0)}(1-\delta')^{1/(\tau-1)}\exp\Big(\frac{\tau-2}{\tau-1}(\tau-2)^{-i}\Big)\right)^{c'(\tau-2)^{-k}\varepsilon_k} \nonumber
 \end{align}
 Assuming that  $s_0^\sss{(0)}\ge \delta'^2(1-\delta')^{2/(\tau-1)}$, we obtain since $\varepsilon_k=(k+1)^{-2}$ by definition above \eqref{eq:sk-init}
 \begin{align}
   n_{k}^\sss{(i)}&\ge
 c_1\delta'\left(\exp\Big(\frac{1}{2}\log(s_0^\sss{(0)})+\frac{\tau-2}{\tau-1}(\tau-2)^{-i}\Big)\right)^{c'(\tau-2)^{-k}\varepsilon_k}\nonumber\\
 &=c_1\delta'\exp\Big(\frac{c'}{2}\log(s_0^\sss{(0)})(k+1)^{-2}(\tau-2)^{-k}+\frac{\tau-2}{\tau-1}(k+1)^{-2}(\tau-2)^{-(i+k)}\Big).\label{eq:nki-upper}
\end{align}
 Since $(\tau-2)^{-k}$ grows exponentially for $\tau\in(2,3)$, while $\varepsilon_k=(1+k)^{-2}$ decreases polynomially, there exist $c_3>0$, $c_4>1$ such that
$
 n_k^\sss{(i)}
   \geq
 c_1\delta'\exp(\log(s_0)c_3c_4^{k} + c_3c_4^{k+i}).
$
Substituting this bound into \eqref{eq:weighted-error-1} and \eqref{eq:weighted-error-2}, respectively, we observe that the terms are summable in both $i$ and $k$ and tend to zero as $s_{0}^\sss{(0)}$ tends to infinity.

It is left to relate the sum on the rhs in the event in \eqref{eq:weighted-error-2} to the rhs in \eqref{eq:weighted-to-inner} and \eqref{eq:weighted-to-inner-weak}, respectively, assuming that $s_0^\sss{(0)}$ is large.
Recalling \eqref{eq:nki-upper}, we assume $s_0^\sss{(0)}$ is sufficiently large so that there exists $c_5>0$
\begin{align}
  (n_{k}^\sss{(i)})^{(1-\xi)}&\ge \exp\big(2c_5(k+1)^{-2}(\tau-2)^{-(i+k)}\big).\nonumber
\end{align}
Since $z\mapsto F_L^\sss{(-1)}(1/z)$ is nonincreasing and $\kappa^\sss{(i)}\le K_{t,t_i}/2 + M$ by \eqref{eq:bound-kti}, we obtain
\begin{align}
  \sum_{k=0}^{\kappa^\sss{(i)}-1}  F_{L_1 + L_2}^{(-1)}\Big(\big(n_{k}^\sss{(i)}\big)^{-1+\xi}\Big)
  \le
  \sum_{k=0}^{K_{t,t_i}/2 + M-1} F_{L_1 + L_2}^{(-1)}\Big(\exp\big(-2c_5(k+1)^{-2}(\tau-2)^{-(i+k)}\big)\Big).\label{eq:weighted-error-3}
\end{align}
Recall $\varepsilon>0$ from the statement of Proposition \ref{proposition:weighted-to-inner}. In Claim \ref{claim:upper-bound-transforms} we show that for all $\varepsilon>0$ there exists $M>0$ such that
 \begin{equation}
\sum_{k=0}^{K_{t,t_i}/2 + M-1} F_{L_1 + L_2}^{(-1)}\Big(\exp\big(-c_5(k+\zeta)^{-2}(\tau-2)^{-(i+k)}\big)\Big)
   \le
   (1+\varepsilon\ind{\bm{I}_2(L)=\infty})Q_{t,t_i} + M.
 \end{equation}
 Substituting this bound inside the event in \eqref{eq:weighted-error-2} and recalling that $s_{0}^\sss{(0)}$ is chosen sufficiently large so that the total error probability from \eqref{eq:weighted-error-1} and \eqref{eq:weighted-error-2} is at most $C\delta'$ yields Proposition \ref{proposition:weighted-to-inner}.
\end{proof}

\subsection*{Step 4. Bridging the inner core}
We prove a lemma that shows that the path segments $(\bm{\pi}_{\text{core}}^\sss{(i)})_{i\leq K_{t,t}/2}$  exist and their total weight is  bounded from above by a constant for all $i$. Recall $\mathrm{Core}^\sss{(i)}$ from \eqref{eq:core}.
\begin{lemma}\label{lemma:upper-core}
 Consider the preferential attachment model with power-law exponent $\tau\in(2,3)$.
 Let $\{w_{u}^\sss{(i)},w_{v}^\sss{(i)}\}_{i\leq K_{t,t}/2}$ be a set of vertices such that for all $i$,  $w_{u}^\sss{(i)},w_{v}^\sss{(i)}\in\mathrm{Core}^\sss{(i)}$.
 Then for every $\delta'>0$, there exists $M>0$ such that
 \begin{equation}
  \P\Bigg(\bigcup_{i\leq K_{t,t}/2}\Big\{ d_L^{(t_i)}(w_{u}^\sss{(i)}, w_{v}^\sss{(i)})\geq M\Big\}\Bigg)\leq \delta'. \label{eq:upper-core}
 \end{equation}
 \begin{proof}
  From \cite[Proposition 3.2]{dommers2010diameters} it follows for FPA that for fixed $i$, whp,
  \be
  \P\Big(\text{diam}_G^{(t_i)}(\text{Core}^\sss{(i)})\leq \frac{2(\tau-1)}{3-\tau}+6\Big) \geq 1-o(1/t_i),\label{eq:diam-g-inner}
  \ee
  where  $\text{diam}^{(t')}_G(\CV):=\max_{w_1,w_2\in\CV} d_G^{(t')}(w_1,w_2)$ for a set of vertices $\CV\subset[t']$. The statement \eqref{eq:diam-g-inner} holds also for VPA as explained in \cite[Proof of Proposition 3.5]{jorkom2019weighted}.
  A union bound yields that
  \[
   \P\Bigg(\bigcup_{i\leq K_{t,t}/2}\Big\{ \text{diam}_G(\text{Core}^\sss{(i)})> \frac{2(\tau-1)}{3-\tau}+6\Big\}\Bigg) = \sum_{i=1}^{K_{t,t}/2} o(1/t_i)=o(1),
  \]
  since $K_{t,t}=O(\log\log(t))$, and $t_i$ is increasing in $i$.
  We sketch how to extend this result to weighted distances, using the construction in the proof of  \cite[Proposition 3.2]{dommers2010diameters} which in turn relies on \cite[Chapter 10]{bollobas2001random}.
  In \cite[Proposition 3.2]{dommers2010diameters} it is shown that the inner core dominates an Erd\H{o}s-R\'enyi random graph (ERRG)
  $G(n_{i}, p_{i})$, where there is an edge between two vertices $x,y\in\mathrm{Core}^\sss{(i)}$ if there is a connector in $[(1-\delta')t_i, t_i]$ in $\mathrm{PA}_{t_i}$, where
  \[
   n_{i} = \sqrt{t_i},\qquad p_{i} = \frac{1}{2}t_i^{\frac{1}{(\tau-1)}-1}\log^{-2}(t_i).
  \]
  The weight on the edge $(w_1,w_2)$ in the ERRG is $L_{(i,y)} + L_{(y,j)}$, where $y$ is a uniformly chosen connector of $w_1$ and $w_2$ in $\mathrm{PA}_{t_i}$.
  Now, for the construction used in \cite[Chapter 10]{bollobas2001random}, one can embed two $r_i$-regular trees of depth $\Delta>0$ in the ERRG, rooted in  $w_{u}^\sss{(i)}$ and $w_{v}^\sss{(i)}$, respectively, whp. Here $r_i \geq t_i^a,$ for some $a>0$, and $\Delta$ is a constant such that all vertices at distance $\Delta$ from their root are members of both trees.
  Denote this event by $\CE_{\text{tree}}$.
  On this event, there are at least $r_i$ disjoint paths from $w_{u}^\sss{(i)}$ to $w_{v}^\sss{(i)}$ in $\mathrm{PA}_{t_i}$ of $4\Delta$ edges, and we can bound
  \[d_L^{(t_i)}(w_{u}^\sss{(i)}, w_{v}^\sss{(i)})\leq \min_{n\leq r_i}\sum_{j=1}^{4\Delta}L_{j}^{(n)},\]
  for i.i.d.\ copies of $L$. Moreover, for $F_{L_1+\dots L_{4\Delta}}^{(-1)}$ being the distribution of the sum of $4\Delta$ i.i.d.\ copies of $L$, for $C$ sufficiently large
  \begin{equation}
   \P\Bigg(\bigcup_{i\leq K_{t,t}/2}\Big\{\min_{j\leq t_i^a}\sum_{j=1}^{4\Delta}L_{j}^{(n)} \geq C\Big\}\Bigg)
   \leq
   \sum_{i=1}^{K_{t,t}/2}\big(1-F_{L_1+\dots L_{4\Delta}}(C)\big)^{t_i^a} <\delta',\nonumber
  \end{equation}
  since by choosing $C$ large, but independently of $t$, $F_{L_1+\dots L_{4\Delta}}(C)$ can be brought arbitrarily close to 1. The asserted bound \eqref{eq:upper-core} follows from a union bound over the above event and $\neg\CE_{\text{tree}}$.
 \end{proof}
\end{lemma}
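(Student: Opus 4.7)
The plan is to reduce to a snapshot analysis at each fixed $t_i$ and then take a union bound over the $K_{t,t}/2=O(\log\log t)$ times, exploiting the very fast growth of $t_i$ (from \eqref{eq:lower-ti}, $t_i\ge t\exp((\tau-2)^{-i+1})$) to keep the cumulative error small. Conceptually, the lemma decomposes into two sub-claims: (i) for each $i$, $\mathrm{Core}^\sss{(i)}$ has graph diameter bounded by a constant depending only on $\tau$ with very high probability in $t_i$; (ii) for each $i$, between any two prescribed core vertices there are many short internally disjoint paths, so that the weighted distance is controlled by the minimum of i.i.d.\ sums of edge weights.

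For (i), I would invoke the static diameter bound for the inner core established for FPA in \cite[Proposition 3.2]{dommers2010diameters} and transferred to VPA via the remarks in \cite[Proof of Proposition 3.5]{jorkom2019weighted}, which gives $\mathrm{diam}_G^{(t_i)}(\mathrm{Core}^\sss{(i)})\le 2(\tau-1)/(3-\tau)+6$ with an error that decays at least as $o(1/t_i)$. For (ii), I would use the Erd\H{o}s--R\'enyi-type domination built into that same proof: the inner core dominates a $G(n_i,p_i)$ with $n_i\asymp\sqrt{t_i}$ and $p_i$ polynomial in $t_i$, where an ``edge'' corresponds to a two-step connection via a vertex in $[(1-\delta')t_i,t_i]$. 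Following the tree-embedding argument in \cite[Chapter 10]{bollobas2001random}, two $r_i$-regular trees of constant depth $\Delta$ can be embedded around $w_u^\sss{(i)}$ and $w_v^\sss{(i)}$ with $r_i\ge t_i^{a}$ for some $a>0$, and their leaf layers must overlap whp by a pigeonhole/birthday argument, producing $\ge r_i$ disjoint $(w_u^\sss{(i)},w_v^\sss{(i)})$-paths of length $4\Delta$ in $\mathrm{PA}_{t_i}$. On this event the weighted distance is at most the minimum of $r_i$ i.i.d.\ sums of $4\Delta$ copies of $L$; choosing $M$ large but fixed, this minimum exceeds $M$ with probability at most $(1-F_{L_1+\cdots+L_{4\Delta}}(M))^{t_i^{a}}$, which is super-exponentially small in $t_i$.

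The main obstacle, and the only reason this lemma is not an immediate corollary of the corresponding snapshot result in \cite{jorkom2019weighted}, is that the union bound must be closed simultaneously over all $i\le K_{t,t}/2$; a bare ``whp'' statement per time is not enough. What rescues us is the combination of the $o(1/t_i)$ error in the diameter step and the super-exponential decay in the tree-embedding/minimum-weight step: both yield bounds at least of polynomial decay in $t_i$, and $\sum_{i\ge 1}t_i^{-\varepsilon}=o(1)$ thanks to the super-polynomial growth $t_i\ge t\exp((\tau-2)^{-i+1})$. Hence, summing the two error contributions over $i\le K_{t,t}/2$ gives $o(1)$, and then enlarging $M$ brings the total error below $\delta'$, yielding \eqref{eq:upper-core}. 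The only care required is to track the probability decay in the static diameter proof carefully enough to ensure it is polynomial in $t_i$ rather than merely tending to zero.
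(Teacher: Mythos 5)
Your proposal follows the paper's proof essentially step for step: the static diameter bound from \cite[Proposition 3.2]{dommers2010diameters} (transferred to VPA via \cite{jorkom2019weighted}), the Erd\H{o}s--R\'enyi domination with $n_i \asymp \sqrt{t_i}$ and polynomial $p_i$, the tree-embedding argument from \cite[Chapter 10]{bollobas2001random} yielding $r_i \geq t_i^a$ disjoint constant-length paths, the minimum-of-i.i.d.-sums bound, and finally a union bound over $i \leq K_{t,t}/2$ that closes because the per-$i$ errors decay at least polynomially in $t_i$ and $t_i$ grows super-exponentially. No meaningful departure from the paper's argument.
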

\subsection*{Step 5. Gluing the segments}
We are ready to prove the main proposition of this section.
\begin{proof}[Proof of Proposition \ref{proposition:upper-bound}]
 Recall Lemma \ref{lemma:upper-sub}. We have to show that at the times $(t_i)_{i\geq0}$ there is a path from $u$ to $v$ such that its total weight is bounded from above by the rhs between brackets in \eqref{eq:upper-sub} and \eqref{eq:upper-sub-weak}, w/p at least $1-\delta$.
 Let $C_{\ref{proposition:weighted-to-inner}}$ be the constant from Proposition \ref{proposition:weighted-to-inner}.
 Set $\delta'=\delta/(7+2C_{\ref{proposition:weighted-to-inner}})$.
 Let $M_{\ref{lemma:upper-suff-degree}}$ and $M_{\ref{proposition:weighted-to-inner}}$ be the constants obtained from applying Lemma   \ref{lemma:upper-suff-degree} and Proposition \ref{proposition:weighted-to-inner} for $\delta'$, respectively.
 Lastly, let $M_{\ref{lemma:upper-core}}$ be the constant from applying Lemma \ref{lemma:upper-core} for $\delta'$.
 The existence of the path segments follows now directly from a union bound over the events described in Lemma \ref{eq:upper-suff-degree} and Proposition \ref{proposition:weighted-to-inner} for $q\in\{u,v\}$, and Lemma \ref{lemma:upper-core}. Hence, the summed error probability is
 $(2\cdot3+ 2C  + 1)\delta'=\delta$. The total weight of the constructed paths $(\bm{\pi}^\sss{(i)})_{i\geq0}$ is bounded from above by $2Q_{t,t_i}+2(M_{\ref{lemma:upper-suff-degree}}+M_{\ref{proposition:weighted-to-inner}}) + M_{\ref{lemma:upper-core}}$ for all $i\geq0$.
 Thus, setting $M_L:=2(M_{\ref{lemma:upper-suff-degree}}+M_{\ref{proposition:weighted-to-inner}}) + M_{\ref{lemma:upper-core}}$ in the statement of Proposition \ref{proposition:upper-bound} finishes the proof.
\end{proof}
\section{Acknowledgements}
The work of JJ and JK is partly supported by the Netherlands Organisation for Scientific Research (NWO) through grant NWO 613.009.122. We thank the referees for their careful reading and comments that led to significant improvements in the presentation and a remark about the summation interval for $Q_{t,t'}$ below Theorem \ref{thm:weighted-evolution}.
\appendix
\section{Preliminaries}
In order to bound the minimum of a sequence of i.i.d.\ random variables, we need the following lemma that we cite from \cite[Lemma 3.11]{jorkom2019weighted}.
\begin{lemma}[Minimum of i.i.d.\ random variables, {\cite[Lemma 3.11]{jorkom2019weighted}}]\label{lemma:methods-of-minima}
 Let $L_1,\dots,L_n$ be i.i.d.\ random variables having distribution $F_L$. Then for all $\xi>0$
 \begin{equation}
  \P\Big(\min_{j\in[n]} L_j \geq F_L^{(-1)}\big(n^{-1+\xi}\big)\Big) \overset{(\star)}{\leq}\re^{-n^\xi}, \qquad
  \P\Big(\min_{j\in[n]} L_j \leq F_L^{(-1)}\big(n^{-1-\xi}\big)\Big) \overset{(\ast)}{\leq}n^{-\xi}.\label{eq:methods-of-minima}
 \end{equation}
 \begin{proof}
  Since the random variables are i.i.d., for a function $z(n)$
  \[
   \P\Big(\min_{j\in[n]} L_j \geq F_L^{(-1)}\big(n^{-1+\xi}\big)\Big) = \big(1-F_L(z(n))\big)^{n}.
  \]
  We substitute $z(n)=F_L^{(-1)}\big(n^{-1\pm\xi}\big)$, so that applying $(1-x)^n\leq e^{-nx}$ yields $(\star)$ in \eqref{eq:methods-of-minima},
  and applying $(1-x)^n\geq 1-nx$ yields $(\ast)$.
 \end{proof}
\end{lemma}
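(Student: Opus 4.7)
My plan is to reduce both tail bounds to elementary inequalities on $(1-x)^n$ by exploiting the i.i.d.\ structure and then making a judicious choice of threshold via the generalized inverse of $F_L$. The starting point is the independence identity
\[
\P\bigl(\min_{j \in [n]} L_j > z\bigr) = \prod_{j=1}^n \P(L_j > z) = \bigl(1 - F_L(z)\bigr)^n,
\]
valid for every $z \in \mathbb{R}$, which turns both tail events into a matter of bounding $(1-F_L(z))^n$ at a well-chosen $z$.

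For the upper bound $(\star)$ I would take $z := F_L^{(-1)}\bigl(n^{-1+\xi}\bigr)$. By the defining property of the generalized inverse together with right-continuity of $F_L$, this gives $F_L(z) \geq n^{-1+\xi}$; applying the elementary inequality $(1-x)^n \leq e^{-nx}$ (valid for $x \in [0,1]$) at $x = F_L(z)$ then yields $\P(\min_j L_j \geq z) \leq e^{-n F_L(z)} \leq e^{-n^\xi}$. For the lower bound $(\ast)$ I would take $z := F_L^{(-1)}\bigl(n^{-1-\xi}\bigr)$ and use either a direct union bound $\P(\min_j L_j \leq z) \leq n\, F_L(z)$ or, equivalently, Bernoulli's inequality $(1-x)^n \geq 1-nx$ applied to the complementary event. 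At continuity points of $F_L$ this gives exactly $n \cdot n^{-1-\xi} = n^{-\xi}$, which is the target.

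The only mild technical point is the behaviour of the generalized inverse at atoms of $F_L$, where one has only the two-sided estimate $F_L(z^-) \leq n^{-1\pm\xi} \leq F_L(z)$. This is absorbed either by a standard $\varepsilon$-approximation $z \mapsto z - \varepsilon$ followed by $\varepsilon \downarrow 0$, or by distinguishing $\P(\min > z)$ from $\P(\min \geq z)$; the large gap between the exponential bound in $(\star)$ and the polynomial bound in $(\ast)$ leaves plenty of slack to absorb any such boundary loss. There is no substantive obstacle beyond this routine point.
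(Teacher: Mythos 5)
Your proof is essentially the paper's: both reduce the event to $\P(\min_j L_j > z) = (1-F_L(z))^n$, substitute $z=F_L^{(-1)}(n^{-1\pm\xi})$ using $F_L(z)\geq n^{-1\pm\xi}$ from right-continuity, and finish with $(1-x)^n\leq \re^{-nx}$ for $(\star)$ and a union bound (equivalently $(1-x)^n\geq 1-nx$) for $(\ast)$. Your extra remark about atoms is fair to raise, but the $\varepsilon$-shift you suggest recovers the bounds only for the \emph{strict} events $\{\min_j L_j>z\}$ and $\{\min_j L_j<z\}$, not the non-strict ones in the statement: at an atom one has $\P(\min_j L_j\geq z)=(1-F_L(z^-))^n$, which can be of order one (e.g.\ for $L$ supported on $\{0,1\}$ and $z=0$) even though $\re^{-n^\xi}\to 0$, so there is no slack to absorb; the lemma as written implicitly requires $F_L$ to be continuous at the relevant quantile, a point the paper's one-line proof also silently passes over.
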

\subsection{Lower bound}
\begin{proof}[Proof of Lemma \ref{lemma:number-of-paths}]
 We verify \eqref{eq:lower-paths-from-t} by induction.
 Recall the initial values in \eqref{eq:phi} and \eqref{eq:psi}. We initialize the induction for $j=1$. By \eqref{eq:pa-gamma}, since $x<t'$
 \[
  f^\sss{(t,t')}_{[i, i+1)}(t', x) = p(t',x)= \nu x^{-\gamma}t'^{\gamma-1} = \nu t'^{\gamma-1}\cdot x^{-\gamma} + 0\cdot x^{\gamma-1},
 \]
 establishing \eqref{eq:lower-paths-from-t} for $j=1$.
 We advance the induction so that we may assume \eqref{eq:lower-paths-from-t} for $j=k$.
 Then, using the definition of $f$ in \eqref{eq:lower-f}, which counts only the good paths and relies on the product form of $p$ in \eqref{eq:pa-gamma}, we can write
 \begin{align}
  f^\sss{(t,t')}_{[i, i+k+1)}(t', x) & \leq \sum_{z=\ell_{i+k, t'}}^{t'}f^\sss{(t,t')}_{[i, i+k)}(t', z)p(z, x). \nonumber
 \end{align}
 This bound does not hold with equality, because the first factor on the rhs counts the good self-avoiding paths from $t'$ to $z$,
 but the vertex $x$ is not necessarily excluded in these paths, while these paths are excluded on the lhs. Recall from \eqref{eq:pa-gamma} that $p(z,x)=\nu(x\wedge z)^{-\gamma}(x\vee z)^{\gamma-1}$.
 Since $f$ only counts the good paths, observe that if $z<x$, then $x>\ell_{i+k,t'}$. Thus, splitting the sum in two, whether $z\geq x$ or $z<x$, one obtains that
 \begin{align}
  f^\sss{(t,t')}_{[i, i+k+1)}(t', x)
   & \leq \nu x^{-\gamma}\sum_{z=\ell_{i+k, t'}\vee x}^{t'}f^\sss{(t,t')}_{[i, i+k)}(t', z) z^{\gamma-1} \label{eq:lower-bound-f-in-proof1} \\
   & \hspace{15pt}+ \ind{x>\ell_{i+k,t'}}\nu x^{\gamma-1}\sum_{z=\ell_{i+k,t'}}^{x-1}f^\sss{(t,t')}_{[i, i+k)}(t', z)z^{-\gamma}. \nonumber
 \end{align}
 By the induction hypothesis \eqref{eq:lower-paths-from-t}, we have that
 \begin{align*}
  f^\sss{(t,t')}_{[i, i+k+1)}(t', x)
   & \leq \nu x^{-\gamma}\phi_{[i, i+k)}\sum_{z=\ell_{i+k, t'}\vee x}^{t'}z^{-1} + \nu x^{-\gamma}\psi_{[i, i+k)}\sum_{z=\ell_{i+k-1, t'}\vee x}^{t'}z^{2\gamma - 2} \\
   & \hspace{-5pt} +
  \ind{x>\ell_{i+k,t'}}\nu x^{\gamma-1}\phi_{[i, i+k)}\sum_{z=\ell_{i+k,t'}}^{x-1}z^{-2\gamma} + \ind{x>\ell_{i+k,t'}}\nu x^{\gamma-1}\psi_{[i, i+k)}\sum_{z=\ell_{i+k-1,t'}}^{x-1}z^{-1},
 \end{align*}
 where the lower summation bounds in the second sum on both rows changed as a result of the indicator in \eqref{eq:lower-paths-from-t}.
 Approximating the sums by integrals obtains that there exists $c=c(\nu,\gamma)$ such that
 \begin{align*}
  f^\sss{(t,t')}_{[i, i+k+1)}(t', x)
   & \leq
  x^{-\gamma}c\left(\phi_{[i, i+k)}\log(t'/\ell_{i+k-1,t'}) + \psi_{[i,i+k)}t'^{2\gamma-1}\right)                                                   \\
   & \hspace{15pt}+ \ind{x>\ell_{i+k,t'}}x^{\gamma-1}c\left(\phi_{[i,i+k)}\ell_{i+k,t'}^{1-2\gamma} + \psi_{[i,i+k)}\log(t'/\ell_{i+k-1,t'})\right)
 \end{align*}
 and \eqref{eq:lower-paths-from-t} holds by the definitions \eqref{eq:phi} and \eqref{eq:psi}, as shifting the index of the terms in the logarithm is allowed because $k\mapsto \ell_{k,t'}$ is nonincreasing.
 The bound \eqref{eq:lower-paths-from-x} follows analogously. The first indicator follows since the sum on the rhs in the analogue of \eqref{eq:lower-bound-f-in-proof1} is equal to zero if $x=t'$, since $p(t',t')=0$ by definition in Proposition \ref{prop:pa-gamma}, i.e.,
 \[
  \sum_{z=\ell_{i+k,t'}\vee t'}^{t'}f_{[0,k)}^\sss{(t, t')}(q, z)p(z,t') = f_{[0,k)}^\sss{(t, t')}(q, t')p(t',t')= f_{[0,k)}^\sss{(t, t')}(q, t')\cdot 0=0.\qedhere
 \]
\end{proof}

\begin{lemma}[Upper bound on $t'/\ell_{k,t'}^\sss{(t)}$]\label{lemma:eta-k-appendix}
 There exists $B_{\ref{lemma:eta-k-appendix}}=B_{\ref{lemma:eta-k-appendix}}(\gamma, \nu)$
 such that for $B>B_{\ref{lemma:eta-k-appendix}}$,
 \begin{equation*}
  t'/\ell_{k,t'}^\sss{(t)} \leq \exp\Big(B\big(1\vee\log(t'/t)\big)\big(\tau-2\big)^{-k/2}\Big).
 \end{equation*}
\end{lemma}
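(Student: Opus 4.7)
The plan is to prove the bound by induction on $k$. The base case $k=0$ is immediate from \eqref{eq:ell-init}: since $\ell_{0,t'}^\sss{(t)}=\lceil\delta' t\rceil$, one has $\log(t'/\ell_{0,t'}^\sss{(t)}) \leq \log(t'/t) + \log(1/\delta') \leq (1+\log(1/\delta'))g$, where $g:=1\vee\log(t'/t)$, so any $B \geq 1+\log(1/\delta')$ works at $k=0$ since $(\tau-2)^{0}=1$.

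For the inductive step, assuming the bound for $0,1,\dots,k-1$, I would first derive an upper bound on $\alpha_{[0,k)}$ by iterating the coupled recursions \eqref{eq:alpha}--\eqref{eq:beta} in matrix form,
\begin{equation*}
\begin{pmatrix}\alpha_{[0,j)}\\\beta_{[0,j)}\end{pmatrix} \leq c\begin{pmatrix}\log\eta_{j-1,t'} & t'^{2\gamma-1} \\ \ell_{j-1,t'}^{1-2\gamma} & \log\eta_{j-1,t'}\end{pmatrix}\begin{pmatrix}\alpha_{[0,j-1)}\\\beta_{[0,j-1)}\end{pmatrix}.
\end{equation*}
The key identity is $t'^{2\gamma-1}\ell_{j-1,t'}^{1-2\gamma}=\eta_{j-1,t'}^{2\gamma-1}$, so the dominant eigenvalue of this matrix equals $c(\log\eta_{j-1,t'} + \eta_{j-1,t'}^{(2\gamma-1)/2})$. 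Plugging in the induction hypothesis $\log\eta_{j-1,t'} \leq Bg(\tau-2)^{-(j-1)/2}$ and iterating over $j=1,\dots,k$, the geometric series $\sum_{j=0}^{k-1}(\tau-2)^{-j/2} \leq (\tau-2)^{-(k-1)/2}/(1-\sqrt{\tau-2})$ produces a bound of the form
\begin{equation*}
\log\alpha_{[0,k)} \leq -(1-\gamma)\log(\delta' t) + C_1\,Bg\,(\tau-2)^{-k/2} + O(k),
\end{equation*}
with $C_1=C_1(\gamma)$.

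I would then convert this into a lower bound on $\ell_{k,t'}^\sss{(t)}$ via the maximality in \eqref{eq:ell}, which forces $\alpha_{[0,k)}(\ell_{k,t'}^\sss{(t)}+1)^{1-\gamma}>(k\log t')^{-3}$, so
\begin{equation*}
\log(t'/\ell_{k,t'}^\sss{(t)}) \leq \log t' + \frac{1}{1-\gamma}\bigl(\log\alpha_{[0,k)} + 3\log(k\log t')\bigr) + O(1).
\end{equation*}
Substituting the bound on $\log\alpha_{[0,k)}$, the $\log t'$ and $-\log(\delta' t)/(1-\gamma)$ contributions combine into $O(g)$, the $\log\log t'$ correction is of lower order, and the leading term becomes $C_2(\gamma,\nu)Bg(\tau-2)^{-k/2}$. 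Taking $B$ larger than a $(\gamma,\nu)$-dependent threshold $B_{\ref{lemma:eta-k-appendix}}$ that absorbs $C_2$ together with the correction terms closes the induction.

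The main obstacle is the matrix eigenvalue analysis that underpins the bound on $\alpha_{[0,k)}$: one has to exploit that the dominant per-iteration growth factor is $\eta_{j-1,t'}^{(2\gamma-1)/2}$, the geometric mean of the off-diagonal entries $t'^{2\gamma-1}$ and $\ell_{j-1,t'}^{1-2\gamma}$, rather than $\eta_{j-1,t'}$ itself. This square root is precisely what produces the $(\tau-2)^{-k/2}$ scaling (with the factor of $1/2$ in the exponent) in the conclusion. A related delicate point is verifying that the multiplicative constant $C_2$ coming from $1/(1-\sqrt{\tau-2})$ together with the additive corrections $O(g)$ and $O(\log\log t')$ can indeed be absorbed into a sufficiently large choice of $B_{\ref{lemma:eta-k-appendix}}$.
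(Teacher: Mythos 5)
Your route is genuinely different from the paper's. You bound $\log\alpha_{[0,k)}$ by iterating the coupled $(\alpha,\beta)$-recursion in matrix form, using the per-step eigenvalue $\log\eta_{j-1,t'} + \eta_{j-1,t'}^{(2\gamma-1)/2}$, and then pull this back to a bound on $\eta_{k,t'}$ through the maximality in \eqref{eq:ell}. The paper instead derives the two-step recursion \eqref{eq:eta_recursive_upper} directly on the $\eta$'s (exploiting that the $\log^3(t')$ factors coming from $(\vartriangle)$ cancel) and delegates its unrolling to \cite[Lemma A.5]{caravenna2016diameterArxiv}. Your approach is more self-contained, and your leading coefficient $C_2 = (1+\sqrt{\tau-2})/2 < 1$ is correct — one checks this via $1-\gamma=\gamma(\tau-2)$ and $2\gamma-1=\gamma(3-\tau)$, whence $\frac{3-\tau}{2(1-\sqrt{\tau-2})}=\frac{1+\sqrt{\tau-2}}{2}$.

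Two caveats. Minor: the eigenvalue bound applies to the conjugated matrix $D_j^{-1}M_jD_j$ with $D_j=\mathrm{diag}\big(1,(\ell_{j-1,t'}t')^{-(2\gamma-1)/2}\big)$, which depends on $j$, so the chain only telescopes because $D_j^{-1}D_{j-1}=\mathrm{diag}\big(1,(\ell_{j-1,t'}/\ell_{j-2,t'})^{(2\gamma-1)/2}\big)$ has entries $\leq 1$ by the monotonicity of $\ell$; this should be made explicit. Major: your claim that "the $\log\log t'$ correction is of lower order" is false for small $k$. Taking $k=1$ and $t'=t$ in \eqref{eq:ell} yields $t/\ell_{1,t}\asymp(\log t)^{3/(1-\gamma)}$, so $\log\eta_{1,t}\asymp\frac{3}{1-\gamma}\log\log t\to\infty$, which cannot be dominated by $B(\tau-2)^{-1/2}$ for any $B$ independent of $t$. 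More generally, absorption of the $\log\log t'$ term fails whenever $(\tau-2)^{-k/2}=o(\log\log t')$, i.e.\ for $k\lesssim\log\log\log t'$. To close your induction you must either restrict $k$ to the range in which the bound is actually used ($k\leq\underline{K}_{t,t'}$, near which $(\tau-2)^{-k/2}\gtrsim\log(t)/(1\vee\log(t'/t))\gg\log\log t'$) or carry an additive $O(\log\log t')$ term in the exponent. The same computation indicates that the lemma, read literally for all $k$, is imprecise once the $\log^{-3}(t')$ factor is introduced into \eqref{eq:ell}, so this is not a defect of your strategy alone — but your sketch does sweep it under the rug, and it needs to be addressed explicitly.
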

\begin{proof}
 Let $\gamma=1/(\tau-1)$ so that $1/(\tau-2)=\gamma/(1-\gamma)$. We omit the superscript $(t)$ of $\ell_{k,t'}^{(t)}$. We prove by induction. For the induction base $k=0$,
 $
  t'/\ell_{0,t'} \leq t'/(\delta't).
 $
 The advancement of the induction follows from \cite[Lemma A.5, after (A.29)]{caravenna2016diameterArxiv}, which contains the appendices of \cite{caravenna2016diameter}.
 Recall $\alpha_{[0,k)}$, $\beta_{[0,k)}$, $\ell_{k,t'}$ from
 \eqref{eq:alpha}, \eqref{eq:beta}, and \eqref{eq:ell}, respectively. Write
 $\eta_{k,t'}:=t'/\ell_{k,t'}$,  and let $c=c(\gamma,\nu)$ be the constant from Lemma \ref{lemma:number-of-paths}.
 To use the same calculations as \cite[Lemma A.5, after (A.29)]{caravenna2016diameterArxiv}, we need to show that there exists $C=C(\gamma,\nu)>1$, such that
 \begin{equation}
  \left(\eta_{k+2,t'}^{-1} + 1/t'\right)^{\gamma-1}
  \leq
  C\left(\eta_{k,t'}^\gamma + \eta_{k+1,t'}^{1-\gamma}\log(\eta_{k+1,t'})\right).
  \label{eq:eta_recursive_upper}
 \end{equation}
 We start bounding the lhs.
 Observe that $(\circledast)$ in \eqref{eq:ell} holds by definition of the $\argmax$ in the opposite direction when
 we replace $\ell_{k,t'}$ by $\ell_{k,t'}+1$, i.e.,
 \[
  \alpha_{[0,k)}(\ell_{k,t'}^\sss{(t)}+1)^{1-\gamma}\geq \big(k\log(t')\big)^{-3}.
 \]
 Combining this with \eqref{eq:alpha} yields
 \begin{align}
  \left(\frac{\ell_{k+2,t'}+1}{t'}\right)^{\gamma-1}
   & \leq \log^3(t')(k+2)^3\alpha_{[0,k+2)}t'^{1-\gamma} \nonumber                                                               \\
   & = c\log^3(t')(k+2)^3\alpha_{[0,k+1)}t'^{1-\gamma}\log(\eta_{k+1,t'}) + c\log^3(t')(k+2)^3\beta_{[0,k+1)}t'^\gamma \nonumber \\
   & =: T_1 + T_2.\label{eq:eta_k_recursive_t1_t2}
 \end{align}
 Substituting $(\circledast)$ from \eqref{eq:ell} in $T_1$ obtains
 \begin{align}
  T_1 & :=c\log^3(t')(k+2)^3\alpha_{[0,k+1)}t'^{1-\gamma}\log(\eta_{k+1,t'}) \nonumber         \\
      & \leq c(k+2)^3\frac{1}{(k+1)^3}\ell_{k+1,t'}^{\gamma-1}t'^{1-\gamma}\log(\eta_{k+1,t'})
  \leq c \left(\frac{k+2}{k+1}\right)^3\eta_{k+1,t'}^{1-\gamma}\log(\eta_{k+1,t'}).\label{eq:eta_k_recursive_t1}
 \end{align}
 Hence, $T_1$ is bounded by the second term on the rhs in \eqref{eq:eta_recursive_upper} for $C$ sufficiently large. For $T_2$, we substitute \eqref{eq:beta} and \eqref{eq:ell} to get
 \begin{align*}
  T_2:=c\log^3(t')(k+2)^3\beta_{[0,k+1)}t'^\gamma
   & =c^2\log^3(t')(k+2)^3\left(t'^\gamma\alpha_{[0,k)}\ell_{k,t'}^{1-2\gamma} + t'^\gamma\beta_{[0,k)}\log(\eta_{k,t'})\right) \\
   & \leq c^2 \left(\frac{k+2}{k}\right)^3 \eta_{k,t'}^\gamma + c^2\log^3(t')(k+2)^3t'^\gamma\beta_{[0,k)} \log(\eta_{k,t'})    \\
   & =: cT_{21} + cT_{22}.
 \end{align*}
 The term $cT_{21}$ can be captured by the first term on the rhs in \eqref{eq:eta_recursive_upper}. For $T_{22}$, by \eqref{eq:alpha}, $\beta_{[0,k)}t'^\gamma \leq c^{-1}\alpha_{[0,k+1)}t'^{1-\gamma}$.
 Using \eqref{eq:ell} and that $\eta_{k,t'}$ is nonincreasing, we further bound
 \begin{align}
  T_{22}
  \leq \log^3(t')(k+2)^3\log(\eta_{k,t'})\alpha_{[0,k+1)}t'^{1-\gamma}
   & \leq (k+2)^3\log(\eta_{k,t'})\frac{1}{(k+1)^3}\ell_{k+1,t'}^{\gamma-1}t'^{1-\gamma} \nonumber                 \\
   & \leq  \left(\frac{k+2}{k+1}\right)^3\log(\eta_{k+1,t'})\eta_{k+1,t'}^{1-\gamma}.\label{eq:eta_k_recursive_t2}
 \end{align}
 Thus, $cT_{22}$ can be captured by the second term on the rhs in \eqref{eq:eta_recursive_upper} for $C$ sufficiently large.
 The desired bound \eqref{eq:eta_recursive_upper} follows by combining \eqref{eq:eta_k_recursive_t1_t2}, \eqref{eq:eta_k_recursive_t1}, and \eqref{eq:eta_k_recursive_t2} by increasing the constant $C$ in \eqref{eq:eta_recursive_upper}. Now that we have established \eqref{eq:eta_recursive_upper}, the proof is finished by step-by-step following the computations in \cite[Lemma A.5, after (A.29)]{caravenna2016diameterArxiv}.
\end{proof}

\begin{lemma}[Upper bound on neighbourhood size]\label{lemma:lower-neighbourhoods}
 Consider the preferential attachment model under the same assumptions as Proposition \ref{proposition:lower-bound-weighted} and recall $\CE_{\mathrm{bad}}^{(q)}$ from \eqref{eq:lower-bad-def}.
 Let $q$ be a typical vertex in $\mathrm{PA}_t$. Then for $t'\geq t$
 \begin{align}
  \P\Bigg(\bigcup_{k=1}^{\underline{K}_{t,t'}+2}\Big\{|\partial\CB_{G}^{(t')}(q,k)| \geq \exp\big(2B(1\vee\log(t'/t))(\tau- & 2)^{-k/2}\big)\Big\}\mid \bigcap_{k=0}^{\underline{K}_{t,t'}+2}\neg\CE_{\mathrm{bad}}^{(q)}\Bigg)\nonumber \\
                                                                                                                            & \leq
  2\exp\big(-B(1\vee\log(t'/t))\big). \label{eq:lower-neighbourhoods}
 \end{align}
 \begin{proof}
  Define $\CE_{\mathrm{good}}^{(q)}:=\bigcap_{k=0}^{\underline{K}_{t,t'}}\neg\CE_{\mathrm{bad}}^{(q)}$. We will first bound
  $\E\big[|\partial\CB_{G}^{(t')}(q,k)| \mid \CE_{\mathrm{good}}^{(q)}\big]$
  and let the result follow by a union bound on the events in \eqref{eq:lower-neighbourhoods} and Markov's inequality.
  Conditionally on $\CE_{\mathrm{good}}^{(q)}$, all vertices at distance $k<\underline{K}_{t,t'}$ can only be reached via $t'$-good $q$-paths.
  Recall $f_{[0,k)}^\sss{(t,t')}(q,x)$ from \eqref{eq:lower-f} and its interpretation as an upper bound for the expected number of good paths from $q$ to $x$ of length $k$.
  Thus, we have by the law of total probability, and the definition of good paths in Definition \ref{def:t-bad},
  \begin{equation}
   \E\big[|\partial\CB_{G}^{(t')}(q,k)| \mid \CE_{\mathrm{good}}^{(q)}\big] \leq \frac{1}{\P\big(\CE_{\mathrm{good}}^{(q)}\big)}\sum_{x=\ell_{k,t'}}^{t'}f_{[0,k)}^\sss{(t,t')}(q,x).\nonumber
  \end{equation}
  Recalling \eqref{eq:lower-proof-t2}, we see that it is sufficient to bound the sum on the rhs.
  Now, applying the bound in \eqref{eq:lower-paths-from-x} on $f$ yields for some $c_1,c_2>0$,
  \begin{align}
   \sum_{x=\ell_{k,t'}}^{t'}f_{[0,k)}^\sss{(t,t')}(q,x)
    & \leq
   \alpha_{[0,k)} \sum_{x=\ell_{k,t'}}^{t'}x^{-\gamma} + \beta_{[0,k)}\sum_{x=\ell_{k-1,t'}}^{t'}x^{\gamma-1}\nonumber \\
    & \leq
   c_1\left(\alpha_{[0,k)}t'^{1-\gamma} + \beta_{[0,k)}t'^{\gamma}\right)
   \leq c_2\alpha_{[0,k+1)}t'^{1-\gamma}. \label{eq:neighbourhoods-inproof}
  \end{align}
  The last line follows since by \eqref{eq:alpha}, $t'^{\gamma}\beta_{[0,k)}\leq \alpha_{[0,k+1)}t'^{\gamma-1}/c$, and $k\mapsto \alpha_{[0,k)}$ is non-decreasing.
  We bound the rhs in \eqref{eq:neighbourhoods-inproof} in terms of $(t'/\ell_{k,t'})$. By $(\circledast)$ in \eqref{eq:ell} and Lemma \ref{lemma:eta-k-appendix}
  \begin{align*}
   \alpha_{[0,k+1)}t'^{1-\gamma}
    & \leq
   ((k+1)\log(t'))^{-3}(t'/\ell_{k+1,t'})^{1-\gamma} \\
    & \leq
   ((k+1)\log(t'))^{-3}\exp\Big(B(1-\gamma)\big(1\vee \log(t'/t)\big)(\tau-2)^{-(k+1)/2}\Big).
  \end{align*}
  This obtains that for $B$ sufficiently large
  \begin{align*}
   \E\big[|\partial\CB_{G}^{(t')}(q,k)| \mid \CE_{\mathrm{good}}^{(q)}\big]
    & \leq
   \frac{2c}{\P\big(\CE_{\mathrm{good}}^{(q)}\big)}\exp\Big(B(1\vee\log(t'/t))(\tau-2)^{-k/2}\Big)\nonumber \\
    & \leq \exp\Big(B'(1\vee\log(t'/t))(\tau-2)^{-k/2}\Big),
  \end{align*}
  where the last bound follows for some $B'>B$ as $\P\big(\CE_{\mathrm{good}}^{(q)}\big)=1-\delta'+o(1)$ by \eqref{eq:lower-proof-t2}.
  The assertion \eqref{eq:lower-neighbourhoods} follows by a union bound over \eqref{eq:lower-neighbourhoods} and summing over $k$.
 \end{proof}
\end{lemma}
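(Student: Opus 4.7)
\textbf{Proof plan for Lemma \ref{lemma:lower-neighbourhoods}.}
The plan is to bound the expected size of the neighbourhood boundary $|\partial\CB_G^{(t')}(q,k)|$ conditional on the no-bad-path event, and then transfer the bound into a tail probability via Markov's inequality, summed via a union bound over $k$. The key observation is that on the event $\CE_{\mathrm{good}}^{(q)} := \bigcap_{k=0}^{\underline{K}_{t,t'}+2}\neg\CE_{\mathrm{bad}}^{(q)}(k,t')$, every vertex $x$ at graph distance exactly $k$ from $q$ in $\mathrm{PA}_{t'}$ must be reached by at least one $t'$-good $q$-path of length $k$, and in particular must satisfy $x\geq \ell_{k,t'}^{\sss{(t)}}$. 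Hence
\[
  |\partial\CB_G^{(t')}(q,k)|\;\leq\;\#\{t'\text{-good }q\text{-paths of length }k \text{ ending in }x\geq \ell_{k,t'}^{\sss{(t)}}\},
\]
whose expectation under the unconditional law is upper-bounded by $\sum_{x=\ell_{k,t'}^{\sss{(t)}}}^{t'} f^{\sss{(t,t')}}_{[0,k)}(q,x)$ by the very definition of $f$ in \eqref{eq:lower-f} together with Proposition \ref{prop:pa-gamma}.

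The next step is to plug in Lemma \ref{lemma:number-of-paths}: the bound \eqref{eq:lower-paths-from-x} splits the sum into two parts, one handled by $\alpha_{[0,k)}\sum_{x\ge \ell_{k,t'}^{\sss{(t)}}} x^{-\gamma}\leq c_1\alpha_{[0,k)}t'^{1-\gamma}$ and one by $\beta_{[0,k)}\sum_{x\ge \ell_{k-1,t'}^{\sss{(t)}}} x^{\gamma-1}\leq c_1\beta_{[0,k)}t'^{\gamma}$. Using the recursion \eqref{eq:alpha} that gives $c\beta_{[0,k)}t'^{\gamma}\leq \alpha_{[0,k+1)}t'^{1-\gamma}$, and the fact that $k\mapsto\alpha_{[0,k)}$ is non-decreasing, both pieces consolidate into a single bound of the form $c_2\alpha_{[0,k+1)}t'^{1-\gamma}$. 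Applying the defining inequality $(\vartriangle)$ in \eqref{eq:ell} in its reversed direction yields $\alpha_{[0,k+1)}\leq (k+1)^{-3}\log^{-3}(t')\,\ell_{k+1,t'}^{\gamma-1}$, whence $\alpha_{[0,k+1)}t'^{1-\gamma}\leq (k+1)^{-3}\log^{-3}(t')(t'/\ell_{k+1,t'})^{1-\gamma}$. Finally, Lemma \ref{lemma:eta-k-appendix} bounds $t'/\ell_{k+1,t'}^{\sss{(t)}}$ by $\exp(B(1\vee\log(t'/t))(\tau-2)^{-(k+1)/2})$, giving the exponential scaling in the lemma's statement.

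To convert expectations into the desired tail estimate, I would divide by $\P(\CE_{\mathrm{good}}^{(q)})$, which by the work leading to \eqref{eq:lower-proof-t2} is $1-\delta'+o(1)$ and therefore harmless up to a constant; then apply Markov's inequality with the threshold $\exp(2B(1\vee\log(t'/t))(\tau-2)^{-k/2})$. The ratio of the bound on the expectation to this threshold is of order
\[
  (k+1)^{-3}\log^{-3}(t')\exp\bigl(-(B'-(1-\gamma)B)(1\vee\log(t'/t))(\tau-2)^{-(k+1)/2}\bigr),
\]
where the exponent strictly increases in $k$. Summing this geometrically decaying series over $k\in\{1,\dots,\underline{K}_{t,t'}+2\}$ via a union bound, and choosing $B$ sufficiently large so that the coefficient in the exponent dominates, produces a bound of the form $2\exp(-B(1\vee\log(t'/t)))$, as claimed.

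\textbf{Main obstacle.} The delicate part is propagating the sharp decay simultaneously in $k$ and in $(1\vee\log(t'/t))$: one needs Lemma \ref{lemma:eta-k-appendix} to give an exponent of exactly $(\tau-2)^{-(k+1)/2}$ (not a worse rate), and one has to be careful that the two sums arising from the two terms in \eqref{eq:lower-paths-from-x} do not contribute mismatched growth. Consolidating them via the recursion $c\beta_{[0,k)}t'^{\gamma}\leq \alpha_{[0,k+1)}t'^{1-\gamma}$ is what unifies everything under a single $\alpha_{[0,k+1)}$ bound, and this step is essential; without it, the $\beta$ contribution carries a factor $t'^{\gamma}$ that would ruin the summability over $k$.
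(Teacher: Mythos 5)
Your proposal follows essentially the same route as the paper's proof: bound $\E[|\partial\CB_G^{(t')}(q,k)| \mid \CE_{\mathrm{good}}^{(q)}]$ via the observation that conditionally on no bad path, every vertex at distance $k$ is reached by a $t'$-good $q$-path, so the unconditional expected path count $\sum_{x\geq \ell_{k,t'}^{\sss{(t)}}} f^{\sss{(t,t')}}_{[0,k)}(q,x)$ divided by $\P(\CE_{\mathrm{good}}^{(q)})=1-\delta'+o(1)$ bounds the conditional expectation; apply \eqref{eq:lower-paths-from-x}, consolidate the $\beta$-term into a single $\alpha_{[0,k+1)}t'^{1-\gamma}$ bound using the recursion \eqref{eq:alpha}, then use $(\vartriangle)$ and Lemma \ref{lemma:eta-k-appendix} to get the doubly-exponential scaling, and finish with Markov plus a union bound over $k$. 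Your consolidation step correctly reads $c\beta_{[0,k)}t'^{\gamma}\leq\alpha_{[0,k+1)}t'^{1-\gamma}$ (the paper has a typographical $t'^{\gamma-1}$ where $t'^{1-\gamma}$ is meant), and your definition of $\CE_{\mathrm{good}}^{(q)}$ with upper index $\underline{K}_{t,t'}+2$ matches the conditioning event in the lemma statement more faithfully than the paper's own proof, which truncates at $\underline{K}_{t,t'}$.
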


\subsection{Upper bound}
This lemma is establishing the probability for a vertex to be a two-connector, and is cited from \cite{dommers2010diameters}.
\begin{lemma}[{\cite[(3.6) in proof of Proposition 3.2]{dommers2010diameters}}]\label{lemma:upper-p-connector}
 For $x\in [\hat{t}_i]$, a set $\CV\in[\hat{t}_i]$, conditionally on $\mathrm{PA}_{\hat{t}_i}$, the probability that
 $y\in(\hat{t}_i, t_i]$ is a connector of $(x, \CV)$ is at least
 \begin{equation}
  \frac{\eta D_x(\hat{t}_i)D_{\CV}(\hat{t}_i)}{\hat{t}_i^2} =: p(x, \CV),\nonumber
 \end{equation}
 where $\eta>0$ is a constant and $D_{\CV}(\hat{t}_i):=\sum_{z\in\CV}D_{z}(\hat{t}_i)$. Moreover, w/p at least $p(x,\CV)$,
 the event $\{y \text{ is a connector of } (x,\CV)\}$ happens independently of other vertices in $(\hat{t}_i,t_i]$.
\end{lemma}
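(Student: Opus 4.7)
The plan is to argue that the event $\{y \text{ is a connector of }(x,\CV)\}$ decomposes as the intersection of $\{y\to x\}$ and $\{y\to z \text{ for some }z\in\CV\}$, and then bound each factor using the connection rule of the model together with monotonicity of the indegree process. Throughout, I would exploit that $y\in(\hat t_i,t_i]$ means $y$ is of order $\hat t_i$, since $t_i=\hat t_i/(1-\delta')$.

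First I would treat the VPA case, which is cleanest because edges from $y$ to distinct existing vertices are conditionally independent by Definition \ref{def:vpa}. Conditioning on $\mathrm{PA}_{y-1}$, one has $\P(y\to x\mid\mathrm{PA}_{y-1})=f(D_x^{\leftarrow}(y-1))/y$ and $\P(y\to z\mid\mathrm{PA}_{y-1})=f(D_z^{\leftarrow}(y-1))/y$ with independence over $z$. Using $D_w^{\leftarrow}(y-1)\ge D_w^{\leftarrow}(\hat t_i)$ (indegrees only increase), the lower bound $f(k)\ge \gamma k$ for the affine attachment rule, and $y\le t_i=\hat t_i/(1-\delta')$, I get
\[
\P(y\to x\mid\mathrm{PA}_{\hat t_i})\ge (1-\delta')\gamma\, D_x^{\leftarrow}(\hat t_i)/\hat t_i.
\]
A Bonferroni-type bound over $z\in\CV$ gives
\[
\P(\exists z\in\CV:y\to z\mid\mathrm{PA}_{\hat t_i})\ge (1-\delta')\gamma\, D_\CV^{\leftarrow}(\hat t_i)/\hat t_i-O\bigl((D_\CV^{\leftarrow}(\hat t_i)/\hat t_i)^2\bigr).
\]
Multiplying by the first bound using conditional independence (after also conditioning on the edge landing on $x$, which only changes one summand) yields a lower bound of the form $\eta D_x(\hat t_i)D_\CV(\hat t_i)/\hat t_i^2$ for some $\eta>0$, absorbing the lower-order correction.

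For FPA I would instead reveal the edges of $y$ sequentially. Using \eqref{eq:fpa} and the same monotonicity $D_w^{\leftarrow}(y,j-1)\ge D_w^{\leftarrow}(\hat t_i)$, the probability that the first edge lands on $x$ is at least $\eta' D_x(\hat t_i)/\hat t_i$ and, conditionally on that, the probability that the second edge lands on some vertex in $\CV$ is at least $\eta' D_\CV(\hat t_i)/\hat t_i$, up to the same Bonferroni correction. (If $m=1$ there is no second edge and the statement needs $x\in\CV$ or an alternative reading; otherwise $m\ge 2$ suffices.) The denominators in \eqref{eq:fpa} are of order $\hat t_i$, and $j-1\le m-1$ only changes normalising constants, so the joint probability is again of order $D_x(\hat t_i)D_\CV(\hat t_i)/\hat t_i^2$.

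The independence claim across $y\in(\hat t_i,t_i]$ follows because, conditionally on $\mathrm{PA}_{\hat t_i}$, the lower bounds above depend only on the edge-choices of vertex $y$, which are independent across $y$ after we thin each Bernoulli event to one that is measurable w.r.t. the coin flips used by that single arriving vertex. The main obstacle is making the Bonferroni correction on $\{y\to\CV\}$ clean and uniform: it is negligible only when $D_\CV(\hat t_i)/\hat t_i$ is $o(1)$, which will be the case in the application (the set $\CV=\CL_{k+1}^\sss{(i)}$ has total indegree much less than $\hat t_i$ by Lemma \ref{lemma:upper-impact}), but the constant $\eta$ has to be chosen small enough to absorb both the Bonferroni loss and the $(1-\delta')$ factor uniformly in $\delta'$ bounded away from $1$.
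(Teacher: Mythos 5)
The paper does not prove this lemma at all; it is quoted verbatim from \cite[(3.6), proof of Proposition~3.2]{dommers2010diameters}, so there is no ``paper's own proof'' to compare against. Your sketch reproduces the standard argument used there, and the overall shape is correct: condition on $\mathrm{PA}_{\hat t_i}$, use monotonicity of indegrees so that $D_w^{\leftarrow}(y-1)\ge D_w^{\leftarrow}(\hat t_i)$ and $y\le t_i=\hat t_i/(1-\delta')$, and then thin the connector indicator for each $y$ down to a Bernoulli that is measurable with respect to the coin flips of $y$ alone, giving the conditional independence across $y\in(\hat t_i,t_i]$.

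Two small corrections and one confirmation are worth recording. First, for FPA you do not need any Bonferroni-type correction at all: revealing the edges of $y$ sequentially, the events $\{$second edge lands on $z\}$ for distinct $z\in\CV$ are \emph{disjoint}, so $\P(\text{second edge lands in }\CV\mid\cdot)=\sum_{z\in\CV}\P(\text{second edge lands on }z\mid\cdot)$ exactly, and the product over the first and second edge gives the bound directly. Second, for VPA the inclusion--exclusion loss you worry about is real in principle but is handled more cleanly by $1-\prod_{z\in\CV}(1-p_z)\ge 1-\mathrm{e}^{-\sum_z p_z}\ge c\min\{1,\sum_z p_z\}$ for a universal $c>0$; since $\sum_z p_z$ is of order $D_\CV(\hat t_i)/\hat t_i$, the bound $\eta D_x D_\CV/\hat t_i^2$ follows with $\eta$ chosen to absorb this constant, and in the application (with $\CV=\CL_{k+1}^{\sss{(i)}}$) $D_\CV(\hat t_i)/\hat t_i$ is uniformly bounded so no further assumption is needed. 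Third, your caveat about $m=1$ for FPA is correct: a vertex $y$ emitting a single edge cannot be a two-connector, so the lemma (and indeed the entire $\log\log$-distance phenomenology for FPA) implicitly requires $m\ge 2$; this is consistent with the reference and should be stated, but it is not a gap in the argument itself.
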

Recall $\hat{t}_i$ from \eqref{eq:core}, $s_k^\sss{(i)}$ from \eqref{eq:sk}, and $\CL_{k}^\sss{(i)}$ from \eqref{eq:layers}. The last lemma bounds the total degree of vertices with degree at least $s_k^\sss{(i)}$ at time $\hat{t}_i$.
\begin{lemma}[Impact of high degree vertices {\cite[Lemma A.1]{dommers2010diameters}}]\label{lemma:upper-impact}
 There is a constant $c>0$ such that for any $\vareps>0$
 \[
  \P\big(D_{\CL_{k}^\sss{(i)}}(\hat{t}_i) \leq c\hat{t}_i (s_{k}^\sss{(i)})^{2-\tau} \big) = o(1).
 \]
 \begin{proof}
  The proof for FPA can be found in \cite[Lemma A.1]{dommers2010diameters}, we refer the reader there to fill in the details.
  For VPA it follows from \cite[Theorem 1.1(a)]{dereich2009random}.
 \end{proof}
\end{lemma}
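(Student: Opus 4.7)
The plan is to reduce the bound on $D_{\CL_k^\sss{(i)}}(\hat{t}_i)$ to a lower tail estimate for the number of high-degree vertices. Since every vertex in $\CL_k^\sss{(i)}$ has indegree at least $s_k^\sss{(i)}$ at time $\hat{t}_i$, we have the trivial but sharp bound
\[
D_{\CL_k^\sss{(i)}}(\hat{t}_i) \;\geq\; \big|\CL_k^\sss{(i)}\big|\cdot s_k^\sss{(i)},
\]
so it suffices to show that $|\CL_k^\sss{(i)}|\geq c'\hat{t}_i (s_k^\sss{(i)})^{1-\tau}$ with probability $1-o(1)$. Both models have an asymptotic power-law degree distribution with exponent $\tau$, hence a typical vertex of birth time $v$ has indegree of order $(\hat{t}_i/v)^{1/(\tau-1)}$ at time $\hat{t}_i$, and the number of vertices exceeding threshold $s$ is expected to be of order $\hat{t}_i s^{1-\tau}$.

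The first step is to compute the expectation $\E[|\CL_k^\sss{(i)}|]$. For $\mathrm{FPA}$ the probability that a fixed vertex born at time $v\leq \hat{t}_i$ has indegree at least $s$ at time $\hat{t}_i$ can be read off from the Pólya-urn representation of the process (equivalently, from the same martingale $Z_{v,\hat{t}_i}^{(k)}$ introduced in the proof of Lemma \ref{lemma:degree-lowerbound}, applied with a positive value of $k$). Summing over $v\in[\hat{t}_i]$ and noting that $s_k^\sss{(i)}$ stays below $\hat{t}_i^{1/(2(\tau-1))}$ by \eqref{eq:sk}, the first moment $\E[|\CL_k^\sss{(i)}|]$ is at least $2c'\hat{t}_i(s_k^\sss{(i)})^{1-\tau}$ for some $c'>0$. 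For $\mathrm{VPA}$ the same conclusion follows directly from \cite[Theorem 1.1(a)]{dereich2009random}, which yields the limiting empirical tail $\hat{t}_i^{-1}|\CL_k^\sss{(i)}|\to C(s_k^\sss{(i)})^{1-\tau}(1+o(1))$ in probability along any sequence with $s_k^\sss{(i)}$ allowed to grow polynomially.

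The second step is concentration around the first moment. The cleanest route is the Doob martingale for $|\CL_k^\sss{(i)}|$ with respect to the filtration generated by the sequential arrival of vertices and their edges: a single arrival affects only finitely many indegree values (by $m$ in $\mathrm{FPA}$, or by an almost surely finite random number with exponential tails in $\mathrm{VPA}$), so Azuma–Hoeffding gives deviations on the scale $O(\sqrt{\hat{t}_i}\,\mathrm{polylog})$, which is much smaller than the first moment $\hat{t}_i(s_k^\sss{(i)})^{1-\tau}$ for $s_k^\sss{(i)}\leq \hat{t}_i^{1/(2(\tau-1))}\log^{-1/2}(\hat{t}_i)$ as imposed by \eqref{eq:sk}. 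Alternatively, one can estimate the variance directly via a pairwise degree calculation as done in \cite[Lemma A.1]{dommers2010diameters} and close the argument with Chebyshev's inequality.

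The main obstacle is that $s_k^\sss{(i)}$ can grow to be of polynomial order in $\hat{t}_i$, so the deviation probabilities must be controlled uniformly over the entire allowed range of $s_k^\sss{(i)}$. The buffer $\log^{-1/2}(\hat{t}_i)$ built into the maximum of the degree threshold sequence in \eqref{eq:sk} is what separates $s_k^\sss{(i)}$ from the maximal degree in the graph and thereby guarantees that enough vertices of this degree exist; this is precisely where the model-specific estimates of \cite[Lemma A.1]{dommers2010diameters} and \cite[Theorem 1.1(a)]{dereich2009random} are invoked, as stated in the excerpted proof.
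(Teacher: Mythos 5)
Your reduction $D_{\CL_k^\sss{(i)}}(\hat{t}_i)\geq |\CL_k^\sss{(i)}|\, s_k^\sss{(i)}$ is valid, and the first-moment asymptotics $\E[|\CL_k^\sss{(i)}|]\asymp \hat{t}_i (s_k^\sss{(i)})^{1-\tau}$ is the correct order. The paper's own proof is simply a citation of \cite[Lemma A.1]{dommers2010diameters} for FPA and \cite[Theorem 1.1(a)]{dereich2009random} for VPA, which is the deferred route you mention at the end.

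The gap is in your primary concentration argument. You propose Azuma--Hoeffding for the Doob martingale in the sequential filtration, asserting bounded increments of order $m$ because each arrival touches only $m$ indegrees. That is not what bounds a Doob martingale increment: $|M_j-M_{j-1}|$ is the change in $\E[|\CL_k^\sss{(i)}|\mid\CF_j]$ when the step-$j$ randomness is revealed, and since the attachment rule is adaptive, a different edge at step $j$ shifts all subsequent attachment probabilities and hence the conditional law of every later degree. Showing the increments are nonetheless $O(1)$ requires a coupling argument that you do not supply, and this is not a cosmetic issue: at the top of the allowed range $s_k^\sss{(i)}\leq \hat{t}_i^{1/(2(\tau-1))}\log^{-1/2}(\hat{t}_i)$ the first moment $\hat{t}_i(s_k^\sss{(i)})^{1-\tau}$ exceeds $\sqrt{\hat{t}_i}$ by only a polylogarithmic factor, so any loss in the concentration scale from cascading is fatal. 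A genuine bounded-differences argument would need the P\'olya-urn (Beta) representation of FPA, under which the degree sequence becomes a function of independent random variables and McDiarmid applies; absent that, the second-moment/Chebyshev computation of \cite[Lemma A.1]{dommers2010diameters}, which you list only as an alternative, is the correct route and is in fact what the cited lemma does.
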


\begin{claim}\label{claim:upper-bound-transforms}
  Recall $K_{t,t_i}$ from \eqref{eq:kt}, $\bm{I}_2(L)$ from \eqref{eq:exp-crit}, and $Q_{t,t_i}$ from \eqref{eq:qt-kt}. Let $L_1$ and $L_2$ be two independent copies of the random variable $L$.
  For all $c_5, \varepsilon, M>0$, there exists $M_L>0$ such that
   \begin{equation}
     \sum_{k=0}^{K_{t,t_i}/2 + M-1} F_{L_1 + L_2}^{(-1)}\Big(\exp\big(-2c_5(k+1)^{-2}(\tau-2)^{-(i+k)}\big)\Big)
     \le
     (1+\varepsilon\ind{\bm{I}_2(L)=\infty})Q_{t,t_i} + M_L.\label{eq:claim-upper}
   \end{equation}
 \begin{proof}
   We proceed along the same lines as in \cite[Proof of Proposition 3.4]{jorkom2019weighted}. To shorten notation, we define
   \begin{align}
     \widetilde Q_{t,t_i}:=\sum_{k=0}^{K_{t,t_i}/2 + M-1} F_{L_1 + L_2}^{(-1)}\Big(\exp\big(-2c_5(k+1)^{-2}(\tau-2)^{-(i+k)}\big)\Big)\nonumber
   \end{align}
   First, we relate the inverse  $F_{L_1+L_2}^{(-1)}(\cdot)$ to $F_L^{(-1)}(\cdot)$ by observing that for $x>0$
   \be
   F_{L_1+L_2}(x)=\P(L_1 + L_2\le x)\ge \P(\max\{L_1,L_2\}\le x/2) = \big(F_L(x/2)\big)^2.\nonumber
   \ee
   Hence, for any $z>0$, it holds that $F_{L_1+L_2}(z)\le 2F_L^{(-1)}(\sqrt{z})$, so that for some $M_1>0$
   \begin{align}
    \widetilde Q_{t,t_i}
    &\le
    2 \sum_{k=0}^{K_{t,t_i}/2 + M-1}  F_{L}^{(-1)}\Big(\exp\big(-c_5(k+1)^{-2}(\tau-2)^{-(i+k)}\big)\Big)\nonumber\\
    &\le
    M_1 + 2 \sum_{k=0}^{K_{t,t_i}/2}  F_{L}^{(-1)}\Big(\exp\big(-c_5(k+1)^{-2}(\tau-2)^{-(i+k)}\big)\Big),\nonumber
   \end{align}
   since $z\mapsto F_L^{(-1)}(1/z)$ is nonincreasing and bounded.
  Define $b:=\inf\{x: F_L^{(-1)}(x)>0\}$ and $L':=L-b$, so that by $(\star)$ in \eqref{eq:integral-sum-tricks}
  \begin{align}
   \widetilde Q_{t,t_i}
    & \le
   M_1 + bK_{t,t_i} + 2\int_{x=0}^{K_{t,t_i}/2}  F_{L'}^{(-1)}\big(\exp\big(-c_5 (x+1)^{-2}(\tau-2)^{-i-x}\big)\big)\rd x\nonumber\\
   &\le M_2 + bK_{t,t_i} + 2\int_{x=x_0}^{K_{t,t_i}/2}  F_{L'}^{(-1)}\big(\exp\big(-c_5 (x+1)^{-2}(\tau-2)^{-i-x}\big)\big)\rd x\label{eq:claim-upper1}
  \end{align}
  for some large constants $x_0,M_2>0$. Apply the change of variables
  \[
   (k+1)^{-2}(\tau-2)^{-x} = (\tau-2)^{-y/2} \qquad
   \Leftrightarrow
   \qquad
   y = 2x - 4 \log(x+1)/|\log(\tau-2)|.
  \]
  Differentiating both sides, rearranging terms, and using $y=\Theta(x)$   yields a bound for $\rd x$, i.e.,
  \[
  \Big(1 - \frac{2/|\log(\tau-2)|}{x+1}\Big)\rd x = \frac{1}{2}\rd y
  \qquad \Rightarrow \qquad
  \rd x  \le \frac{1}{2}\bigg(1+\frac{C}{y+1}\bigg)\rd y,
  \]
  for some constant $C>0$ and $x\ge x_0$ sufficiently large.
  Continuing to bound \eqref{eq:claim-upper1} from above, we obtain if $x_0$ is sufficiently large
  \begin{align}
    \widetilde Q_{t,t_i}
     & \le
    M_2 + bK_{t,t_i} + \int_{y=2x_0-4\frac{\log(x_0+1)}{|\log(\tau-2)|}}^{K_{t,t_i} - 4\frac{\log(1+K_{t,t_i})}{|\log(\tau-2)|}} \bigg(1+\frac{C}{y+1}\bigg) F_{L'}^{(-1)}\big(\exp\big(-c_5 (\tau-2)^{-i-y/2}\big)\big)\rd y\nonumber\\
    &\le
    M_2 + bK_{t,t_i} + \int_{y=x_0}^{K_{t,t_i}} \bigg(1+\frac{C}{y+1}\bigg) F_{L'}^{(-1)}\big(\exp\big(-c_5 (\tau-2)^{-i-y/2}\big)\big)\rd y.\label{eq:claim-upper2}
  \end{align}
  Recall $\bm{I}_2(L')$ from \eqref{eq:exp-crit}.
  We first assume $\bm{I}_2(L')<\infty$. In this case, there exists $M_3>0$ such that
  \begin{align}
   \int_{y=x_0}^{\infty}\frac{C}{y+1}  F_{L'}^{(-1)}\big(\exp\big(-(\tau-2)^{-y/2}\big)\big)\rd y < M_3.\nonumber
  \end{align}
  Using that the integrand in \eqref{eq:claim-upper2} is bounded, we obtain for some $M_4>0$
  \begin{align}
    \widetilde Q_{t,t_i}
     & \le
    M_2 + M_3 + bK_{t,t_i} + \int_{y=x_0}^{K_{t,t_i} } F_{L'}^{(-1)}\big(\exp\big(-c_5 (\tau-2)^{-i-y/2}\big)\big)\rd y\nonumber\\
    &\le M_4 + bK_{t,t_i} + \int_{y=0}^{K_{t,t_i}} F_{L'}^{(-1)}\big(\exp\big(-c_5 (\tau-2)^{-i-y/2}\big)\big)\rd y.   \label{eq:claim-upper3}
  \end{align}
  Since $L'=L-b$ by definition, and using that the integration interval has length $K_{t,t_i}$, we obtain
  \begin{align*}
    \widetilde Q_{t,t_i}
    &\le M_4 + \int_{y=0}^{K_{t,t_i}} F_{L}^{(-1)}\big(\exp\big(-c_5 (\tau-2)^{-(y+2i)/2}\big)\big)\rd y \\
    &= M_4 + \int_{y=2i}^{K_{t,t_i}+2i} F_{L}^{(-1)}\big(\exp\big(-c_5 (\tau-2)^{-y/2}\big)\big)\rd y
  \end{align*}
  by shifting the integration boundaries. Recall $K_{t,t}-K_{t,t_i}=2i$ by \eqref{eq:kti}, yielding
  \begin{align*}
    \widetilde Q_{t,t_i}
    &\le
    M_4 + \int_{y=K_{t,t}-K_{t,t_i}}^{K_{t,t}} F_{L}^{(-1)}\big(\exp\big(-c_5 (\tau-2)^{-y/2}\big)\big)\rd y.
  \end{align*}
  We leave it to the reader to verify using another change of variables and $(\ast)$ in \eqref{eq:integral-sum-tricks} that, similarly to the proof for the lower bound after \eqref{eq:integral-sum-tricks}, there exists $M_5$ such that
  \begin{align*}
    \int_{y=K_{t,t}-K_{t,t_i}}^{K_{t,t}} &F_{L}^{(-1)}\big(\exp\big(-c_5 (\tau-2)^{-y/2}\big)\big)\rd y \\
    &-
    \sum_{k=K_{t,t}-K_{t,t_i}+1}^{K_{t,t}} F_{L}^{(-1)}\big(\exp\big(-(\tau-2)^{-k/2}\big)\big)\rd y
    \, \le\, M_5.
  \end{align*}
  This establishes \eqref{eq:claim-upper} when $\bm{I}_2(L)<\infty$. If $\bm{I}_2(L)=\infty$, we observe that there exists $M_6$ such that
  \[
    \int_{y=x_0}^{K_{t,t_i}}\frac{C}{y+1}  F_{L'}^{(-1)}\big(\exp\big(-c_5(\tau-2)^{-y/2}\big)\big)\rd y
    <
    M_6 + \varepsilon \int_{y=x_0}^{K_{t,t_i}}F_{L'}^{(-1)}\big(\exp\big(-c_5(\tau-2)^{-y/2}\big)\big)\rd y.
  \]
  We use this bound in \eqref{eq:claim-upper2}, bound $b \le (1+\varepsilon)b$, and follow the same steps as from \eqref{eq:claim-upper3} onwards, carrying a factor $(1+\varepsilon)$ for the integrals.
 \end{proof}
\end{claim}

\bibliographystyle{imsart}

\begin{thebibliography}{46}

\bibitem{adriaans2017weighted}
\begin{barticle}[author]
\bauthor{\bsnm{Adriaans},~\bfnm{Erwin}\binits{E.}} \AND
  \bauthor{\bsnm{Komj{\'{a}}thy},~\bfnm{J{\'{u}}lia}\binits{J.}}
(\byear{2018}).
\btitle{{Weighted Distances in Scale-Free Configuration Models}}.
\bjournal{Journal of Statistical Physics}
\bvolume{173}
\bpages{1082--1109}.
\bdoi{10.1007/s10955-018-1957-5}
\end{barticle}
\endbibitem

\bibitem{aiello2008spatial}
\begin{barticle}[author]
\bauthor{\bsnm{Aiello},~\bfnm{William}\binits{W.}},
  \bauthor{\bsnm{Bonato},~\bfnm{Anthony}\binits{A.}},
  \bauthor{\bsnm{Cooper},~\bfnm{Colin}\binits{C.}},
  \bauthor{\bsnm{Janssen},~\bfnm{Jeanette}\binits{J.}} \AND
  \bauthor{\bsnm{Pra{\l}at},~\bfnm{Pawe{\l}}\binits{P.}}
(\byear{2008}).
\btitle{A spatial web graph model with local influence regions}.
\bjournal{Internet Mathematics}
\bvolume{5}
\bpages{175--196}.
\end{barticle}
\endbibitem

\bibitem{alves2019preferentialedgestep}
\begin{barticle}[author]
\bauthor{\bsnm{Alves},~\bfnm{Caio}\binits{C.}},
  \bauthor{\bsnm{Ribeiro},~\bfnm{Rodrigo}\binits{R.}} \AND
  \bauthor{\bsnm{Sanchis},~\bfnm{Remy}\binits{R.}}
(\byear{2019}).
\btitle{Preferential Attachment Random Graphs with Edge-Step Functions}.
\bjournal{Journal of Theoretical Probability}
\bpages{1--39}.
\end{barticle}
\endbibitem

\bibitem{amin2018prefBootstrap}
\begin{barticle}[author]
\bauthor{\bsnm{Amin~Abdullah},~\bfnm{Mohammed}\binits{M.}} \AND
  \bauthor{\bsnm{Fountoulakis},~\bfnm{Nikolaos}\binits{N.}}
(\byear{2018}).
\btitle{A phase transition in the evolution of bootstrap percolation processes
  on preferential attachment graphs}.
\bjournal{Random Structures \& Algorithms}
\bvolume{52}
\bpages{379--418}.
\end{barticle}
\endbibitem

\bibitem{auffinger201750}
\begin{bbook}[author]
\bauthor{\bsnm{Auffinger},~\bfnm{Antonio}\binits{A.}},
  \bauthor{\bsnm{Damron},~\bfnm{Michael}\binits{M.}} \AND
  \bauthor{\bsnm{Hanson},~\bfnm{Jack}\binits{J.}}
(\byear{2017}).
\btitle{50 years of first-passage percolation}
\bvolume{68}.
\bpublisher{American Mathematical Soc.}
\end{bbook}
\endbibitem

\bibitem{baroni2017nonuniversality}
\begin{barticle}[author]
\bauthor{\bsnm{Baroni},~\bfnm{Enrico}\binits{E.}},
  \bauthor{\bsnm{Hofstad},~\bfnm{Remco van~der}\binits{R.~v.~d.}} \AND
  \bauthor{\bsnm{Komj{\'a}thy},~\bfnm{J{\'u}lia}\binits{J.}}
(\byear{2017}).
\btitle{Nonuniversality of weighted random graphs with infinite variance
  degree}.
\bjournal{Journal of Applied Probability}
\bvolume{54}
\bpages{146--164}.
\end{barticle}
\endbibitem

\bibitem{Baroni2019}
\begin{barticle}[author]
\bauthor{\bsnm{Baroni},~\bfnm{Enrico}\binits{E.}},
  \bauthor{\bsnm{Hofstad},~\bfnm{Remco van~der}\binits{R.~v.~d.}} \AND
  \bauthor{\bsnm{Komj{\'{a}}thy},~\bfnm{J{\'{u}}lia}\binits{J.}}
(\byear{2019}).
\btitle{Tight Fluctuations of Weight-Distances in Random Graphs with
  Infinite-Variance Degrees}.
\bjournal{Journal of Statistical Physics}
\bvolume{174}
\bpages{906--934}.
\bdoi{10.1007/s10955-018-2213-8}
\end{barticle}
\endbibitem

\bibitem{berger2005spread}
\begin{binproceedings}[author]
\bauthor{\bsnm{Berger},~\bfnm{Noam}\binits{N.}},
  \bauthor{\bsnm{Borgs},~\bfnm{Christian}\binits{C.}},
  \bauthor{\bsnm{Chayes},~\bfnm{Jennifer~T}\binits{J.~T.}} \AND
  \bauthor{\bsnm{Saberi},~\bfnm{Amin}\binits{A.}}
(\byear{2005}).
\btitle{On the spread of viruses on the internet}.
In \bbooktitle{Proceedings of the sixteenth annual ACM-SIAM symposium on
  Discrete algorithms}
\bpages{301--310}.
\bpublisher{Society for Industrial and Applied Mathematics}.
\end{binproceedings}
\endbibitem

\bibitem{berger2014asymptotic}
\begin{barticle}[author]
\bauthor{\bsnm{Berger},~\bfnm{Noam}\binits{N.}},
  \bauthor{\bsnm{Borgs},~\bfnm{Christian}\binits{C.}},
  \bauthor{\bsnm{Chayes},~\bfnm{Jennifer~T}\binits{J.~T.}} \AND
  \bauthor{\bsnm{Saberi},~\bfnm{Amin}\binits{A.}}
(\byear{2014}).
\btitle{Asymptotic behavior and distributional limits of preferential
  attachment graphs}.
\bjournal{The Annals of Probability}
\bvolume{42}
\bpages{1--40}.
\end{barticle}
\endbibitem

\bibitem{bhamidi2010first}
\begin{barticle}[author]
\bauthor{\bsnm{Bhamidi},~\bfnm{Shankar}\binits{S.}},
  \bauthor{\bsnm{Hofstad},~\bfnm{Remco van~der}\binits{R.~v.~d.}} \AND
  \bauthor{\bsnm{Hooghiemstra},~\bfnm{Gerard}\binits{G.}}
(\byear{2010}).
\btitle{First passage percolation on random graphs with finite mean degrees}.
\bjournal{The Annals of Applied Probability}
\bvolume{20}
\bpages{1907--1965}.
\end{barticle}
\endbibitem

\bibitem{bianconi2001bose}
\begin{barticle}[author]
\bauthor{\bsnm{Bianconi},~\bfnm{Ginestra}\binits{G.}} \AND
  \bauthor{\bsnm{Barab\'asi},~\bfnm{Albert-L\'aszl\'o}\binits{A.-L.}}
(\byear{2001}).
\btitle{Bose-{E}instein Condensation in Complex Networks}.
\bjournal{Phys. Rev. Lett.}
\bvolume{86}
\bpages{5632--5635}.
\bdoi{10.1103/PhysRevLett.86.5632}
\end{barticle}
\endbibitem

\bibitem{bollobas1980countingregulargraphs}
\begin{barticle}[author]
\bauthor{\bsnm{Bollob\'{a}s},~\bfnm{B\'{e}la}\binits{B.}}
(\byear{1980}).
\btitle{A probabilistic proof of an asymptotic formula for the number of
  labelled regular graphs}.
\bjournal{European J. Combin.}
\bvolume{1}
\bpages{311--316}.
\bdoi{10.1016/S0195-6698(80)80030-8}
\bmrnumber{595929}
\end{barticle}
\endbibitem

\bibitem{bollobas2001random}
\begin{bbook}[author]
\bauthor{\bsnm{Bollob\'{a}s},~\bfnm{B\'{e}la}\binits{B.}}
(\byear{2001}).
\btitle{Random graphs},
\bedition{second} ed.
\bseries{Cambridge Studies in Advanced Mathematics}
\bvolume{73}.
\bpublisher{Cambridge {U}niversity {P}ress, Cambridge}.
\bdoi{10.1017/CBO9780511814068}
\bmrnumber{1864966}
\end{bbook}
\endbibitem

\bibitem{bollobas2004diameterpa}
\begin{barticle}[author]
\bauthor{\bsnm{Bollob{\'a}s},~\bfnm{B{\'e}la}\binits{B.}} \AND
  \bauthor{\bsnm{Riordan},~\bfnm{Oliver}\binits{O.}}
(\byear{2004}).
\btitle{The Diameter of a Scale-Free Random Graph}.
\bjournal{Combinatorica}
\bvolume{24}
\bpages{5--34}.
\bdoi{10.1007/s00493-004-0002-2}
\end{barticle}
\endbibitem

\bibitem{can2015metastability}
\begin{barticle}[author]
\bauthor{\bsnm{Can},~\bfnm{Van~Hao}\binits{V.~H.}}
(\byear{2017}).
\btitle{Metastability for the contact process on the preferential attachment
  graph}.
\bjournal{Internet Mathematics}.
\bdoi{https://doi.org/10.24166/im.08.2017}
\end{barticle}
\endbibitem

\bibitem{caravenna2016diameterArxiv}
\begin{barticle}[author]
\bauthor{\bsnm{Caravenna},~\bfnm{Francesco}\binits{F.}},
  \bauthor{\bsnm{Garavaglia},~\bfnm{Alessandro}\binits{A.}} \AND
  \bauthor{\bsnm{Hofstad},~\bfnm{Remco van~der}\binits{R.~v.~d.}}
(\byear{2016}).
\btitle{Diameter in ultra-small scale-free random graphs: Extended version}.
\bjournal{Preprint arXiv:1605.02714}.
\end{barticle}
\endbibitem

\bibitem{caravenna2016diameter}
\begin{barticle}[author]
\bauthor{\bsnm{Caravenna},~\bfnm{Francesco}\binits{F.}},
  \bauthor{\bsnm{Garavaglia},~\bfnm{Alessandro}\binits{A.}} \AND
  \bauthor{\bsnm{Hofstad},~\bfnm{Remco van~der}\binits{R.~v.~d.}}
(\byear{2019}).
\btitle{{Diameter in ultra-small scale-free random graphs}}.
\bjournal{Random Structures {\&} Algorithms}
\bvolume{54}
\bpages{444--498}.
\bdoi{10.1002/rsa.20798}
\end{barticle}
\endbibitem

\bibitem{chung2002average}
\begin{barticle}[author]
\bauthor{\bsnm{Chung},~\bfnm{Fan}\binits{F.}} \AND
  \bauthor{\bsnm{Lu},~\bfnm{Linyuan}\binits{L.}}
(\byear{2002}).
\btitle{The average distances in random graphs with given expected degrees}.
\bjournal{Proceedings of the National Academy of Sciences}
\bvolume{99}
\bpages{15879--15882}.
\end{barticle}
\endbibitem

\bibitem{cipriani2019dynamical}
\begin{barticle}[author]
\bauthor{\bsnm{Cipriani},~\bfnm{Alessandra}\binits{A.}} \AND
  \bauthor{\bsnm{Fontanari},~\bfnm{Andrea}\binits{A.}}
(\byear{2019}).
\btitle{Dynamical fitness models: evidence of universality classes for
  preferential attachment graphs}.
\bjournal{Preprint arXiv:1911.12402}.
\end{barticle}
\endbibitem

\bibitem{cooper2004random}
\begin{barticle}[author]
\bauthor{\bsnm{Cooper},~\bfnm{Colin}\binits{C.}},
  \bauthor{\bsnm{Frieze},~\bfnm{Alan}\binits{A.}} \AND
  \bauthor{\bsnm{Vera},~\bfnm{Juan}\binits{J.}}
(\byear{2004}).
\btitle{Random deletion in a scale-free random graph process}.
\bjournal{Internet Mathematics}
\bvolume{1}
\bpages{463--483}.
\end{barticle}
\endbibitem

\bibitem{deijfen2009growing}
\begin{barticle}[author]
\bauthor{\bsnm{Deijfen},~\bfnm{Maria}\binits{M.}} \AND
  \bauthor{\bsnm{Lindholm},~\bfnm{Mathias}\binits{M.}}
(\byear{2009}).
\btitle{Growing networks with preferential deletion and addition of edges}.
\bjournal{Physica A: Statistical Mechanics and its Applications}
\bvolume{388}
\bpages{4297--4303}.
\end{barticle}
\endbibitem

\bibitem{dereichMaillerMortersCondens2017}
\begin{barticle}[author]
\bauthor{\bsnm{Dereich},~\bfnm{Steffen}\binits{S.}},
  \bauthor{\bsnm{Mailler},~\bfnm{C\'{e}cile}\binits{C.}} \AND
  \bauthor{\bsnm{M\"{o}rters},~\bfnm{Peter}\binits{P.}}
(\byear{2017}).
\btitle{Nonextensive condensation in reinforced branching processes}.
\bjournal{Ann. Appl. Probab.}
\bvolume{27}
\bpages{2539--2568}.
\bdoi{10.1214/16-AAP1268}
\bmrnumber{3693533}
\end{barticle}
\endbibitem

\bibitem{dereich2012typical}
\begin{barticle}[author]
\bauthor{\bsnm{Dereich},~\bfnm{Steffen}\binits{S.}},
  \bauthor{\bsnm{M{\"o}nch},~\bfnm{Christian}\binits{C.}} \AND
  \bauthor{\bsnm{M{\"o}rters},~\bfnm{Peter}\binits{P.}}
(\byear{2012}).
\btitle{Typical distances in ultrasmall random networks}.
\bjournal{Advances in Applied Probability}
\bvolume{44}
\bpages{583--601}.
\end{barticle}
\endbibitem

\bibitem{dereich2009random}
\begin{barticle}[author]
\bauthor{\bsnm{Dereich},~\bfnm{Steffen}\binits{S.}} \AND
  \bauthor{\bsnm{M{\"o}rters},~\bfnm{Peter}\binits{P.}}
(\byear{2009}).
\btitle{Random networks with sublinear preferential attachment: degree
  evolutions}.
\bjournal{Electronic Journal of Probability}
\bvolume{14}
\bpages{1222--1267}.
\end{barticle}
\endbibitem

\bibitem{dereich2013random}
\begin{barticle}[author]
\bauthor{\bsnm{Dereich},~\bfnm{Steffen}\binits{S.}} \AND
  \bauthor{\bsnm{M{\"o}rters},~\bfnm{Peter}\binits{P.}}
(\byear{2013}).
\btitle{Random networks with sublinear preferential attachment: the giant
  component}.
\bjournal{The Annals of Probability}
\bvolume{41}
\bpages{329--384}.
\end{barticle}
\endbibitem

\bibitem{dereichOrtgieseCondens2014}
\begin{barticle}[author]
\bauthor{\bsnm{Dereich},~\bfnm{Steffen}\binits{S.}} \AND
  \bauthor{\bsnm{Ortgiese},~\bfnm{Marcel}\binits{M.}}
(\byear{2014}).
\btitle{Robust analysis of preferential attachment models with fitness}.
\bjournal{Combin. Probab. Comput.}
\bvolume{23}
\bpages{386--411}.
\bdoi{10.1017/S0963548314000157}
\bmrnumber{3189418}
\end{barticle}
\endbibitem

\bibitem{dommers2010diameters}
\begin{barticle}[author]
\bauthor{\bsnm{Dommers},~\bfnm{Sander}\binits{S.}},
  \bauthor{\bsnm{Hofstad},~\bfnm{Remco van~der}\binits{R.~v.~d.}} \AND
  \bauthor{\bsnm{Hooghiemstra},~\bfnm{Gerard}\binits{G.}}
(\byear{2010}).
\btitle{Diameters in preferential attachment models}.
\bjournal{Journal of Statistical Physics}
\bvolume{139}
\bpages{72--107}.
\end{barticle}
\endbibitem

\bibitem{eckhoff2014vulnerability}
\begin{barticle}[author]
\bauthor{\bsnm{Eckhoff},~\bfnm{Maren}\binits{M.}} \AND
  \bauthor{\bsnm{M{\"o}rters},~\bfnm{Peter}\binits{P.}}
(\byear{2014}).
\btitle{Vulnerability of robust preferential attachment networks}.
\bjournal{Electronic Journal of Probability}
\bvolume{19}.
\end{barticle}
\endbibitem

\bibitem{faloutsos1999internet}
\begin{barticle}[author]
\bauthor{\bsnm{Faloutsos},~\bfnm{Michalis}\binits{M.}},
  \bauthor{\bsnm{Faloutsos},~\bfnm{Petros}\binits{P.}} \AND
  \bauthor{\bsnm{Faloutsos},~\bfnm{Christos}\binits{C.}}
(\byear{1999}).
\btitle{On Power-Law Relationships of the Internet Topology}.
\bjournal{SIGCOMM Comput. Commun. Rev.}
\bvolume{29}
\bpages{251–262}.
\bdoi{10.1145/316194.316229}
\end{barticle}
\endbibitem

\bibitem{freeman2018extensive}
\begin{barticle}[author]
\bauthor{\bsnm{Freeman},~\bfnm{Nic}\binits{N.}},
  \bauthor{\bsnm{Jordan},~\bfnm{Jonathan}\binits{J.}} \betal{et~al.}
(\byear{2020}).
\btitle{Extensive condensation in a model of preferential attachment with
  fitness}.
\bjournal{Electronic Journal of Probability}
\bvolume{25}.
\end{barticle}
\endbibitem

\bibitem{gracar2018age}
\begin{barticle}[author]
\bauthor{\bsnm{Gracar},~\bfnm{Peter}\binits{P.}},
  \bauthor{\bsnm{Grauer},~\bfnm{Arne}\binits{A.}},
  \bauthor{\bsnm{L\"{u}chtrath},~\bfnm{Lukas}\binits{L.}} \AND
  \bauthor{\bsnm{M\"{o}rters},~\bfnm{Peter}\binits{P.}}
(\byear{2019}).
\btitle{The age-dependent random connection model}.
\bjournal{Queueing Syst.}
\bvolume{93}
\bpages{309--331}.
\bdoi{10.1007/s11134-019-09625-y}
\bmrnumber{4032928}
\end{barticle}
\endbibitem

\bibitem{gracarluchtrath2020robustness}
\begin{barticle}[author]
\bauthor{\bsnm{Gracar},~\bfnm{Pete}\binits{P.}},
  \bauthor{\bsnm{L\"uchtrath},~\bfnm{Lukas}\binits{L.}} \AND
  \bauthor{\bsnm{M\"{o}rters},~\bfnm{Peter}\binits{P.}}
(\byear{2020}).
\btitle{Percolation phase transition in weight-dependent random connection
  models}.
\bjournal{Preprint arXiv:2003.04040}.
\end{barticle}
\endbibitem

\bibitem{hirsch2018distances_spatial_pa}
\begin{barticle}[author]
\bauthor{\bsnm{Hirsch},~\bfnm{Christian}\binits{C.}} \AND
  \bauthor{\bsnm{M{\"o}nch},~\bfnm{Christian}\binits{C.}}
(\byear{2020}).
\btitle{Distances and large deviations in the spatial preferential attachment
  model}.
\bjournal{Bernoulli}
\bvolume{26}
\bpages{927--947}.
\end{barticle}
\endbibitem

\bibitem{hofstad2016book1}
\begin{bbook}[author]
\bauthor{\bsnm{Hofstad},~\bfnm{Remco van~der}\binits{R.~v.~d.}}
(\byear{2016}).
\btitle{Random graphs and complex networks, {V}olume 1}.
\bpublisher{Cambridge {U}niversity {P}ress}.
\end{bbook}
\endbibitem

\bibitem{hofstad2017stochasticprocesses}
\begin{bmisc}[author]
\bauthor{\bsnm{Hofstad},~\bfnm{Remco van~der}\binits{R.~v.~d.}}
(\byear{2017}).
\btitle{Stochastic processes on random graphs}.
\bnote{\url{https://www.win.tue.nl/~rhofstad/SaintFlour_SPoRG.pdf}}.
\end{bmisc}
\endbibitem

\bibitem{hofstad2016book2}
\begin{bmisc}[author]
\bauthor{\bsnm{Hofstad},~\bfnm{Remco van~der}\binits{R.~v.~d.}}
(\byear{2020}+).
\btitle{Random graphs and complex networks, {V}olume 2}.
\bnote{\url{https://www.win.tue.nl/~rhofstad/NotesRGCNII.pdf}}.
\end{bmisc}
\endbibitem

\bibitem{jacob2015spatial}
\begin{barticle}[author]
\bauthor{\bsnm{Jacob},~\bfnm{Emmanuel}\binits{E.}} \AND
  \bauthor{\bsnm{M{\"o}rters},~\bfnm{Peter}\binits{P.}}
(\byear{2015}).
\btitle{Spatial preferential attachment networks: Power laws and clustering
  coefficients}.
\bjournal{The Annals of Applied Probability}
\bvolume{25}
\bpages{632--662}.
\end{barticle}
\endbibitem

\bibitem{jacob2017robustness}
\begin{barticle}[author]
\bauthor{\bsnm{Jacob},~\bfnm{Emmanuel}\binits{E.}} \AND
  \bauthor{\bsnm{M{\"o}rters},~\bfnm{Peter}\binits{P.}}
(\byear{2017}).
\btitle{Robustness of scale-free spatial networks}.
\bjournal{The Annals of Probability}
\bvolume{45}
\bpages{1680--1722}.
\end{barticle}
\endbibitem

\bibitem{janson2019preferential}
\begin{barticle}[author]
\bauthor{\bsnm{Janson},~\bfnm{Svante}\binits{S.}} \AND
  \bauthor{\bsnm{Warnke},~\bfnm{Lutz}\binits{L.}}
(\byear{2021}).
\btitle{Preferential attachment without vertex growth: Emergence of the giant
  component}.
\bjournal{The Annals of Applied Probability}
\bvolume{31}
\bpages{1523--1547}.
\end{barticle}
\endbibitem

\bibitem{jorkom2019weighted}
\begin{barticle}[author]
\bauthor{\bsnm{Jorritsma},~\bfnm{Joost}\binits{J.}} \AND
  \bauthor{\bsnm{Komjáthy},~\bfnm{Júlia}\binits{J.}}
(\byear{2020}).
\btitle{Weighted distances in scale-free preferential attachment models}.
\bjournal{Random Structures \& Algorithms}
\bvolume{57}
\bpages{823-859}.
\bdoi{https://doi.org/10.1002/rsa.20947}
\end{barticle}
\endbibitem

\bibitem{malyshkin2014powerofchoic}
\begin{barticle}[author]
\bauthor{\bsnm{Malyshkin},~\bfnm{Yury}\binits{Y.}} \AND
  \bauthor{\bsnm{Paquette},~\bfnm{Elliot}\binits{E.}}
(\byear{2014}).
\btitle{The power of choice combined with preferential attachment}.
\bjournal{Electron. Commun. Probab.}
\bvolume{19}
\bpages{1--13}.
\bdoi{10.1214/ECP.v19-3461}
\bmrnumber{3233206}
\end{barticle}
\endbibitem

\bibitem{molloyreed1995randomgraphswithgivendegree}
\begin{binproceedings}[author]
\bauthor{\bsnm{Molloy},~\bfnm{Michael}\binits{M.}} \AND
  \bauthor{\bsnm{Reed},~\bfnm{Bruce}\binits{B.}}
(\byear{1995}).
\btitle{A critical point for random graphs with a given degree sequence}.
In \bbooktitle{Proceedings of the {S}ixth {I}nternational {S}eminar on {R}andom
  {G}raphs and {P}robabilistic {M}ethods in {C}ombinatorics and {C}omputer
  {S}cience, ``{R}andom {G}raphs '93'' ({P}ozna\'{n}, 1993)}
\bvolume{6}
\bpages{161--179}.
\bdoi{10.1002/rsa.3240060204}
\bmrnumber{1370952}
\end{binproceedings}
\endbibitem

\bibitem{monch2013distances}
\begin{bphdthesis}[author]
\bauthor{\bsnm{M{\"o}nch},~\bfnm{Christian}\binits{C.}}
(\byear{2013}).
\btitle{Distances in preferential attachment networks},
\btype{PhD thesis},
\bpublisher{University of Bath}.
\end{bphdthesis}
\endbibitem

\bibitem{mori2002}
\begin{barticle}[author]
\bauthor{\bsnm{M\'{o}ri},~\bfnm{T.~F.}\binits{T.~F.}}
(\byear{2002}).
\btitle{On random trees}.
\bjournal{Studia Sci. Math. Hungar.}
\bvolume{39}
\bpages{143--155}.
\bdoi{10.1556/SScMath.39.2002.1-2.9}
\bmrnumber{1909153}
\end{barticle}
\endbibitem

\bibitem{norros2006conditionally}
\begin{barticle}[author]
\bauthor{\bsnm{Norros},~\bfnm{Ilkka}\binits{I.}} \AND
  \bauthor{\bsnm{Reittu},~\bfnm{Hannu}\binits{H.}}
(\byear{2006}).
\btitle{On a conditionally {P}oissonian graph process}.
\bjournal{Advances in Applied Probability}
\bvolume{38}
\bpages{59--75}.
\end{barticle}
\endbibitem

\bibitem{pittel2010random}
\begin{barticle}[author]
\bauthor{\bsnm{Pittel},~\bfnm{Boris}\binits{B.}}
(\byear{2010}).
\btitle{On a random graph evolving by degrees}.
\bjournal{Advances in Mathematics}
\bvolume{223}
\bpages{619--671}.
\end{barticle}
\endbibitem

\end{thebibliography}

\end{document}